\newcommand{\R}{\textnormal{I\kern-0.21emR}}
\newcommand{\N}{\textnormal{I\kern-0.21emN}}
\renewcommand{\geq}{\geqslant}
\renewcommand{\leq}{\leqslant}
\def\B{{\mathbb B}}
\def\e{{\varepsilon}}
\def\doubleunderline#1{\underline{\underline{#1}}}
\def\YYint#1#2#3{{\setbox0=\hbox{$#1{#2#3}{\iint}$}
    \vcenter{\hbox{$#2#3$}}\kern-.51\wd0}}
\newtheorem{theorem}{Theorem}
\newtheorem{material}{material}
\newtheorem{proposition}[material]{Proposition}
\newtheorem{definition}[material]{Definition}
\newtheorem{lemma}[material]{Lemma}
\theoremstyle{definition}\newtheorem{remark}[material]{Remark}
\def\O{{\Omega}}
\def\n{{\nabla}}
\def\p{{\varphi}}
 \newcommandx{\unsure}[2][1=]{\todo[linecolor=red,backgroundcolor=red!25,bordercolor=red,#1]{#2}}
 \newcommandx{\change}[2][1=]{\todo[linecolor=blue,backgroundcolor=blue!25,bordercolor=blue,#1]{#2}}
 \newcommandx{\info}[2][1=]{\todo[linecolor=green,backgroundcolor=green!25,bordercolor=green,#1]{#2}}
 \newcommandx{\improvement}[2][1=]{\todo[linecolor=yellow,backgroundcolor=yellow!25,bordercolor=yellow,#1]{#2}}
  \newcommandx{\biblio}[2][1=]{\todo[linecolor=blue,backgroundcolor=magenta!25,bordercolor=blue,#1]{#2}}
\begin{document}
\title{Quantitative stability for eigenvalues of Schr\"{o}dinger operator, Quantitative bathtub principle\\ \&\\Application to the turnpike property for a bilinear optimal control problem}


\author{Idriss Mazari\footnote{Technische Universit\"{a}t Wien, Institute of Analysis and Scientific Computing, 8-10 Wiedner Haupstrasse, 1040 Wien (\texttt{idriss.mazari@tuwien.ac.at})},  \quad Dom\`enec Ruiz-Balet\footnote{Chair of Computational Mathematics, Fundaci\'on Deusto, Av. de las Universidades, 24,  48007 Bilbao, Basque Country, Spain} \footnote{ Departamento de Matem\'eticas, Universidad Aut\'onoma de Madrid, 28049 Madrid, Spain, (\texttt{domenec.ruiz@deusto.es })}}
\date{\today}

\maketitle

\begin{abstract}
This work is concerned with two optimisation problems that we tackle from a qualitative perspective. The first one deals with quantitative inequalities for spectral optimisation problems for Schr\"{o}dinger operators in general domains, the second one deals with the turnpike property for optimal bilinear control problems. In the first part of this article, we prove, under mild technical assumptions, quantitative inequalities for the optimisation of the first eigenvalue of $-\Delta-V$ with Dirichlet boundary conditions with respect to the potential $V$, under $L^\infty$ and $L^1$ constraints. This is done using a new method of proof which relies on in a crucial way on a quantitative bathtub principle. We believe our approach susceptible of being generalised to other steady elliptic optimisation problems. In the second part of this paper, we use this inequality to tackle a turnpike problem. Namely, considering a bilinear control system of the form $u_t-\Delta u=\mathcal V u$, $\mathcal V=\mathcal V(t,x)$ being the control, can we give qualitative information, under  $L^\infty$ and $L^1$ constraints on $\mathcal V$, on the solutions of the optimisation problem $\sup \int_\O u(T,x)dx$? We prove that the quantitative inequality for eigenvalues implies an integral turnpike property: defining $\mathcal I^*$ as the set of optimal potentials for the eigenvalue optimisation problem and $\mathcal V_T^*$ as a solution of the bilinear optimal control problem, the quantity $\int_0^T \operatorname{dist}_{L^1}(\mathcal V_T^*(t,\cdot)\,, \mathcal I^*)^2$ is bounded uniformly in $T$.

\end{abstract}

\noindent\textbf{Keywords:} Optimal control of PDEs, Shape optimization, Shape derivatives, Quantitative inequalities, Turnpike property, Spectral optimization.

\medskip

\noindent\textbf{AMS classification:} 49J15, 49Q10.

\paragraph{Acknowledgment.}

I. Mazari was  supported by the French ANR Project ANR-18-CE40-0013 - SHAPO on Shape Optimization and by the Austrian Science Fund (FWF) through the grant I4052-N32 .

D. Ruiz-Balet was  supported by the European Research Council (ERC) under the European Union's Horizon 2020 research and innovation programme (grant agreement No. 694126-DyCon).

This project has received funding from the European Union's Horizon 2020 research and innovation programme under the Marie Sklodowska-Curie grant agreement No.765579-ConFlex, grant MTM2017-92996 of MINECO (Spain), ICON of the French ANR and "Nonlocal PDEs: Analysis, Control and Beyond", AFOSR Grant FA9550-18-1-0242 and the Alexander von Humboldt-Professorship program.


\section{Introduction}
\subsection{General setting and structure of the article}

The overall objective of this article is to establish a link between two key classes of results of optimisation and optimal control theory: \emph{quantitative estimates for optimal control problems} and the \emph{turnpike-phenomenon}. We will establish our results in the context of \emph{bilinear control problems}. We briefly present the main protagonists of the article, before laying out the structure of this paper.

\paragraph{Quantitative estimates}
For a function $J=J(u)$, where $u=u(x)$ is the (stationary) control, which has, under some contraints, a certain minimiser $u^*$,  quantitative estimates roughly amount to establishing an inequality of the form 
$$J(u)-J(u^*)\geq c ||u-u^*||_X^\alpha$$ for a certain constant $c>0$, a certain norm $X$ and a certain exponent $\alpha$. In the context of this article, since we will enforce $L^\infty$ and $L^1$ constraints on the control variable $u$, we will prove that such inequalities are linked to quantitative inequalities for shape optimisations problems, which has been a tremendously active field of research; we point, for the time being, to the seminal \cite{FuscoMaggiPratelli}. In our case, notable differences with the " shape optimisation " context will arise and call for new methods.

In this article, the spectral optimisation problem considered reads as follows: considering, for a certain potential $V=V(x)$, the first Dirichlet eigenvalue $\lambda(V)$ of 
$$-\Delta -V$$ solve the optimisation problem
\begin{equation}\label{Eq0}\tag{$\bold O_I$}\inf_{V\text{ satisfying some $L^1$ and $L^\infty$ constraints}} \lambda(V)=:\overline \lambda\end{equation} and establish a quantitative inequality. This was done, in the two-dimensional case and when the underlying domain is a ball, in \cite{MazariQuantitative}.

\paragraph{The turnpike-phenomenon}
The turnpike-phenomenon, on the other hand, deals with time-evolving control systems. Given a time horizon $T>0$ and a cost function $J_T=J_T(v)$ where $v=v(t,x)$ is the (time-evolving) control, such that $J_T$ has, under certain constraints, a minimiser $v^*$, then the turnpike principle states that $v^*$ should be close to the minimiser $u^*=u^*(x)$, under the same kind of constraints, of a stationary optimal control problem associated with a functional $\overline J$. Here closeness can be understood in different ways. The two most classical definitions are the exponential-turnpike \cite{TrelatZhang}, which gives a bound of the form
$$\forall t\in (0;T)\,, \Vert v^*(t,\cdot)-u^*(\cdot)\Vert_X\lesssim e^{-t}+e^{-(T-t)},$$ for a certain norm $X$, or the integral-turnpike \cite{LanceTrelatZuazua}, which is the version we will use in the main results and which gives estimates of the form
$$\int_0^T \Vert v^*(t,\cdot)-u^*(\cdot)\Vert_X^\alpha dt\leq M\text{ independent of }T,$$
for some norm $X$ and some exponent $\alpha>0$.  Over the recent years, this \emph{turnpike-phenomenon} has acquired a great importance in control theory, as exemplified by the numerous papers devoted to it. It has first been investigated in the context of econometry \cite{Dorfman}, but has since found applications in many fields and has been established in a variety of context. For a general introduction to this phenomenon, we refer to \cite{2006,Carlson1991} and we point, for recent results, to \cite{AftalionTrelat,LanceTrelatZuazua,PighinSakamoto,TrelatZhang,TrelatZhangZuazua,Zuazua2017}. We also mention \cite{borjan} for applications of the turnpike to machine learning. In the context of partial differential equations however, this property has only been investigated in the context of \emph{linear controls} \cite{LanceTrelatZuazua,Zuazua2017}, in which case the first step is to use the characterization of optimal controls, which are then finely analysed to prove that either at least one optimal control satisfying the turnpike property existst, or that all optimal controls satisfy it. It should also be noted that, apart from \cite{LanceTrelatZuazua}, most functionals for which the turnpike property is established contain a so-called tracking term, i.e. a term of the form
$\int_0^T \Vert y(t,\cdot)-\overline y(\cdot)\Vert_X^\alpha$ where $y$ is the state of the control problem and $\overline y$ is a stationary reference configuration.

In sharp contrast with these previous contributions, our work is, to the best of our knowledge, the first to address this question for \emph{bilinear optimal control problems}, and our methods do not rely on the characterization of optimal controls. Much rather, we will first develop specific tools to attack the question of stability estimates for a Schr\"{o}dinger operator and show how this directly implies an integral turnpike property, for an optimal control problem whose solution can not be characterized except in very particular geometries.

We believe that our techniques will serve to establish that quantitative estimates for optimal control problems, which are less developed than their optimal shapes counterparts, can be used to obtain qualitative information about the behaviour of solutions of intricate optimal control problems. 

\paragraph{Bilinear optimal control problems}

Although bilinear controllability is a very active field \cite{Alabau,Floridia}, the literature devoted to optimal bilinear control problems is rather scarce. We mention  \cite{Fister,GuillnGonzlez2020}, where bilinear optimal control problems for chemotaxis or chemorepulsion models are investigated in the context of systems. In these papers, the cost functional is of tracking type and the emphasis is put on existence properties as well as on derivation of optimality conditions. In \cite{Borzi}, an optimal control problem for bilinear controls is considered from the optimality conditions point of view. Their cost functional is of tracking type and special emphasis is put on the multigrid numerical analysis of the optimisation problem. Our paper is to the best of our knowledge the first contribution to the qualitative analysis of such problems. Explicit computations may allow one to obtain the optimal controls in very specific geometries \cite{Alvino1990} (see Remark \ref{Rem:ATL} below) but it is in general hopeless to get such characterization, so that proving turnpike for these problems provides a first valuable information.

In this article, the type of optimal control problems considered reads as follows: considering, in a domain $\O$, an initial datum $u_0\in L^\infty\,, \geq 0\,, u_0\neq 0$, a final time $T>0$ and a control potential $\mathcal V=\mathcal V(t,x)$ satisfying \emph{$L^1$ and $L^\infty$ constraints}, solve
\begin{equation}\label{Eq02}\tag{$\bold O_{II}$}\sup_{\mathcal V \text{ satisfying some $L^1$ and $L^\infty$ constraints}}\int_\O u_\mathcal V(T,\cdot)\,, \text{ subject to }
\begin{cases}\partial_t u_\mathcal V-\Delta u_\mathcal V-\mathcal Vu_\mathcal V=0\,, \text{ }t\geq 0\,, x \in \O\,, \\ u_\mathcal V(t,\cdot)=0\text{ on }\partial \O \,, \\ u_\mathcal V(0,\cdot)=u_0\text{ in }\O.
\end{cases}\end{equation}
 
We will prove that integral turnpike holds for \eqref{Eq02} and, more precisely, that the set of turnpike controls (i.e. the stationary controls the time-dependent optimal controls should stay close to) is the set of solutions of the spectral optimisation problem \eqref{Eq0} under the same $L^1$ and $L^\infty$ constraints.

\paragraph{Structure of the article}
\begin{itemize}
\item Subsection \ref{Su:Main} contains a presentation of both the spectral optimisation problem and of the bilinear optimal control problem as well as the statement of the main results. Theorem \ref{Th:Quanti} deals with the quantitative inequality, Theorem \ref{Th:Turnpike} with the turnpike phenomenon.
\item Section \ref{Se:Quanti}, which takes up most of the paper, is devoted to the proof of Theorem \ref{Th:Quanti}.
\item Section \ref{Se:Turnpike} is devoted to the proof of Theorem \ref{Th:Turnpike}.
\item In the Appendices, we gather some technical proofs.
\end{itemize}

\subsection{Presentation of the problems and main results}\label{Su:Main}

\subsubsection{The spectral optimisation problem \& the quantitative inequality}

\paragraph{The optimisation problem}

To state the optimisation problem and the quantitative inequality, let us consider a domain $\O$ with a $\mathscr C^3$ boundary.

Let us define, for any function $V \in L^\infty(\O)$, $\lambda(V)$ as the first eigenvalue of the operator 
$$ L_V:=-\Delta -V$$ with Dirichlet boundary conditions. 

One can define $\lambda(V)$ using the Rayleigh quotient formulation
\begin{equation}\label{Eq:Rayleigh}
\lambda(V):=\inf_{u \in W^{1,2}_0(\O)\,, u\neq 0}\frac{\displaystyle \int_\O |\n u|^2-\int_\O V u^2}{\displaystyle \int_\O  u^2}.\end{equation}

It is classical to see that this eigenvalue is simple and that any eigenfunction associated with this eigenvalue has constant sign. We hence define, for any $V\in L^\infty(\O)$,  $u_V$ as the unique eigenfunction associated with $\lambda(V)$ satisfying 
\begin{enumerate}
\item $u_V>0$ in $\O$, $u_V=0$ on $\partial \O$,
\item $\int_\O u_V^2=1.$
\end{enumerate}
In particular, $u_V$ satisfies 
\begin{equation}\label{Eq:Eig}
\begin{cases}
-\Delta u_V-Vu_V=\lambda(V)u_V\text{ in }\O\,,
\\ u_V=0\text{ on }\partial \O.\end{cases}\end{equation}
Let us fix a parameter $V_0\in (0;|\O|)$ and define the admissible class
\begin{equation}\label{Eq:Adm}\tag{$\bold{Adm}$}\mathcal M:=\left\{0\leq V \leq 1\,, \int_\O V=V_0\right\}.\end{equation} 
The optimisation problem is then 
\begin{equation}\label{Eq:E1}\tag{$\bold{P}_\lambda$}\fbox{$\displaystyle \overline \lambda:=\inf_{V\in \mathcal M}\lambda(V).$}\end{equation}

It is easy to show that this minimisation problem has at least a solution $V^*$ using the direct method of calculus of variations, see \cite{LamboleyLaurainNadinPrivat}. This optimisation problem is intimately linked to the question of survival of species in classical models of mathematical biology \cite{BHR,CantrellCosner1}. In that case, the potential $V$ model the distribution of resources accessible to a particular species, and the sign of $\lambda(V)$ governs the long-time behaviour of the classical logistic-diffusive model. In that \textquotedblleft resources distribution" interpretation, \eqref{Eq:E1} amounts to solving the following problem: what is the best way to spread resources in a domain to ensure survival of a species? For more references on the application of \eqref{Eq:E1} to mathematical biology problems, we refer to the introduction of \cite{MazariThese}. This problem also has many applications in the mathematical modelling of composite membrane \cite{Cox1991,Cox1990} and can exhibit a variety of behaviours depending on the parameters \cite{Chanillo2000}.

The following question, which deals with the stability of a minimiser $V^*$, is natural in shape optimisation:
\begin{equation}\tag{$\bold Q_0$}\label{Q0}
\text{\emph{Can we estimate the remainder $\lambda(V)-\lambda(V^*)$ from below using $\Vert V-V^*\Vert_{L^1}$?} }\end{equation}

Obviously, since uniqueness may not hold for \eqref{Eq:E1}, for this question to be relevant, one needs to define 
the set of minimisers
\begin{equation}\tag{$\bold{Opt}$}\mathcal I^*:=\left\{V^*\text{  solution of \eqref{Eq:E1}}\right\},\end{equation}
and  \eqref{Q0} is then reformulated as 

\begin{equation}\tag{$\bold Q_1$}\label{Q1}\text{
\emph{Can we estimate the remainder $\lambda(V)-\overline \lambda$ from below using $\operatorname{dist}_{L^1}\left(V,\mathcal I^*\right)$?}} \end{equation}

\paragraph{Assumptions on the domain $\O$}
To answer \eqref{Q1}, one needs two technical assumptions on the domain $\O$ itself. More specifically, it is possible to prove \cite{KaoLouYanagida,LamboleyLaurainNadinPrivat} that any solution $V^*$ of \eqref{Eq:E1} is a bang-bang function i.e. that there exists a measurable subset $E^*\subset \O$ such that
$$\operatorname{Vol}(E^*)=V_0\,, V^*=\mathds 1_{E^*}.$$ Any subset $E^*\subset \O$ such that $\mathds 1_{E^*}\in \mathcal M$ and $\mathds 1_{E^*}$ solves \eqref{Eq:E1} is called an \textit{optimal spectral set}. This leads to introducing the set of optimal spectral sets 
\begin{equation}\tag{$\bold{Opt_{s}}$}\mathcal E^*:=\left\{E\subset \O\,, \text{ $\mathds 1_E\in \mathcal M$ and $\mathds 1_E$ solves \eqref{Eq:E1}}\right\}.\end{equation} Since every solution of \eqref{Eq:E1} is a bang-bang function, there is a bijection between $\mathcal E^*$ and $\mathcal I^*$.
 
The proof of the quantitative inequality will involve at some point \textit{shape derivatives at optimal spectral sets}. In order to apply it one needs to be able to compute second order shape derivatives, and  we are thus led to assume the following:
\begin{gather*}
\label{A1}\tag{$\bold A_1$}
\text{ Any optimal spectral set $E^*$  of $\O$ has a $\mathscr C^3$ boundary.}
\\\text{ If $H_{E^*}$ is the mean curvature of $\partial E^*$, 
$H_\infty:=\sup_{E^*\in \mathcal E^*}\Vert H_{E^*}\Vert_\infty<+\infty.$}
\\\text{ Finally, if $\operatorname{Per}(E^*)$ is the Cacciopoli perimeter of $E^*$, $\operatorname{Per}_\infty:=\sup_{E^*\in \mathcal E^*} \operatorname{Per}({E^*})<+\infty.$}
\end{gather*}
This $\mathscr C^3$ assumption is standard in that context \cite{DambrineLamboley}.

\begin{remark}[Regarding the $\mathscr C^3$ assumption] \textit{It would be interesting to see whether or not it could be possible to lower these regularity assumptions and obtain quantitative inequalities using, for instance, recent structure results for shape derivatives at Lipschitz sets \cite{Laurain2020}. We however expect several technical difficulties.}
\end{remark}
\begin{remark}[Some domains $\O$ satisfying \eqref{A1}]
\emph{The main regularity results for \eqref{Eq:E1} can be found in \cite{Chanillo2008}, where it is proved that, when $d=2$ and $\O$ is Lipschitz regular, then the boundary of any optimal spectral set $E^*\in \mathcal E^*$ consists of finitely many disjoints analytic curves. In higher dimensions, the boundary of  any optimal spectral set is known to be smooth up to a closed set with Hausdorff dimension at most equal to $d-1$ \cite{Chanillo2008bis}. The main focus of this paper is however not the regularity properties of optimal spectral sets, and we thus choose to work under such "simplifying" assumptions.}
\end{remark}

Once \eqref{A1} is satisfied, it is possible to state our \emph{non-degeneracy assumption}. Indeed, as is customary in shape optimisation \cite{DambrineLamboley}, one always needs to assume some kind of coercivity on second order shape-derivatives. In order to state this assumption, let us briefly recall some definitions of shape derivatives:

\begin{definition}\label{De:ShapeStab}

Let $\mathcal F:E\mapsto \mathcal F(E)\in \R$ be a shape functional. We define, for $E^*\in \mathcal E^*$,
$$\mathcal X_1(E^*):=\left\{\Phi\in W^{3,\infty}_c(\O;\R^n),  \forall t \in(-1;1)\, , (\operatorname{Id}+t\Phi)(E^*)\subset \O.\right\}$$as the set of admissible perturbations at $E^*$.
The shape derivative of first (resp. second) order of a shape functional $\mathcal F$ at $E^*$ in the direction $\Phi$ is 
\begin{multline}
\mathcal F'(E^*)[\Phi]=\lim_{t\to 0}\frac{\mathcal F\Big((Id+t\Phi)E^*\Big)-\mathcal F(E^*)}t
\\\text{ (resp.}\mathcal F''(E^*)[\Phi,\Phi]:=\lim_{t^2\to 0}\frac{\mathcal F\Big((Id+t\Phi)E^*\Big)-\mathcal F(E^*)-\mathcal F'(E^*)(\Phi)}{t^2}.\text{)}\end{multline} provided it exists.

We say that $E^*$ satisfies first (resp. second) order  shape optimality condition for $\mathcal F$ if, for any $\Phi\in \mathcal X_1(E^*)$,  $\mathcal F'(E^*)[\Phi]$ is well defined and $\mathcal F'(E^*)[\Phi]=0$ (resp. both $\mathcal F'(E^*)[\Phi]$ and $\mathcal F''(E^*)[\Phi,\Phi]$ are well defined and $\mathcal F'(E^*)[\Phi]=0$, $\mathcal F''(E^*)[\Phi,\Phi]>0$). 
\end{definition}

Using that definition, we can now define a notion of shape optimality for \eqref{Eq:E1}. Namely, with a slight abuse of notation we can define, for any measurable subset $E\subset \O$, $$\lambda(E):=\lambda(\mathds 1_E)$$ and consider for any real parameter $\Lambda\in \R$, the Lagrangian
\begin{equation}\label{Eq:Lagrangian}\mathcal L_\Lambda(E):=\lambda(E)-\Lambda \operatorname{Vol}(E).\end{equation} This Lagrangian is used to handle the volume constraint. At any optimal spectral set $E^*\in \mathcal E^*$, the theory of Lagrange multipliers proves that there exists a Lagrange multiplier (that we can compute, see Remark \ref{Re:LagrangeMultiplier}) $\Lambda(E^*)$ such that, for any $\Phi \in \mathcal X_1^*(E^*)$, 
\begin{equation}\mathcal L_{\Lambda(E^*)}'(E^*)[\Phi]=0\,, \mathcal L_{\Lambda(E^*)}''(E^*)[\Phi,\Phi]\geq 0.\end{equation} For the sake of readability, we write 
$$L_{E^*}:=\mathcal L_{\Lambda(E^*)}.$$ The second Assumption we make is:
\begin{gather*}
\text{Any optimal spectral set $ E^*$ satisfies first order optimality conditions for the Lagrangian}\\\text{  $L_{E^*}$,  and  furthermore there exists   $\alpha>0$ such that for any optimal set $E^*$, }
\\\text{the quadratic form $\ell_{E^*}= L''_{E^*}$ satisfies}\\\text{ $ \ell_{E^*}(\Phi\cdot \nu,\Phi \cdot \nu )\geq \alpha||\Phi \cdot \nu||_{L^2(\partial E^*)}^2$} \text{ for any $\Phi \in \mathcal X_1(E^*)$}\label{A2}\tag{$\bold A_2$}
\end{gather*}

\begin{remark}[Comment on this coercivity assumption]\emph{
Two things should be said about \eqref{A2}. The first one is that, contrary to the traditional coercivity assumption in shape optimisation, which involves $H^{\frac12}$ norms, it has been proved in \cite{MazariQuantitative} that, when $\O$ is a ball, this $L^2$ coercivity norm is optimal. Furthermore, as will be proved later in this article, all the remainder terms can also be bounded using this $L^2$ norm, which seems to indicate that this is indeed the optimal coercivity norm. We do however note that proving this for general domains seems quite arduous given that, in \cite{MazariQuantitative} this coercivity is proved using explicit computations, diagonalization of the second order shape derivative of the Lagrangian and ad-hoc comparison principles.}

\emph{The second remark one can make is that the uniformity of the constant $\alpha>0$ is crucial, as will be clear in the final steps of the proof.}
\end{remark}


\paragraph{Statement of the result} We are now in a position to state our main result:

\begin{theorem}\label{Th:Quanti}
Assume $\O$ is a $\mathscr C^2$ bounded domain in $\R^d$. Assume that $\O$ satisfies Assumptions \eqref{A1} and \eqref{A2}. 
Then there exists $C=C(V_0,\O)$ such that 
\begin{equation}\label{Eq:Quanti}
\forall V \in \mathcal M\,, \lambda(V)-\lambda(V^*)\geq C\inf_{V^*\in \mathcal I^*}||V-V^*||_{L^1}^2= C \operatorname{dist}_{L^1}(V,\mathcal I^*)^2.\end{equation}
\end{theorem}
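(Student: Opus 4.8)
The plan is to combine a soft compactness argument, a quantitative version of the bathtub principle, and the coercivity hypothesis \eqref{A2}, the bridge between them being a bang-bang rearrangement of an arbitrary competitor $V$ along its own eigenfunction. First I would reduce to a neighbourhood of $\mathcal I^*$. Arguing by contradiction and using that $\mathcal M$ is weak-$*$ sequentially compact, that $V\mapsto\lambda(V)$ is weak-$*$ continuous, and that a weak-$*$ limit of a minimising sequence is a minimiser (hence bang-bang, say $\mathds 1_{E_\infty}$), one checks that such a limit is even an $L^1$-limit: $\int_\Omega V_n=V_0=|E_\infty|$ together with $\int_\Omega V_n\mathds 1_{E_\infty}\to|E_\infty|$ force $V_n\to\mathds 1_{E_\infty}$ in $L^1$. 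Hence $\lambda(V)-\overline\lambda\geq\gamma_0>0$ whenever $\operatorname{dist}_{L^1}(V,\mathcal I^*)\geq\delta_0$, and since $\operatorname{dist}_{L^1}$ is bounded on $\mathcal M$ this range is settled with a crude constant. From now on $\operatorname{dist}_{L^1}(V,\mathcal I^*)=\|V-V^*\|_{L^1}<\delta_0$ for some nearest $V^*=\mathds 1_{E^*}\in\mathcal I^*$ (the infimum is attained, $\mathcal E^*$ being $L^1$-compact); set $u^*=u_{V^*}$, $g^*=(u^*)^2$. First order optimality and the concavity of $V\mapsto\lambda(V)$ force $V^*$ to maximise $W\mapsto\int_\Omega Wg^*$ on $\mathcal M$, i.e. $E^*=\{g^*>t\}$ with $t=\Lambda(E^*)>0$ (whence $E^*\Subset\Omega$); and since $\overline\lambda\geq-1$, the function $u^*-\sqrt t$ is superharmonic in $E^*$, positive inside, zero on $\partial E^*$, so Hopf's lemma together with \eqref{A1} and the compactness of $\mathcal E^*$ yields a uniform lower bound $|\nabla u^*|\geq c_0>0$ on $\bigcup_{E^*\in\mathcal E^*}\partial E^*$.

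Next I would establish a \emph{quantitative bathtub principle}: if $g\in C^{1,\beta}(\overline\Omega)$ is $C^1$-close enough to one of the $g^*$ above, so that its super-level set $E_g:=\{g>\tau_g\}$ of volume $V_0$ is a small normal graph over $\partial E^*$ with $|\nabla g|\geq c_0/2$ on it, then
\begin{equation*}
\int_\Omega g\,\mathds 1_{E_g}-\int_\Omega g\,W\;\geq\;c_{bt}\,\|W-\mathds 1_{E_g}\|_{L^1}^2\qquad\text{for all }W\in\mathcal M\text{ with }\|W-\mathds 1_{E_g}\|_{L^1}\text{ small,}
\end{equation*}
with $c_{bt}=c_{bt}(c_0,\|g\|_{C^1},V_0,\Omega)>0$: by the coarea formula, each unit of mass that $W$ moves across $\{g=\tau_g\}$ produces a gain equal to the difference of $g$-values between source and destination, and $|\nabla g|\geq c_0/2$ forces the closest available mass on either side to span an interval of $g$-levels of width $\gtrsim$ (moved mass), which gives the quadratic bound (sharp, as the one-dimensional case shows). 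Now, for $V$ as above with normalised eigenfunction $u_V$: since $\|V-V^*\|_{L^p}\leq\|V-V^*\|_{L^1}^{1/p}$ is small, $g_V:=u_V^2$ is $C^1$-close to $g^*$, hence its bathtub set $\widehat E:=\{g_V>\tau_{g_V}\}$ of volume $V_0$ is a small normal deformation $\widehat E=(\operatorname{Id}+\Phi)(E^*)$ of $E^*$; put $\widehat V:=\mathds 1_{\widehat E}\in\mathcal M$. Testing the Rayleigh quotient of $\lambda(\widehat V)$ against $u_V$ gives $\lambda(\widehat V)\leq\int_\Omega|\nabla u_V|^2-\int_\Omega\widehat V u_V^2$, while $\lambda(V)=\int_\Omega|\nabla u_V|^2-\int_\Omega V u_V^2$, so the bathtub bound for $g_V$ yields
\begin{equation*}
\lambda(\widehat V)\;\leq\;\lambda(V)-\Big(\int_\Omega\widehat V\,g_V-\int_\Omega V\,g_V\Big)\;\leq\;\lambda(V)-c_{bt}\,\|V-\widehat V\|_{L^1}^2 .
\end{equation*}
Since $\lambda(\widehat V)\geq\overline\lambda$, the triangle inequality $\|V-V^*\|_{L^1}\leq\|V-\widehat V\|_{L^1}+\|\widehat V-V^*\|_{L^1}$ splits the proof. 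If $\|V-\widehat V\|_{L^1}\geq\tfrac12\|V-V^*\|_{L^1}$, then $\lambda(V)-\overline\lambda\geq\lambda(V)-\lambda(\widehat V)\geq c_{bt}\|V-\widehat V\|_{L^1}^2\geq\tfrac{c_{bt}}4\operatorname{dist}_{L^1}(V,\mathcal I^*)^2$ and we are done; otherwise we are left with the bang-bang competitor $\widehat V$ satisfying $\|\widehat V-V^*\|_{L^1}\geq\tfrac12\|V-V^*\|_{L^1}$ and $\lambda(V)\geq\lambda(\widehat V)$.

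For that remaining case I would Taylor-expand the Lagrangian $L_{E^*}=\mathcal L_{\Lambda(E^*)}$ along $t\mapsto(\operatorname{Id}+t\Phi)(E^*)$, $t\in[0,1]$: the first order term vanishes by optimality of $E^*$ for $L_{E^*}$, and because $|\widehat E|=V_0=|E^*|$ one has $L_{E^*}(\widehat E)-L_{E^*}(E^*)=\lambda(\widehat E)-\overline\lambda$ exactly, so \eqref{A2} gives
\begin{equation*}
\lambda(\widehat V)-\overline\lambda\;=\;\tfrac12\,\ell_{E^*}(\Phi\cdot\nu,\Phi\cdot\nu)+o\big(\|\Phi\cdot\nu\|_{L^2(\partial E^*)}^2\big)\;\geq\;\tfrac{\alpha}{4}\,\|\Phi\cdot\nu\|_{L^2(\partial E^*)}^2
\end{equation*}
once $\|\Phi\cdot\nu\|$ is small. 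As $\|\widehat V-V^*\|_{L^1}=|\widehat E\,\triangle\,E^*|\leq C\|\Phi\cdot\nu\|_{L^1(\partial E^*)}\leq C\operatorname{Per}_\infty^{1/2}\|\Phi\cdot\nu\|_{L^2(\partial E^*)}$, this becomes $\lambda(\widehat V)-\overline\lambda\geq c'\|\widehat V-V^*\|_{L^1}^2$, whence $\lambda(V)-\overline\lambda\geq\lambda(\widehat V)-\overline\lambda\geq\tfrac{c'}4\operatorname{dist}_{L^1}(V,\mathcal I^*)^2$; the uniformity of $\alpha$ and $\operatorname{Per}_\infty$ from \eqref{A1}--\eqref{A2} is precisely what makes the final constant depend only on $(V_0,\Omega)$.

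The main obstacle is the remainder estimate in the last step: one needs the error to be $o(\cdot)$ genuinely \emph{in the $L^2(\partial E^*)$ norm}, uniformly over $\mathcal E^*$, with only $\|\Phi\cdot\nu\|_{L^\infty}\to0$ available — whereas a naive second-order shape calculus produces remainders involving stronger ($H^{1/2}$ or $H^1$) norms of $\Phi\cdot\nu$ that are \emph{not} small in this regime. One must therefore exploit the precise structure of the eigenvalue functional to bound every remainder term by $\|\Phi\cdot\nu\|_{L^\infty}^{\gamma}\,\|\Phi\cdot\nu\|_{L^2(\partial E^*)}^2$, and also make sense of $\ell_{E^*}$ on the merely $C^{1,\beta}$ deformation produced by the rearrangement (extending $\ell_{E^*}$ by density to $L^2(\partial E^*)$, or inserting a smoothing step). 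This is where the $\mathscr C^3$ regularity of \eqref{A1} is used in full, and it is the analytic core of the argument; a secondary difficulty is obtaining $c_{bt}$ with constants uniform over $\mathcal E^*$, which rests on the uniform non-degeneracy $|\nabla u^*|\geq c_0$ established above.
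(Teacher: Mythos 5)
Your proposal follows essentially the same route as the paper: a compactness reduction to a neighbourhood of $\mathcal I^*$, a quantitative bathtub principle applied to the volume-$V_0$ superlevel set of $u_V^2$ to pass to a bang-bang competitor $\widehat V=\mathds 1_{\widehat E}$, a second-order expansion of the Lagrangian along the normal deformation $\widehat E=(\operatorname{Id}+\Phi)(E^*)$ with coercivity from \eqref{A2} and remainders controlled by $\Vert \Phi\cdot\nu\Vert_{L^2(\partial E^*)}^2$ times a small factor, and a triangle-inequality dichotomy to conclude — the only cosmetic difference being that you argue directly on an arbitrary $V$ near $\mathcal I^*$ instead of setting up the paper's auxiliary extremal problem over $\mathcal I(\delta)$. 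You also correctly identify the analytic core (the uniform $L^2(\partial E^*)$ control of the second-order remainders, carried out in the paper's geometric, continuity and trace estimates), so the plan is sound.
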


Such quantitative inequalities have been studied in great details using shape optimisation when the optimisation parameter is the set $\O$ itself rather than the potential $V$ \cite{BDPV,FuscoMaggiPratelli}. Let us briefly mention that this inequality was established in the case $\O=\mathbb B(0;1)$ in \cite{MazariQuantitative}, and we also mention \cite{BrascoButtazzo,CarlenLieb}, where quantitative inequalities for potential optimisation problems are obtained in other settings, for $L^p$  constraints.  In this article, due to the particular nature of our constraints, our methods are significantly different from \cite{BrascoButtazzo,CarlenLieb}. We do emphasise that we will in the proof give an alternative proof of a quantitative Hardy-Littlewood inequality (here stated in the form of \textquotedblleft bathtub principle"). This inequality, Proposition \ref{Pr:Bathtub},  is linked to the results \cite{Cianchi} but, in our context, the amount of information we have allows for a more straightforward proof suited to our needs. Let us also note that quantitative inequalities for the Riesz-Sobolev rearrangement inequalities, which are crucial in rearrangements, were obtained recently \cite{Christ,Lieb} For further references in spectral quantitative inequalities, we refer to the survey paper \cite{BDPV}.

\subsubsection{The optimal bilinear control problem \& the turnpike phenomenon}\label{Su:PT}
In the second part, we consider the following evolution equation 
\begin{equation}\label{Eq:Main}
\begin{cases}\partial_t y_\mathcal V-\Delta y_\mathcal V-\mathcal Vy_\mathcal V=0\,, \text{ }t\geq 0\,, x \in \O\,, \\ y_\mathcal V(t,\cdot)=0\text{ on }\partial \O \,, \\ u_\mathcal V(0,\cdot)=y_0\geq 0\,, y_0\in L^\infty(\O)\,, y_0\neq 0\text{ in }\O.
\end{cases}\,, \text{ under the constraint that for a.e $t\geq 0\,, \mathcal V(t,\cdot) \in \mathcal M$}.\end{equation}
The set  $\mathcal M$ was defined in \eqref{Eq:Adm}. We define 
\begin{equation}\tag{$\bold{Adm_{evol}}$}\label{Eq:AdmEvol}\mathcal M_{\R_+}:=\left\{\mathcal V\,, \text{for a.e $t\geq 0\,, \mathcal V(t,\cdot) \in \mathcal M$}\right\}.\end{equation}

This equation is a  crude model for linear growth; in that context, $\mathcal V(t,\cdot)$ represent the resources distribution at a time $t$.

Following that interpretation we want, for some fixed time horizon $T>0$,  to maximise the total population size with respect to the control $\mathcal V$, that is, we want to solve 
\begin{equation}\tag{$\bold{P}_T$}\label{Eq:PT}\fbox{$\displaystyle 
\sup_{\mathcal V\in \mathcal M_{\R_+}}\int_\O y_\mathcal V(T,x)dx.$}\end{equation}The existence of optimal controls is straightforward  and follows from the direct method in the calculus of variations. On the other hand, the qualitative properties of the optimal controls are much more complicated to analyse. 
\begin{remark}[Comment on the type of functionals]\emph{
It is notable that, in contrast to several works dealing with turnpike or bilinear optimal control problems \cite{GuillnGonzlez2020,LanceTrelatZuazua,PighinSakamoto}, we do not work with a global $L^2((0;T)\times \O)$ constraint on the control, and that the functional under consideration is not of tracking type, that is, it does not contain a term of the form 
$$\int_0^T \Vert y_{\mathcal V}-\overline y\Vert_{L^2(\O)}^2$$ for some reference configuration or trajectory $\overline u.$ For instance, in  \cite{Borzi}, the same type of state equation is considered but the functional involves a tracking term.}\end{remark}
Let us then fix an optimal control $\mathcal V_T^*$. Our main question here is 

\begin{equation}\tag{$\bold Q_2$}\label{Q2}\text{\emph{Is it true that, for most of the time, $\mathcal V_T^*$ is close to a static control $V\in \mathcal M$?}}\end{equation}
The main difficulty is that the bilinearity of the control makes all the existing methods used to prove the turnpike property inapplicable, as they all deal with linear controls \cite{LanceTrelatZuazua,Zuazua2017}. Furthermore, the $L^\infty$- constraint leads to potential difficulties, as it evades the classical $L^2$ setting of the turnpike property.

We prove that, when the time-horizon is large, this property holds for the set of optimal potential for the spectral minimisation problem \eqref{Eq:E1} using the quantitative inequality established in the first part. 

\begin{remark}[Solution of \eqref{Eq:PT} in radial geometry]\label{Rem:ATL}\emph{
In \cite{Alvino1990},  the problem \eqref{Eq:PT} is explicitly solved when $\O$ is a ball. It is shown that the unique solution of this optimal control problem is in fact a uniquely characterised static distribution $\mathcal V_T^*(t,x)\equiv \mathds 1_{\mathbb B^*}$, where $\operatorname{Vol}(\B^*)=V_0$. The proof relies on  involved Talenti-type inequalities and on very fine properties of the distribution functions. Such tools are not available in other geometries, and the turnpike property can thus be seen as a weaker generalization of these results to other geometries.
}\end{remark}

\begin{theorem}\label{Th:Turnpike}
Let $\O$ satisfy \eqref{A1}-\eqref{A2}. There exists $M>0$ such that, for any $T>0$ and any solution  $\mathcal V_T^*$ of \eqref{Eq:PT} there holds
\begin{equation}
\int_0^T \operatorname{dist}_{L^1}\left(\mathcal V_T^*(t,\cdot),\mathcal I^*\right)^2dt\leq M.
\end{equation}
\end{theorem}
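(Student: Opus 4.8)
The plan is to derive the integral turnpike estimate from the quantitative inequality of Theorem~\ref{Th:Quanti} by a dissipation/energy argument, comparing the evolution \eqref{Eq:Main} driven by the optimal time-dependent control $\mathcal V_T^*$ with the static evolution driven by an optimal potential $V^*\in\mathcal I^*$. First I would introduce a convenient Lyapunov-type functional along the trajectory. The natural object is the (normalized) Rayleigh quotient of $y_{\mathcal V}(t,\cdot)$, or equivalently a logarithmic growth rate: set $R(t):=\bigl(\int_\O|\n y_{\mathcal V_T^*}(t,\cdot)|^2 - \int_\O \mathcal V_T^*(t,\cdot)\,y_{\mathcal V_T^*}(t,\cdot)^2\bigr)/\int_\O y_{\mathcal V_T^*}(t,\cdot)^2$. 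By \eqref{Eq:Rayleigh}, for a.e.\ $t$ one has $R(t)\geq \lambda(\mathcal V_T^*(t,\cdot))\geq \overline\lambda$, and, crucially, $-\tfrac{d}{dt}\log\int_\O y_{\mathcal V_T^*}(t,\cdot)^2 = 2R(t)$, so $\int_0^T R(t)\,dt = \tfrac12\log\bigl(\|y_0\|_{L^2}^2/\|y_{\mathcal V_T^*}(T,\cdot)\|_{L^2}^2\bigr)$. Combining this identity with the lower bound $R(t)\geq \overline\lambda + C\,\mathrm{dist}_{L^1}(\mathcal V_T^*(t,\cdot),\mathcal I^*)^2$ (which will follow from Theorem~\ref{Th:Quanti} once one controls $R(t)-\lambda(\mathcal V_T^*(t,\cdot))$), one gets
\begin{equation*}
C\int_0^T \mathrm{dist}_{L^1}(\mathcal V_T^*(t,\cdot),\mathcal I^*)^2\,dt \;\leq\; \int_0^T \bigl(R(t)-\overline\lambda\bigr)\,dt \;=\; \tfrac12\log\frac{\|y_0\|_{L^2}^2}{\|y_{\mathcal V_T^*}(T,\cdot)\|_{L^2}^2} \;-\;\overline\lambda\,T.
\end{equation*}

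The remaining task is then to bound the right-hand side uniformly in $T$. To do this I would use the optimality of $\mathcal V_T^*$ for \eqref{Eq:PT}: since $V^*\in\mathcal I^*$ is an admissible static control for the evolution problem, $\int_\O y_{\mathcal V_T^*}(T,\cdot)\geq \int_\O y_{V^*}(T,\cdot)$, where $y_{V^*}$ solves \eqref{Eq:Main} with the static potential $V^*$. Expanding $y_{V^*}$ on the Dirichlet eigenbasis of $-\Delta-V^*$ and keeping only the first mode gives $\int_\O y_{V^*}(T,\cdot)\geq c\,e^{-\overline\lambda T}$ for some $c=c(y_0,V^*)>0$ (using $y_0\geq 0$, $y_0\neq 0$, so the projection of $y_0$ onto $u_{V^*}$ is strictly positive). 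Hence $\int_\O y_{\mathcal V_T^*}(T,\cdot)\geq c\,e^{-\overline\lambda T}$; conversely, an upper bound $\int_\O y_{\mathcal V_T^*}(T,\cdot)\leq C' e^{-\overline\lambda T}$ follows immediately because $R(t)\geq\overline\lambda$ forces $\|y_{\mathcal V_T^*}(T,\cdot)\|_{L^2}\leq \|y_0\|_{L^2}e^{-\overline\lambda T}$, and then $\int_\O y_{\mathcal V_T^*}(T,\cdot)\leq |\O|^{1/2}\|y_{\mathcal V_T^*}(T,\cdot)\|_{L^2}$. Substituting these two-sided bounds back, the $\overline\lambda T$ terms cancel and the right-hand side above is bounded by a constant $M$ depending only on $\O$, $V_0$, $y_0$, as desired.

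The step I expect to be the main obstacle is making rigorous the passage from the Rayleigh quotient $R(t)$ to $\lambda(\mathcal V_T^*(t,\cdot))$, i.e.\ showing that the "excess" $R(t)-\lambda(\mathcal V_T^*(t,\cdot))$ is nonnegative and, more importantly, that nothing is lost when replacing $R(t)$ by the eigenvalue in the quantitative inequality. The inequality $R(t)\geq\lambda(\mathcal V_T^*(t,\cdot))$ is free from \eqref{Eq:Rayleigh}, and that is in fact all one needs: combining $R(t)\geq\lambda(\mathcal V_T^*(t,\cdot))$ with Theorem~\ref{Th:Quanti} directly yields $R(t)\geq\overline\lambda + C\,\mathrm{dist}_{L^1}(\mathcal V_T^*(t,\cdot),\mathcal I^*)^2$ for a.e.\ $t$, so the argument closes. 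One should however be careful about the regularity in time needed to justify the identity $\tfrac{d}{dt}\log\int_\O y^2 = -2R(t)$ — this requires parabolic regularity for \eqref{Eq:Main} with an $L^\infty$ potential, which is standard (bootstrapping from $y_0\in L^\infty$), and the strict positivity/Harnack-type lower bound used for the constant $c$, which follows from the strong maximum principle. A secondary point to watch is the uniformity of the constant $c$ in the lower bound $\int_\O y_{V^*}(T,\cdot)\geq c e^{-\overline\lambda T}$ over $V^*\in\mathcal I^*$; since $\mathcal I^*$ is compact in $L^1$ and $V\mapsto u_V$ is continuous, one may simply fix a single $V^*$, so no uniformity over $\mathcal I^*$ is actually required here.
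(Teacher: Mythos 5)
Your proposal is correct and follows essentially the same route as the paper's proof: you compare the optimal time-dependent trajectory against the static evolution driven by an arbitrary $V^*\in\mathcal I^*$, obtain a lower bound $\int_\O y_{\mathcal V_T^*}(T,\cdot)\gtrsim e^{-\overline\lambda T}$ from optimality and the spectral expansion, obtain an upper bound on $\|y_{\mathcal V_T^*}(T,\cdot)\|_{L^2}$ from the Rayleigh-quotient differential inequality (your exact identity $-\tfrac{d}{dt}\log\int_\O y^2=2R(t)$ with $R(t)\geq\lambda_1(t)$ is precisely the paper's energy estimate plus Gr\"onwall, in integrated form), compare the two via Cauchy--Schwarz, and finish with Theorem~\ref{Th:Quanti}. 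The only cosmetic difference is your packaging through $R(t)$; the unused upper bound $\int_\O y_{\mathcal V_T^*}(T,\cdot)\leq C'e^{-\overline\lambda T}$ and the Harnack remark are superfluous, since the spectral expansion already supplies the needed strictly positive leading coefficient $\int_\O y_0\,u_{V^*}$, exactly as in the paper.
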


\paragraph{Notational conventions}
\begin{itemize}
\item When $E\subset \O$ is a measurable subset, the notation $u_E$ stands for $u_{\mathds 1_E}$ defined in\eqref{Eq:Eig}.
\item When $E\subset \O$ is a measurable subset, the notation $\lambda(E)$ is a shorthand for $\lambda(\mathds 1_E)$.
\end{itemize}

\section{Proof of Theorem \ref{Th:Quanti}: A quantitative spectral inequality}\label{Se:Quanti}

\subsection{Structure of the proof}To prove Theorem \ref{Th:Quanti}, we proceed in several steps, as was done in \cite{MazariQuantitative}. Several complications arise here, in this more general context.

\begin{itemize}
\item \underline{Step 0:} As a preliminary step, we gather some technical information about the eigenfunctions and their normal derivatives.
\item  \underline{Step 1:} The first crucial step is to obtain local quantitative inequalities for normal deformations of optimal sets. 
This is done using Assumptions \eqref{A1}-\eqref{A2} and following the general strategy synthesised in \cite{DambrineLamboley}.
\item \underline{Step 2:} We introduce the following problem inspired by \cite{AcerbiFuscoMoreni}: for any parameter $\delta$, define the class 
$$\mathcal I(\delta):=\left\{V \in \mathcal M\,, \operatorname{dist}_{L^1}(V,\mathcal I^*)=\delta\right\}$$ as well as the auxiliary problem 
\begin{equation}\label{Eq:09}
\inf_{V\in \mathcal I(\delta)}\big(\lambda(V)-\overline\lambda\big).\end{equation}
We will prove (Proposition \ref{Pr:Exist}) that this optimisation problem has a solution $V_\delta$, write down  the optimality conditions  and show that proving Theorem \ref{Th:Quanti} is equivalent to proving that 
$$\underset{\delta \to 0}{\lim\inf}\left(\frac{\lambda(V_\delta)-\overline \lambda}{\delta^2}\right)>0.$$ 
\item \underline{Step 3:} In this step, we  establish a quantitative bathtub principle for functions with regular level sets see Proposition \ref{Pr:Bathtub}.
\item \underline{Step 4:} Using all the previous steps as well as Assumptions \eqref{A1}-\eqref{A2}, we will give the proof of Theorem 1.  
\end{itemize} 
\paragraph{Heuristics for the proof} Since this proof is long, let us explain the idea graphically. Let us assume that we have a unique optimal spectral set $E^*$.

\begin{figure}[H]
\begin{center}
\begin{tikzpicture}
\draw[blue,thick,fill=blue, fill opacity=0.1] plot [smooth, tension=1.2] coordinates{ (2,0.1) (1.5,1) (1,1.3) (0.5, 1.6) (-0.1,1.3) (-1,0.9) (-1,0) (-0.2,-0.4) (1,-0.7) (2.1,-0.9) (2,0.1)};
\draw(0.5,0.5)node[scale=1] {$E^*$};
\end{tikzpicture}
\end{center}
\caption{Depiction of the optimal set $E^*$. }

\end{figure}
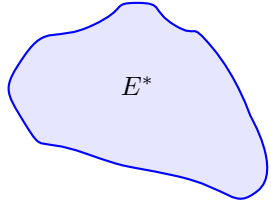

 We will argue by contradiction and assume that the quantitative inequality does not hold. For $\delta>0$ small enough, the solution $V_\delta$ of the auxiliary problem \eqref{Eq:09}, which we can expect to be the characteristic function of a set $E_\delta$, will be very close to $E^*$:

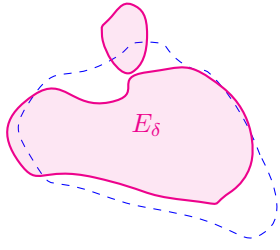
\begin{figure}[H]
\begin{center}
\begin{tikzpicture}
\draw[blue,thin,dashed, fill opacity=0.1] plot [smooth, tension=1.2] coordinates{ (2,0.1) (1.5,1) (1,1.3) (0.5, 1.6) (-0.1,1.3) (-1,0.9) (-1,0) (-0.2,-0.4) (1,-0.7) (2.1,-0.9) (2,0.1)};
\draw[magenta,thick, fill=magenta,fill opacity=0.1] plot [smooth, tension=1.2] coordinates{ (1.9,0.1) (1.5,1) (0.5, 1.2) (0,0.8) (-1,0.9) (-1.2,0) (-0.2,-0.2) (1,-0.5) (1.6,-0.4) (1.9,0.1)};
\draw[magenta](0.5,0.5)node[scale=1] {$E_\delta$};

\draw[magenta,thick, fill=magenta,fill opacity=0.1] plot [smooth, tension=1] coordinates{ (0,1.4) (-0.1,1.7) (-0.01,1.9) (0.3,2.1) (0.5,1.7) (0.3,1.2) (0,1.4)};

\end{tikzpicture}
\end{center}
\caption{Depiction of the solution $E_\delta$ of the auxiliary problem \eqref{Eq:09}; $E^*$ is depicted in dashed blue. }

\end{figure}

As a first step, we use the bathtub principle. Let $\mu_\delta>0$ be such that the level set of the eigenfunction $u_\delta$ associated with $\mathds 1_{E_\delta}$ satisfies $|\{u_\delta>\mu_\delta\}|=V_0$. We define $\tilde E_\delta:=\{u_\delta>\mu_\delta\}$:

\begin{figure}[H]
\begin{center}
\begin{tikzpicture}

\draw[magenta,thin, dashed] plot [smooth, tension=1.2] coordinates{ (1.9,0.1) (1.5,1) (0.5, 1.2) (0,0.8) (-1,0.9) (-1.2,0) (-0.2,-0.2) (1,-0.5) (1.6,-0.4) (1.9,0.1)};

\draw[magenta,thin,dashed] plot [smooth, tension=1] coordinates{ (0,1.4) (-0.1,1.7) (-0.01,1.9) (0.3,2.1) (0.5,1.7) (0.3,1.2) (0,1.4)};
\draw[orange](0.5,0.5)node[scale=1] {$\tilde E_\delta$};

\draw[orange,thick,fill=orange, fill opacity=0.1] plot [smooth, tension=1.3] coordinates{(2,-0.5) (2,0) (1.5,0.8) (1,1.4) (0.5, 2) (0,1.5) (-0.5,1) (-1,0) (-0.5,-1) (1,-0.7) (2,-0.5)};

\end{tikzpicture}
\end{center}\caption{Level set $\tilde E_\delta$ of the eigenfunction $u_\delta$ associated with $E_\delta$ and satisfying the volume constraint; $E_\delta$ is depicted in dashed orange. }

\end{figure}
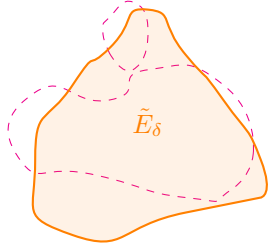

  Using  the Rayleigh quotient formulation \eqref{Eq:Rayleigh} of the eigenvalues,  we have $\lambda(E_\delta)\geq \lambda(\tilde E_\delta)$, and the quantitative bathtub principle (Proposition \ref{Pr:Bathtub}) ensures that $\lambda(E_\delta)\geq \lambda(\tilde E_\delta)+c\Vert \mathds 1_{E_\delta}-\mathds 1_{\tilde E_\delta}\Vert_{L^1}^2$ for some $c>0$.

However, when $\delta>0$ is small enough, $\tilde E_\delta$ is a sufficiently regular normal deformation of $E^*$:

\begin{figure}[H]
\begin{center}
\begin{tikzpicture}
\draw[blue,thin,dashed] plot [smooth, tension=1.2] coordinates{ (2,0.1) (1.5,1) (1,1.3) (0.5, 1.6) (-0.1,1.3) (-1,0.9) (-1,0) (-0.2,-0.4) (1,-0.7) (2.1,-0.9) (2,0.1)};

\draw[orange,thick] plot [smooth, tension=1.3] coordinates{(2,-0.5) (2,0) (1.5,0.8) (1,1.4) (0.5, 2) (0,1.5) (-0.5,1) (-1,0) (-0.5,-1) (1,-0.7) (2,-0.5)};

\end{tikzpicture}
\end{center}
\caption{Comparison of $\tilde E_\delta$ (in orange) and of $E_\delta$ (in dashed blue). }
\end{figure}
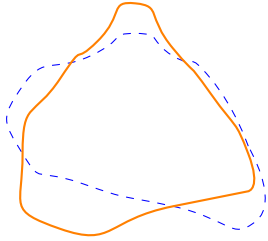
Using Step 2 and the quantitative inequality for local deformations, we will then get $\lambda(\tilde E_\delta)\geq \lambda(E^*)+c'\Vert \mathds 1_{\tilde E_\delta}-\mathds 1_{E^*}\Vert_{L^1}^2$ for some $c'>0$. Combining these two steps gives 
$$\lambda(E_\delta)\geq \lambda(E^*)+c'' \left(\Vert \mathds 1_{\tilde E_\delta}-\mathds 1_{E^*}\Vert_{L^1}^2+\Vert \mathds 1_{\tilde E_\delta}-\mathds 1_{E_\delta}\Vert_{L^1}^2\right)$$ for some $c''>0$. Since 
$\Vert \mathds 1_{ E_\delta}-\mathds 1_{E^*}\Vert_{L^1}=\delta$ either $\Vert \mathds 1_{\tilde E_\delta}-\mathds 1_{E^*}\Vert_{L^1}$ or $\Vert \mathds 1_{\tilde E_\delta}-\mathds 1_{E_\delta}\Vert_{L^1}$ should be, as $\delta \to 0$, of order $\delta$, which then gives the required contradiction.      

To work alongside these steps, we however need several basic regularity results on the eigenfunctions $u_V$ defined in Equation \eqref{Eq:Eig}, and some properties on the optimisation problem \eqref{Eq:E1} itself. We include these informations here as a preliminary step.

\subsection{Step 0: Technical preliminaries}

We begin with basic regularity estimates on eigenfunctions:
\begin{lemma}\label{Le:Regularity}
For any $s\in [0;1]$, there exists $\mathscr Y(s)>0$ such that 
$$\forall V\in \mathcal M\,, \Vert u_V\Vert_{\mathscr C^{1,s}}\leq \mathscr Y(s).$$ For any $p\in (1;+\infty)$, there exists $\mathscr W(p)$ such that 
$$\forall V\in \mathcal M\,, \Vert u_V\Vert_{W^{2,p}}\leq \mathscr W(p).$$ \end{lemma}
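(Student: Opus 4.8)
The plan is to establish uniform elliptic regularity estimates for the family $\{u_V\}_{V\in\mathcal M}$ by exploiting the fact that $\mathcal M$ is bounded in $L^\infty$ and that $\lambda(V)$ ranges over a bounded interval.

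\textbf{Step 1: Uniform bounds on the eigenvalue and on $\|u_V\|_{L^\infty}$.}
First I would observe that $\lambda(V)$ is uniformly bounded for $V\in\mathcal M$: testing the Rayleigh quotient \eqref{Eq:Rayleigh} with a fixed $W^{1,2}_0(\O)$ function gives an upper bound, while $0\leq V\leq 1$ gives $\lambda(V)\geq \lambda(0)-1 = \lambda_1(\O)-1$, where $\lambda_1(\O)$ is the first Dirichlet eigenvalue of $-\Delta$. So $|\lambda(V)|\leq \Lambda_0$ for some $\Lambda_0=\Lambda_0(\O)$. Next, since $-\Delta u_V = (V+\lambda(V))u_V$ with $\|V+\lambda(V)\|_{L^\infty}\leq 1+\Lambda_0 =: \kappa$ and $\|u_V\|_{L^2}=1$, a De Giorgi--Nash--Moser / Stampacchia iteration (applied to the equation $-\Delta u_V = f u_V$ with bounded coefficient $f$, see e.g. Gilbarg--Trudinger) yields $\|u_V\|_{L^\infty(\O)}\leq C_0(\O,\kappa)$. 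Here I use that $\O$ is bounded with $\mathscr C^2$ (indeed $\mathscr C^3$) boundary so the global $L^\infty$ bound up to the boundary holds with the Dirichlet condition.

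\textbf{Step 2: Bootstrap to $W^{2,p}$ and $\mathscr C^{1,s}$.}
With $\|u_V\|_{L^\infty}\leq C_0$, the right-hand side $f_V := (V+\lambda(V))u_V$ satisfies $\|f_V\|_{L^\infty(\O)}\leq \kappa C_0$, hence $\|f_V\|_{L^p(\O)}\leq \kappa C_0 |\O|^{1/p}$ for every $p\in(1,\infty)$. Applying global $L^p$ elliptic regularity for the Dirichlet problem $-\Delta u_V = f_V$, $u_V|_{\partial\O}=0$ on the $\mathscr C^2$ domain $\O$ (Agmon--Douglis--Nirenberg / Calderón--Zygmund, cf. Gilbarg--Trudinger Thm 9.15) gives
\[
\|u_V\|_{W^{2,p}(\O)} \leq C(p,\O)\bigl(\|f_V\|_{L^p(\O)} + \|u_V\|_{L^p(\O)}\bigr) \leq \mathscr W(p),
\]
uniformly in $V\in\mathcal M$. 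Finally, for $s\in[0,1]$ choose $p>d$ large enough that $W^{2,p}(\O)\hookrightarrow \mathscr C^{1,s}(\overline\O)$ (Morrey's embedding, valid since $\partial\O$ is Lipschitz), which produces the constant $\mathscr Y(s)$ from $\mathscr W(p)$. This closes the argument.

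\textbf{Main obstacle.}
The only genuinely delicate point is the \emph{uniform} $L^\infty$ bound of Step 1: one must make sure the Moser/Stampacchia constant depends only on $\O$ and on the $L^\infty$ bound $\kappa$ of the zeroth-order coefficient (and on $\|u_V\|_{L^2}=1$), not on $V$ itself — which it does, since the iteration only sees $\|V+\lambda(V)\|_{L^\infty}$ and the normalisation. Once this is in hand, Step 2 is entirely standard linear elliptic regularity on a fixed smooth domain, with all constants inherited uniformly. A minor subtlety is that $\lambda(V)$ enters the coefficient, but it is controlled by the a priori eigenvalue bound, so no circularity arises.
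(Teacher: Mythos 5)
Your proof is correct, and its skeleton (uniform eigenvalue bound $\Rightarrow$ uniform $L^\infty$ bound on $u_V$ $\Rightarrow$ uniform $W^{2,p}$ by Calder\'on--Zygmund $\Rightarrow$ $\mathscr C^{1,s}$ by Morrey) matches the paper's. The one genuine difference is how the uniform $L^\infty$ bound is reached: you invoke De Giorgi--Nash--Moser/Stampacchia for $-\Delta u_V=(V+\lambda(V))u_V$ in one shot, with a constant depending only on $\O$ and $\kappa=1+\Lambda_0$, which works uniformly in every dimension. The paper instead starts from the uniform energy bound $\int_\O|\nabla u_V|^2\leq\lambda_1^D(\O)+1$, deduces a uniform $W^{2,2}$ bound, and then runs a dimension-dependent Sobolev bootstrap ($W^{2,2}\hookrightarrow L^{q_1}$, then $W^{2,q_1}$ regularity, then $L^{q_2}$, and so on until the exponent exceeds $d/2$), which is essentially a finite, hand-rolled version of the same iteration carried out entirely within linear $L^p$ elliptic theory. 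Your route is shorter and uniform in $d$ at the cost of citing a heavier black box; the paper's is more elementary but case-splits on the dimension. One shared caveat: neither argument reaches the endpoint $s=1$ of the statement, since $W^{2,p}\hookrightarrow\mathscr C^{1,s}$ only gives $s=1-d/p<1$ and the right-hand side is merely $L^\infty$; the paper itself only ever uses $s\in(0;1)$ downstream, so this is a harmless imprecision in the statement rather than a gap in your proof.
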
 

This Lemma relies on a classical bootstrap method; we postpone its proof to Appendix \ref{An:Technical}.

We will make a repeated use of the following semi-continuity property. Although it is a classical result, we state it as a Lemma and prove it in Appendix \ref{An:Technical}:
\begin{lemma}\label{Le:LSCeigenvalue}
Let $\{V_k\}_{k\in \N}\in \mathcal M^\N$. Assume that there exists $V_\infty \in \mathcal M$ such that $\{V_k\}_{k\in \N}$ converges  weak $L^\infty-*$ to $V_\infty$. Then 
\begin{equation}\underset{k\to \infty}{\underline \lim}\lambda(V_k)\geq \lambda(V_\infty).\end{equation} 
\end{lemma}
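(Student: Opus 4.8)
The idea is to use the Rayleigh quotient characterization \eqref{Eq:Rayleigh} of $\lambda(V_\infty)$ against a limit of the eigenfunctions $u_{V_k}$, the point being that $\lambda(V_\infty)$ is an \emph{infimum}, so any fixed admissible test function furnishes an upper bound for it; combined with weak lower semicontinuity of the Dirichlet energy this will yield the desired lower bound for $\liminf_k\lambda(V_k)$. First I would set $\ell:=\underline{\lim}_k\lambda(V_k)$ and record that $\ell$ is finite: monotonicity of the Rayleigh quotient in $V$ together with $0\leq V\leq 1$ gives $\lambda_1(-\Delta)-1\leq \lambda(V)\leq\lambda_1(-\Delta)$ for every $V\in\mathcal M$. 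Passing to a subsequence (not relabelled) one may assume $\lambda(V_k)\to\ell$; the weak-$*$ convergence $V_k\rightharpoonup^* V_\infty$ is of course preserved. Testing \eqref{Eq:Eig} against the normalised eigenfunctions $u_k:=u_{V_k}$ (with $\int_\O u_k^2=1$) gives $\int_\O|\n u_k|^2=\lambda(V_k)+\int_\O V_ku_k^2\leq\lambda(V_k)+1$, so $(u_k)_k$ is bounded in $W^{1,2}_0(\O)$; alternatively one may quote Lemma \ref{Le:Regularity} for stronger uniform bounds, which would let one replace the Rellich argument below by Arzel\`a--Ascoli.

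Next, by Rellich--Kondrachov, after a further extraction $u_k\rightharpoonup u_\infty$ weakly in $W^{1,2}_0(\O)$ and $u_k\to u_\infty$ strongly in $L^2(\O)$. In particular $\int_\O u_\infty^2=1$, hence $u_\infty\neq 0$, and $u_\infty\in W^{1,2}_0(\O)$ is an admissible competitor in \eqref{Eq:Rayleigh} for the potential $V_\infty$, so that
$$\lambda(V_\infty)\leq\int_\O|\n u_\infty|^2-\int_\O V_\infty u_\infty^2.$$
It then remains to pass to the limit in the right-hand side. The gradient term is handled by weak lower semicontinuity of the $W^{1,2}$-seminorm: $\int_\O|\n u_\infty|^2\leq\underline{\lim}_k\int_\O|\n u_k|^2$. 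For the potential term I would write
$$\int_\O V_ku_k^2-\int_\O V_\infty u_\infty^2=\int_\O V_k\big(u_k^2-u_\infty^2\big)+\int_\O (V_k-V_\infty)u_\infty^2 ;$$
the first integral tends to $0$ because $|V_k|\leq 1$ and $u_k^2\to u_\infty^2$ in $L^1(\O)$ (a consequence of the strong $L^2$ convergence), and the second tends to $0$ by the weak-$*$ convergence $V_k\rightharpoonup^* V_\infty$ tested against $u_\infty^2\in L^1(\O)$. Combining these facts with $\int_\O u_k^2=1$,
$$\lambda(V_\infty)\leq\int_\O|\n u_\infty|^2-\int_\O V_\infty u_\infty^2\leq\underline{\lim}_k\Big(\int_\O|\n u_k|^2-\int_\O V_ku_k^2\Big)=\underline{\lim}_k\lambda(V_k)=\ell,$$
which is exactly the claimed inequality.

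As a variant one could instead pass to the limit directly in the weak formulation of \eqref{Eq:Eig}, obtaining that $u_\infty\geq0$, $u_\infty\neq0$ solves $-\Delta u_\infty-V_\infty u_\infty=\ell u_\infty$; since the principal eigenvalue is the only one admitting a nonnegative eigenfunction, this forces $\ell=\lambda(V_\infty)$, which is slightly stronger but requires the uniqueness/simplicity statement recalled after \eqref{Eq:Rayleigh}. I would keep the Rayleigh-quotient route as the main argument since it is shorter and self-contained. The only genuinely delicate point — and a mild one — is the convergence of the mixed term $\int_\O V_ku_k^2$, where neither factor converges strongly on its own; this is resolved by the standard ``strong times weak-$*$'' splitting above, using that $0\leq V_k\leq 1$ is uniformly bounded and $u_k^2$ converges strongly in $L^1$.
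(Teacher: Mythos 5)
Your proof is correct and follows essentially the same route as the paper's: uniform $W^{1,2}_0$ bounds on the normalised eigenfunctions, Rellich--Kondrachov extraction, the strong-$L^2$/weak-$*$ splitting for the term $\int_\O V_k u_k^2$, and weak lower semicontinuity of the Dirichlet energy. The only difference is cosmetic: you conclude directly by testing the Rayleigh quotient for $V_\infty$ with $u_\infty$, whereas the paper first passes to the limit in the eigen-equation to identify $\lim_k\lambda(V_k)=\lambda(V_\infty)$ (your stated ``variant'') before invoking the same semicontinuity inequality.
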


A key component of the proof of the quantitative inequality is the non-degeneracy of eigenfunctions. Here, by non-degeneracy, we mean that the normal derivative at any point on the boundary is negative, and as a straightforward consequence of our Assumption \eqref{A1} we obtain the following result:

\begin{lemma}\label{Le:Hopf} Assume $\O$ satisfies Assumption \eqref{A1}. Then there exists a constant $c_0$ such that
$$\forall E^*\in \mathcal E^*\, , \min\left(u_{E^*}|_{\partial E^*}\,, \inf_{\partial E^*}\left(-\frac{\partial u_{E^*}}{\partial \nu}\right)_{|\partial E^*}\right)\geq c_0>0$$\end{lemma}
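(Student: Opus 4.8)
The statement asserts two uniform positivity bounds over the (possibly infinite) family $\mathcal E^*$ of optimal spectral sets: first a lower bound on the eigenfunction $u_{E^*}$ restricted to $\partial E^*$ (note $\partial E^*$ sits inside $\O$, so $u_{E^*}$ need not vanish there), and second a uniform lower bound on $-\partial_\nu u_{E^*}$ along $\partial E^*$, i.e. a uniform Hopf-type estimate. The plan is to combine the uniform regularity estimates of Lemma \ref{Le:Regularity}, the uniform geometric control of $\partial E^*$ coming from \eqref{A1} (uniform $\mathscr C^3$ regularity, $H_\infty<\infty$, $\operatorname{Per}_\infty<\infty$), and a compactness/contradiction argument against the strict Hopf inequality and the strong maximum principle.

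First I would set up the contradiction framework: suppose no such $c_0>0$ exists, so there is a sequence $E_k^*\in\mathcal E^*$ along which $\min\big(u_{E_k^*}|_{\partial E_k^*},\ \inf_{\partial E_k^*}(-\partial_\nu u_{E_k^*})\big)\to 0$, realized at points $x_k\in\partial E_k^*$. By Lemma \ref{Le:Regularity} the eigenfunctions $u_{E_k^*}$ are uniformly bounded in $\mathscr C^{1,s}(\overline\O)$ and in $W^{2,p}(\O)$, and the eigenvalues $\lambda(E_k^*)=\overline\lambda$ are constant; the potentials $\mathds 1_{E_k^*}$ are bounded in $L^\infty$ and, by $\operatorname{Per}_\infty<\infty$, precompact in $L^1$ (BV compactness), so along a subsequence $\mathds 1_{E_k^*}\to\mathds 1_{E_\infty}$ in $L^1$ with $\mathds 1_{E_\infty}\in\mathcal M$, and $u_{E_k^*}\to u_\infty$ in $\mathscr C^1(\overline\O)$. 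Passing to the limit in \eqref{Eq:Eig} (and in the Rayleigh quotient, using Lemma \ref{Le:LSCeigenvalue} together with $L^1$ convergence of the potentials) identifies $u_\infty$ as a nonnegative, nontrivial first eigenfunction, hence $u_\infty=u_{E_\infty}>0$ in $\O$ with $\lambda(E_\infty)=\overline\lambda$, so $E_\infty$ is itself an optimal spectral set and, by \eqref{A1}, $\partial E_\infty$ is $\mathscr C^3$. Up to a further subsequence $x_k\to x_\infty\in\overline\O$.

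The case analysis then runs as follows. If the failure comes from $u_{E_k^*}(x_k)\to 0$: since $u_\infty>0$ on the compact set $\partial E_\infty\subset\O$ it has a positive minimum there, but one must rule out that $x_\infty\in\partial\O$, where $u_\infty=0$. This is where the uniform geometry enters — I would use $H_\infty<\infty$ together with the uniform $\mathscr C^3$ bounds to get a uniform interior-ball radius $r_0$ for each $E_k^*$ at $x_k$ from one side, forcing $\operatorname{dist}(x_k,\partial\O)$ to be controlled; alternatively, since $\mathds 1_{E_k^*}\to\mathds 1_{E_\infty}$ in $L^1$ and $E_\infty\Subset\O$ (as $\overline\lambda<\lambda(\mathds 1_\O)$ type considerations / $V_0<|\O|$ keep mass away from $\partial\O$), a uniform perimeter bound prevents the sets from degenerating onto $\partial\O$. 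Hence $x_\infty\in\O$ and $u_\infty(x_\infty)>0$, contradicting $u_{E_k^*}(x_k)\to 0$ via $\mathscr C^0$ convergence. If instead $-\partial_\nu u_{E_k^*}(x_k)\to 0$: by $\mathscr C^1$ convergence of eigenfunctions and $\mathscr C^3$ (hence $\mathscr C^1$) convergence of the unit normals $\nu_{E_k^*}$ — itself a consequence of uniform $\mathscr C^3$ control of $\partial E_k^*$ and the argument above locating the limit interface as $\partial E_\infty$ — we get $-\partial_\nu u_\infty(x_\infty)=0$ at a point $x_\infty\in\partial E_\infty\subset\O$. But $u_\infty$ solves $-\Delta u_\infty=(\,\mathds 1_{E_\infty}+\overline\lambda\,)u_\infty$ with a bounded right-hand side, $u_\infty>0$ in $\O$, and $u_\infty>0$ on $\partial E_\infty$; applying Hopf's lemma (valid since $\partial E_\infty$ is $\mathscr C^3$, so it satisfies an interior ball condition from the $\O\setminus\overline{E_\infty}$ side) to the positive function $u_\infty$ on either side of the $\mathscr C^3$ hypersurface $\partial E_\infty$ — or simply noting that a $\mathscr C^1$ function positive in a neighborhood with an interior minimum along a smooth surface would force $\nabla u_\infty(x_\infty)=0$, again contradicted by Hopf applied to $u_\infty - u_\infty(x_\infty)$ which is a positive supersolution near $x_\infty$ vanishing at $x_\infty$ — yields $-\partial_\nu u_\infty(x_\infty)>0$, a contradiction.

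The main obstacle is the uniformity: a pointwise Hopf lemma is immediate for each fixed smooth $E^*$, but extracting a constant $c_0$ valid for \emph{all} of $\mathcal E^*$ requires that the limiting interface $\partial E_\infty$ be genuinely $\mathscr C^3$ and interior to $\O$, and that the normals and normal derivatives converge — and this is exactly what Assumption \eqref{A1} (uniform $\mathscr C^3$ bound, $H_\infty<\infty$, $\operatorname{Per}_\infty<\infty$) is designed to provide. The delicate point to be careful about is showing the optimal interfaces stay uniformly away from $\partial\O$; I expect this to follow from the strict inequality $V_0<|\O|$ combined with the uniform perimeter bound and the fact that $\overline\lambda$ is attained only by genuine bang-bang potentials, but it deserves a clean argument rather than being taken for granted.
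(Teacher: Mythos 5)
Your strategy coincides with the paper's: both proofs combine the pointwise Hopf lemma (legitimate thanks to the $\mathscr C^3$ regularity in \eqref{A1}) with a compactness argument over $\mathcal E^*$, using the uniform $\mathscr C^{1,s}$ bounds of Lemma \ref{Le:Regularity} to get $\mathscr C^1$ convergence of eigenfunctions and Lemma \ref{Le:LSCeigenvalue} to identify the limit as another optimal spectral set. The only organizational difference is that the paper first proves a positive constant $c(E^*)$ for each fixed $E^*$ (by a minimizing sequence of points on $\partial E^*$) and then uniformizes over a minimizing sequence of sets, whereas you run a single contradiction argument in both variables at once; this is immaterial.

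There is, however, one concrete gap, precisely at the point you flag as "delicate": ruling out that the contact points $x_k\in\partial E_k^*$ drift to $\partial\O$ (where $u$ vanishes), and more generally locating the limit point $x_\infty$ on $\partial E_\infty$ and making sense of the convergence of normals. Your proposed substitutes do not close it: an $L^1$ bound on $\mathds 1_{E_k^*}-\mathds 1_{E_\infty}$ together with a uniform perimeter bound does not prevent a thin piece of $\partial E_k^*$ of negligible measure from reaching $\partial\O$, and the interior-ball radius coming from $H_\infty$ controls the geometry of $\partial E_k^*$ relative to $E_k^*$, not its distance to $\partial\O$. The ingredient you are missing is the first-order optimality condition \eqref{Eq:OptCon}: every optimal spectral set is a superlevel set $E^*=\{u_{E^*}>\mu(E^*)\}$ of its own eigenfunction with $\mu(E^*)>0$ (since $V_0<|\O|$). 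This gives at once (i) $u_{E^*}|_{\partial E^*}=\mu(E^*)>0$, so the first half of the minimum is handled without any limiting argument; (ii) $\overline{E^*}\subset\{u_{E^*}\geq\mu(E^*)\}$, a compact subset of $\O$, so the interface cannot touch $\partial\O$ (uniformly along the sequence, because $\mu(E_k^*)\to\mu(E_\infty^*)>0$ by $\mathscr C^0$ convergence); and (iii) the identification of limit points, since $u_{E_k^*}(x_k)=\mu(E_k^*)$ passes to the limit and places $x_\infty$ on the level set $\{u_{E_\infty^*}=\mu(E_\infty^*)\}=\partial E_\infty^*$, with the normals $-\nabla u_{E_k^*}/|\nabla u_{E_k^*}|$ converging by nondegeneracy. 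With \eqref{Eq:OptCon} inserted, your argument is complete and matches the paper's.
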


\begin{proof}[Proof of Lemma \ref{Le:Hopf}]
We first prove that for every $E^*\in \mathcal I^*$ there exists a constant $c(E^*)$ such that $$\min\left(u_{E^*}|_{\partial E^*}\,, \inf_{\partial E^*}\left(-\frac{\partial u_{E^*}}{\partial \nu}\right)_{|\partial E^*}\right)\geq c(E^*).$$

We first observe that, as the first eigenfunction $u_{E^*}$ was chosen to be non-negative, it follows from the maximum principle that 
$$u_{E^*}>0\text{ in }\O.$$

Furthermore, the optimality conditions for the spectral optimisation problem \eqref{Eq:E1} read \cite{KaoLouYanagida,LamboleyLaurainNadinPrivat,MazariQuantitative}: \begin{equation}\label{Eq:OptCon} \text{ There exists $\mu(E^*)$ such that }
E^*=\left\{u_{E^*}> \mu(E^*)\right\}.\end{equation} We comment on this in Remark \ref{ReBa}.

As a consequence of the fact that $V_0<|\O|$, we have 
$$u|_{\partial E^*}>0.$$

The regularity Assumption \eqref{A1} allows us to apply Hopf's Lemma to conclude that, for any $y\in \partial E^*$, there holds 
$$-\frac{\partial u_{E^*}}{\partial \nu}(y)>0.$$ To prove that there is a uniform lower bound, we then choose a minimising sequence $\{y_k\}_{k\in \N}\in (\partial E^*)^\N$. Since $\O$ is compact and $E^*\cap \partial \O=\emptyset$, there exists a closure point $y_\infty\in \O$ of this sequence. Passing to the limit in the equation
$$u_{E^*}(y_k)=\mu(E^*)$$ implies $u_{E^*}(y_\infty)=\mu(E^*)$ and thus $y_\infty\in \partial E^*$. The contradiction follows from the fact that, $u_{E^*}$ being $\mathscr C^{1,s}$ for any $s\in (0;1)$, 
$$\lim_{k\to \infty}\left(-\frac{\partial u_{E^*}}{\partial \nu}(y_k)\right)=\frac{-\partial u_{E^*}}{\partial \nu}(y_\infty)>0.$$

In order to make this lower bound uniform in $E^*\in \mathcal E^*$, let us define, for any $E^*\in \mathcal E^*$, $c(E^*)$ as the optimal constant in the previous step, that is 
$$c(E^*):=\min\left(u_{E^*}|_{E^*},\inf_{\partial E^*}\left(-\frac{\partial u_{E^*}}{\partial \nu}\right)\right),$$ and consider a minimising sequence $\{E_k^*\}_{k\in \N}\in \left(\mathcal E^*\right)^\N$ for $c(\cdot)$.

Let us consider a weak $L^\infty$-* limit $V_\infty^*\in \mathcal M$ of the sequence $\{V_k^*:=\mathds 1_{E_k^*}\}_{k\in \N}$. From Lemma \ref{Le:LSCeigenvalue} it follows that 
$$\lambda(V_\infty^*)\leq\lim_{k\to \infty}\lambda(V_k^*)=\inf_{V\in \mathcal M}\lambda(V)$$ so that $V_\infty^*$ is a solution of the spectral optimisation problem \eqref{Eq:E1}. We can thus write $V_\infty^*=\mathds 1_{E_\infty^*}$ for some $E_\infty^*\in \mathcal I^*$.  Furthermore, since the family $\{u_{E_k^*}\}_{k\in \N}$ is uniformly bounded in $\mathscr C^{1,s}$ for any $0<s<1$, the sequence $\{u_{E_k^*}\}_{k\in \N}$ converges strongly in $\mathscr C^1$ to $u_{E_\infty^*}$, so that 
$$\lim_{k\to \infty} c(E_k^*)=c(E_\infty^*)$$ up to a subsequence. Since $c(E_\infty^*)>0$, the conclusion follows.

\end{proof}

\begin{remark}\label{ReBa}[Comment on \eqref{Eq:OptCon}]\emph{Differentiating the map $m\mapsto \lambda(m)$ in the direction of a perturbation $h$ automatically yields the existence of a Lagrange multiplier $\mu(E^*)$ such that 
$$\{ u_{E^*}>\mu(E^*)\}\subset E^*\subset \{u_{E^*}\geq \mu(E^*)\}.$$ To derive \eqref{Eq:OptCon} (with the strict inequality sign) the easiest way to proceed is to observe, as is done in \cite{Chanillo2008,LamboleyLaurainNadinPrivat}, that \eqref{Eq:E1} is equivalent to minimising the first eigenvalue $$\tau(m)=\inf_{u\in W^{1,2}_0(\O)\,, \int_\O mu^2 \neq 0}\frac{\int_\O |\n u|^2}{\int_\O mu^2}$$ under the constraint $m\in \mathcal M$. In that case, the underlying eigenvalue equation is $-\Delta v=\tau(m)mv$, and it is then easier to see on this formulation that the level curve has Lebesgue measure zero.}\end{remark}


\subsection{First step: Quantitative inequality for normal deformation of optimal sets}


The first step is to obtain a local quantitative inequality at $E^*$, which, contrary to the results obtained in \cite{MazariQuantitative}, will be stated using the $W^{2,p}(\O,\R^d)$ norms of the perturbations. This setting is inspired by the synthetic presentation \cite{DambrineLamboley}, and is susceptible of being generalised to other local quantitative inequalities for optimal control problems. In that setting, we consider admissible vector fields, that is, vector fields $\Phi$ compactly supported in $\O$, and the deformed set 
$$E^*_{t\Phi}:=(Id+t\Phi)E^*.$$
We assume that $\Phi\in W^{2,p}(\O;\R^d)$, where $p$ is large enough so that the Sobolev embedding 
$$W^{2,p}(\Omega;\R^d)\hookrightarrow W^{1,\infty}(\O;\R^d)$$ holds. For instance, choosing $p>\frac{d}2$ ensures this.  We will however need higher regularity, as will be detailed through the next results.

\begin{proposition}\label{Pr:LocalStabilityUn}
Under Assumptions \eqref{A1}-\eqref{A2}, there exists $p\in (1;+\infty)$ as well as two constants $\eta\,,  \underline\alpha>0$ such that, for any $E^*\in \mathcal E^*$, for any $\Phi\in \mathcal X_1(E^*)\cap W^{2,p}(\O;\R^d)$ satisfying 
$$\Vert \Phi\Vert_{W^{2,p}(\O;\R^d)}\leq \eta$$ there holds 
\begin{equation}\tag{$\bold{I}_{local,E^*}$}\label{Eq:LocalStabilityUn}L_{E^*}(E^*_\Phi)-L_{E^*}(E^*)\geq \underline\alpha\left|E^*_\Phi\Delta E^*\right|^2\end{equation} where $L_{E^*}$ is the Lagrangian defined in \eqref{Eq:Lagrangian}.
If in particular $\operatorname{Vol}(E_\Phi^*)=\operatorname{Vol}(E)$ then \eqref{Eq:LocalStabilityUn} rewrites
$$\lambda(E_\Phi^*)-\lambda(E^*)\geq \underline\alpha\left|E^*_\Phi\Delta E^*\right|^2.$$
\end{proposition}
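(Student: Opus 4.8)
The strategy is the by-now classical scheme for deriving a quantitative ($L^1$-type, here $|\cdot\Delta\cdot|^2$) stability estimate from coercivity of the second shape derivative, adapted to our Lagrangian $L_{E^*}$ following the synthetic presentation of \cite{DambrineLamboley}. Fix $E^*\in\mathcal E^*$ and let $\Phi\in\mathcal X_1(E^*)\cap W^{2,p}(\O;\R^d)$ with $\|\Phi\|_{W^{2,p}}\leq\eta$, $\eta$ to be chosen small and uniform in $E^*$ (this uniformity is where Assumptions \eqref{A1}--\eqref{A2} and Lemma \ref{Le:Hopf} enter). First I would reduce to normal deformations: writing $E^*_\Phi=(\operatorname{Id}+\Phi)E^*$, one shows that up to replacing $\Phi$ by a vector field supported near $\partial E^*$ and pointing along $\nu$, with $W^{2,p}$ norm controlled by $\|\Phi\|_{W^{2,p}}$, the set $E^*_\Phi$ is unchanged modulo a small error, and the relevant quantity is $\varphi:=\Phi\cdot\nu$ on $\partial E^*$. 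This is the standard ``Hadamard--Zolésio'' structure theorem: since $L'_{E^*}(E^*)[\Psi]$ depends only on $\Psi\cdot\nu$ for admissible $\Psi$, and $E^*$ satisfies the first-order optimality condition (Assumption \eqref{A2}), the first-order term vanishes, and we are left to compare with the second-order term $\ell_{E^*}(\varphi,\varphi)$.

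Next I would Taylor-expand $t\mapsto L_{E^*}(E^*_{t\Phi})$ to second order. The first-order term is zero by optimality. The second-order term at $t=0$ equals $\frac12\ell_{E^*}(\varphi,\varphi)$, which by Assumption \eqref{A2} is bounded below by $\frac\alpha2\|\varphi\|_{L^2(\partial E^*)}^2$. The core estimate is then to control the remainder: $L_{E^*}(E^*_\Phi)-L_{E^*}(E^*)-\frac12\ell_{E^*}(\varphi,\varphi)=o(\|\varphi\|_{L^2(\partial E^*)}^2)$ as $\|\Phi\|_{W^{2,p}}\to0$, uniformly in $E^*\in\mathcal E^*$. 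This requires differentiating the eigenvalue twice along the flow, using the shape-derivative formulas for $\lambda$ and for $\operatorname{Vol}$ (both expressible as boundary integrals of $u_{E^*}$, $\partial_\nu u_{E^*}$ and the curvature $H_{E^*}$), together with: the uniform regularity of eigenfunctions (Lemma \ref{Le:Regularity}), the uniform lower bound $-\partial_\nu u_{E^*}\geq c_0>0$ and $u_{E^*}|_{\partial E^*}\geq c_0$ (Lemma \ref{Le:Hopf}), and the uniform curvature/perimeter bounds in Assumption \eqref{A1}. The $W^{2,p}$ (rather than merely $W^{1,\infty}$) regularity of $\Phi$ is needed precisely so that the transported eigenfunctions stay in a fixed ball of $W^{2,p}$, making the remainder terms — which involve second derivatives of $u$ and of the deformation — estimable by $\|\Phi\|_{W^{1,\infty}}\cdot(\text{bounded})\cdot\|\varphi\|_{L^2}^2$, hence $o(\|\varphi\|_{L^2}^2)$. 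Choosing $\eta$ small enough to absorb the remainder into half the coercivity constant yields $L_{E^*}(E^*_\Phi)-L_{E^*}(E^*)\geq\frac\alpha4\|\varphi\|_{L^2(\partial E^*)}^2$.

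Finally I would convert the boundary $L^2$ bound into the claimed $|E^*_\Phi\Delta E^*|^2$ bound. Here $|E^*_\Phi\Delta E^*|=\int_{\partial E^*}\int_0^{\varphi(x)}J(s,x)\,ds\,d\mathcal H^{d-1}(x)$ for a Jacobian factor $J$ bounded above and below uniformly (again by \eqref{A1} and the smallness of $\eta$), so $|E^*_\Phi\Delta E^*|\leq C\|\varphi\|_{L^1(\partial E^*)}\leq C\sqrt{\operatorname{Per}_\infty}\,\|\varphi\|_{L^2(\partial E^*)}$, whence $\|\varphi\|_{L^2(\partial E^*)}^2\geq c\,|E^*_\Phi\Delta E^*|^2$ with $c$ uniform in $E^*$. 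Combining gives \eqref{Eq:LocalStabilityUn} with $\underline\alpha:=\frac{\alpha c}{4}$; when $\operatorname{Vol}(E^*_\Phi)=\operatorname{Vol}(E^*)$ the Lagrangian term drops and $L_{E^*}(E^*_\Phi)-L_{E^*}(E^*)=\lambda(E^*_\Phi)-\lambda(E^*)$, giving the last assertion. \textbf{The main obstacle} I expect is the uniform-in-$E^*$ control of the second-order remainder: one must track every constant in the shape-differentiation of $\lambda$ through two derivatives and show it depends only on $\O$, $V_0$, $H_\infty$, $\operatorname{Per}_\infty$, $c_0$ and the Sobolev constants of Lemma \ref{Le:Regularity} — not on the particular optimal set — since the uniformity of $\underline\alpha$ (and of $\eta$) over $\mathcal E^*$ is exactly what the global argument in Step 4 will need.
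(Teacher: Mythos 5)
Your plan is essentially the paper's own argument: Taylor-expand $t\mapsto L_{E^*}(E^*_{t\Phi})$, kill the first-order term by optimality, get coercivity from \eqref{A2}, show the second-order remainder is $o(\|\Phi\cdot\nu\|_{L^2(\partial E^*)}^2)$ with a modulus of continuity uniform in $E^*\in\mathcal E^*$ (this is the paper's \eqref{Eq1}, proved term-by-term via geometric, continuity and trace estimates built on Lemmas \ref{Le:Regularity} and \ref{Le:Hopf} and Assumption \eqref{A1}), and finally pass from $\|\Phi\cdot\nu\|_{L^2(\partial E^*)}^2$ to $|E^*_\Phi\Delta E^*|^2$ via Cauchy–Schwarz and the uniform perimeter bound. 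You also correctly single out the uniformity of all constants over $\mathcal E^*$ as the crux, which is exactly what the paper's Propositions \ref{Pr:C1}–\ref{Pr:Con} are organized to deliver.
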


The proof of Proposition \ref{Pr:LocalStabilityUn} relies on several fine properties of first and second order shape derivatives. 

\subsection{Strategy of proof}


The idea of the proof follows the systematic presentation of \cite{DambrineLamboley}. Let $E^*\in \mathcal E^*$. For any vector field $\Phi\in \mathcal X_1(E^*)\cap W^{2,p}(\O;\R^d)$, define the function 
$$j_\Phi:[0;1]\ni t\mapsto L_{E^*} (E^*_{t\Phi}).$$
The fact that $E^*$ is a critical shape for $\lambda$ implies that, for any $\Phi\in \mathcal X_1(E^*)\cap W^{2,p}(\O;\R^d)$, 
\begin{equation}
j_{\Phi}'(0)=L_{E^*}(E^*)[\Phi]=0.   
\end{equation}
We can thus write 
\begin{align}
L_{E^*}(E^*_\Phi)-L_{E^*}(E^*)&=j_\Phi(1)-j_{\Phi}(0)
\\&=j_\Phi'(0)+\int_0^1 j_\Phi''(\xi)d\xi
\\&=j_\Phi''(0)+\int_0^1 \left(j_\Phi''(\xi)-j_\Phi''(0)\right)d\xi
\end{align}
From Assumption \eqref{A2} we have
$$j_\Phi''(0)\geq \mathscr \alpha\Vert \Phi \cdot \nu \Vert_{L^2(\partial E^*)}^2.$$
The crucial point is then to prove the following: there exists $p\in (1;+\infty)$, a constant $\mathscr M>0$ and a uniform modulus of continuity $\omega=\omega( \Vert \Phi\Vert_{W^{2,p}})$ that is, a function $\omega$ such that 
$$\forall \delta>0\,,  \exists \eta>0\,, \forall E^*\in \mathcal E^*\,, \forall \Phi\in \mathcal X_1(E^*)\cap W^{2,p}(\O;\R^d)\,, \Vert \Phi\Vert_{W^{2,p}}\leq \eta\Rightarrow 0\leq \omega( \Vert \Phi\Vert_{W^{2,p}})\leq \delta,$$

such that, whenever $ \Vert \Phi\Vert_{W^{2,p}}$ is small enough, for any $\xi \in (0;1)$,
\begin{equation}\label{Eq1}
\vert j_\Phi''(\xi)-j_\Phi''(0)\vert \leq \mathscr M \omega( \Vert \Phi\Vert_{W^{2,p}}) \Vert \Phi \cdot \nu \Vert_{L^2(\partial E^*)}^2.
\end{equation}

\begin{lemma}\label{Le:F}
Under Assumption \eqref{A1}, \eqref{Eq1} implies Proposition \ref{Pr:LocalStabilityUn}.
\end{lemma}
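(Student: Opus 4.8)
The plan is to show that the modulus-of-continuity estimate \eqref{Eq1} for the second derivative $j_\Phi''$, combined with the coercivity in \eqref{A2} and a standard lower bound relating $\Vert \Phi\cdot\nu\Vert_{L^2(\partial E^*)}$ to the measure of the symmetric difference $|E^*_\Phi \Delta E^*|$, immediately yields \eqref{Eq:LocalStabilityUn}. The argument is purely the ``end game'' of the Dambrine--Lamboley scheme once the hard analytic input \eqref{Eq1} is granted.

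First I would combine the Taylor expansion displayed just before the lemma with the two estimates on hand. Writing
\begin{equation}
L_{E^*}(E^*_\Phi)-L_{E^*}(E^*)=j_\Phi''(0)+\int_0^1\bigl(j_\Phi''(\xi)-j_\Phi''(0)\bigr)d\xi,
\end{equation}
the coercivity assumption \eqref{A2} gives $j_\Phi''(0)\geq \alpha\Vert \Phi\cdot\nu\Vert_{L^2(\partial E^*)}^2$, while \eqref{Eq1} controls the integral remainder by $\mathscr M\,\omega(\Vert\Phi\Vert_{W^{2,p}})\Vert\Phi\cdot\nu\Vert_{L^2(\partial E^*)}^2$. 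Hence
\begin{equation}
L_{E^*}(E^*_\Phi)-L_{E^*}(E^*)\geq \bigl(\alpha-\mathscr M\,\omega(\Vert\Phi\Vert_{W^{2,p}})\bigr)\Vert\Phi\cdot\nu\Vert_{L^2(\partial E^*)}^2.
\end{equation}
By the defining property of the modulus of continuity $\omega$, one may choose $\delta:=\alpha/(2\mathscr M)$ and obtain a corresponding $\eta>0$ (uniform in $E^*\in\mathcal E^*$, which is where the uniformity in \eqref{A2} and \eqref{A1} is used) such that $\Vert\Phi\Vert_{W^{2,p}}\leq\eta$ forces $\mathscr M\,\omega(\Vert\Phi\Vert_{W^{2,p}})\leq\alpha/2$, leaving
\begin{equation}
L_{E^*}(E^*_\Phi)-L_{E^*}(E^*)\geq \frac{\alpha}{2}\Vert\Phi\cdot\nu\Vert_{L^2(\partial E^*)}^2.
\end{equation}

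It then remains to bound $\Vert\Phi\cdot\nu\Vert_{L^2(\partial E^*)}^2$ from below by $c\,|E^*_\Phi\Delta E^*|^2$ for a constant $c>0$ uniform over $\mathcal E^*$. This is the classical geometric estimate: for small $t$ (here $t=1$ after the smallness reduction on $\Vert\Phi\Vert_{W^{2,p}}$), the symmetric difference $E^*_\Phi\Delta E^*$ is a thin tubular neighbourhood of $\partial E^*$ whose width at a point $y\in\partial E^*$ is comparable to $|\Phi(y)\cdot\nu(y)|$, so that by the coarea formula $|E^*_\Phi\Delta E^*|\lesssim \int_{\partial E^*}|\Phi\cdot\nu|\,d\mathcal H^{d-1}$, and then Cauchy--Schwarz together with the uniform perimeter bound $\operatorname{Per}_\infty<+\infty$ from \eqref{A1} gives
\begin{equation}
|E^*_\Phi\Delta E^*|\leq \sqrt{\operatorname{Per}_\infty}\,\Vert\Phi\cdot\nu\Vert_{L^2(\partial E^*)},
\end{equation}
i.e. $\Vert\Phi\cdot\nu\Vert_{L^2(\partial E^*)}^2\geq |E^*_\Phi\Delta E^*|^2/\operatorname{Per}_\infty$. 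Setting $\underline\alpha:=\alpha/(2\operatorname{Per}_\infty)$ finishes the proof; the last sentence of the proposition is the special case in which $\operatorname{Vol}(E^*_\Phi)=\operatorname{Vol}(E^*)$, where $L_{E^*}(E)=\lambda(E)-\Lambda(E^*)\operatorname{Vol}(E)$ reduces to $\lambda$ up to a constant.

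The part requiring care — and the only genuine obstacle — is the uniformity of all constants over the whole family $\mathcal E^*$: one must know that $\eta$ can be chosen independent of $E^*$ (guaranteed by the uniform modulus $\omega$ and the uniform $\alpha$ in \eqref{A2}), that the tubular-neighbourhood comparison has a width constant uniform in $E^*$ (guaranteed by the uniform $\mathscr C^3$ bound and the mean-curvature bound $H_\infty<+\infty$ in \eqref{A1}, which control the reach of $\partial E^*$), and that the perimeter is uniformly bounded (the $\operatorname{Per}_\infty$ clause of \eqref{A1}). Granting \eqref{Eq1}, which is the substance deferred to the subsequent analysis of the shape derivatives, the implication asserted in Lemma~\ref{Le:F} is exactly this short chain of inequalities.
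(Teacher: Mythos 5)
Your proof is correct and follows essentially the same route as the paper: absorb the remainder controlled by \eqref{Eq1} into the coercivity from \eqref{A2} by shrinking $\Vert\Phi\Vert_{W^{2,p}}$, apply Cauchy--Schwarz on $\partial E^*$ to pass from the $L^2$ norm of $\Phi\cdot\nu$ to the $L^1$ norm (hence to $|E^*_\Phi\Delta E^*|$), and invoke the uniform perimeter bound from \eqref{A1} to make the constant $\underline\alpha$ uniform over $\mathcal E^*$. If anything you are a touch more explicit than the paper on the choice of $\eta$ via $\omega$ and on the tubular-neighbourhood comparison between $|E^*_\Phi\Delta E^*|$ and $\int_{\partial E^*}|\Phi\cdot\nu|$, which the paper treats as an identity; and you have the correct power $\operatorname{Per}(E^*)^1$ from Cauchy--Schwarz, whereas the paper writes $\operatorname{Per}(E^*)^2$, apparently a typo.
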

\begin{proof}[Proof of Lemma \ref{Le:F}]

If \eqref{Eq1} holds then there exists a constant $\alpha_1>0$ such that, whenever $E^*\in \mathcal E^*$ and whenever $ \Vert \Phi\Vert_{W^{2,p}}$ is small enough we have, by the Cauchy-Schwarz inequality 
$$L_{E^*}(E^*_\Phi)-L_{E^*}(E^*)\geq \alpha_1\int_{\partial E^*}(\Phi \cdot \nu)^2\geq \frac{\alpha_1}{\operatorname{Per}(E^*)^2}\left(\int_{\partial E^*}|\Phi \cdot \nu|\right)^2=\alpha(E^*)|E_\Phi^*\Delta E^*|^2$$ with $\alpha(E^*)= \frac{\alpha_1}{\operatorname{Per}(E^*)^2}$. From Assumption \eqref{A1} there exists a uniform constant $\underline \alpha$  such that, for $p$ large enough, for any $E^*\in \mathcal E^*$ and whenever $ \Vert \Phi\Vert_{W^{2,p}}$ is small enough we have, by the Cauchy-Schwarz inequality 
$$L_{E^*}(E^*_\Phi)-L_{E^*}(E^*)\geq \underline\alpha |E_\Phi^*\Delta E^*|^2.$$ If we now assume that 
$$|E_\Phi^*|=|E^*|=V_0$$ we obtain 
$$\lambda(E_\Phi^*)-\lambda(E^*)\geq \underline\alpha |E_\Phi^*\Delta E^*|^2$$ under the same assumptions on $\Phi$.  
\end{proof}

The rest of this Section is devoted to the proof of \eqref{Eq1}, that is, to the proof of the existence of a uniform modulus of continuity. This is done in  Proposition \ref{Pr:Continuity} below. Since this proof is technical, we first lay out the main useful results.

\subsection{Technical material for Proposition \ref{Pr:LocalStabilityUn}}

\paragraph{First and second order shape derivatives for $L_{E^*}$}

We recall, without proof, the following expressions for the first order shape derivative of both the Lagrangian and of the eigenfunctions. First of all, from \cite{MazariQuantitative}, one has that the shape derivative $u'_{E,\Phi}$ satisfies
\begin{equation}\label{Eq:DeriveeUnU}\begin{cases}
-\Delta u'_{E,\Phi}=\lambda(E)u'_{E,\Phi}+\mathds 1_E u'_{E,\Phi}+\lambda'(E)[\Phi]u_E\text{ in }\Omega\,, 
\\
u'_{E,\Phi}=0\text{ on }\partial \Omega\,, 
\\ \left[\frac{u'_{E,\Phi}}{\partial \nu}\right]=-\left(\Phi\cdot\nu\right) u_E \text{ on }\partial E.\end{cases}\end{equation}  

As a a consequence, we obtain the following expression for the first order derivative \cite{MazariQuantitative}:
\begin{equation}\tag{$\bold{DL_{E^*}}$}\label{Eq:DeriveeUnL}
L'_{E^*}(E)[\Phi]=-\int_{\partial E} u_E^2\left(\Phi\cdot\nu\right)+u_{E^*}^2|_{\partial E^*}\int_{\partial E}\left(\Phi\cdot\nu\right).
\end{equation}
\begin{remark}[Computation of the Lagrange multiplier with the first order shape derivative]\label{Re:LagrangeMultiplier} \textit{We note that this method can be used to find the value of the Lagrange multiplier associated with the volume constraint. Indeed, if we were looking for the Lagrange multiplier $\Lambda(E^*)$ at $E^*$ associated with the volume constraint one would use that for any $\Phi\in W^{1,\infty}(\O;\R^d)$ one should have 
$$-\int_{\partial E^*} (u_E^*)^2\left(\Phi\cdot\nu\right)-\Lambda(E^*)\int_{\partial E^*}\left(\Phi\cdot\nu\right)=0$$ so that $u_{E^*}$ is constant on $\partial E^*$ and $\Lambda(E^*)=-u_{E^*}^2|_{\partial E^*}.$}\end{remark}

 $E^*$ being a critical shape for the Lagrangian $L_{E^*}$  we can henceforth, using \cite[Theorem 5.9.2]{HenrotPierre}, work under the assumption\begin{equation}\tag{$\bold T_0$} \text{$\Phi$ is orthogonal to $\partial E^*$ .}\end{equation} We will now write down the second order shape derivative for $E^*_{t\Phi}$ for any $t\in[0;1]$.  To write down this second order shape derivative of the Lagrangian in a tractable way, let us introduce 
\begin{align*}T_t:=Id+t\Phi\,, J_{\Sigma,t}(\Phi):=\det(\n T_t)\left| (^T\n T_t^{-1})\nu\right|\,,\\ J_{\O,t}(\Phi):=\det(\n t\Phi)\,, A_t:=J_{\O,t}(\Phi)(Id+t\n \Phi)^{-1}(Id+t^T\n \Phi)^{-1},\end{align*} and  $\hat u_t:=u_{E^*_{t\Phi}}\circ T_t$.
Then the function $\hat u'_{E^*_{t\Phi},\Phi}:=u'_{E,t\Phi}\circ T_t$ which we abbreviate as $\hat u'_t$, satisfies
\begin{equation}\begin{cases}-\nabla \cdot\left(A_t\n \hat u_t'\right)=J_{\O,t}(\lambda(E^*_{t\Phi})\hat u'_t+\mathds 1_{E^*}\hat u'_t+\lambda'(E^*_{t\Phi})\hat u_t )\text{ in }\O\,,\\ \left[A_t \frac{\partial \hat u_t'}{\partial \nu}\right]=-J_{\Sigma,t}\left(\Phi\cdot\nu\right) \hat u_t\,, 
\\ \hat u_t'=0\text{ on }\partial \O.\end{cases}\end{equation}

Furthermore, \cite[Lemma 6 and Equation 73]{MazariQuantitative} (a typo is present in Equation (73) of \cite{MazariQuantitative} given that a parenthesis is missing) gives the following expression for the second order shape-derivative of the Lagrangian:
\begin{multline}\tag{$\bold{D^2L_{E^*}}$}\label{Eq:DeriveeDeuxL}
L_{E^*}''(E)[\Phi,\Phi]=-2\int_{\partial E^*}J_{\Sigma,t}\hat u'_t\hat u_t\left(\Phi\cdot\nu\right)\\+\int_{\partial E^*}J_{\Sigma,t} \left\{\hat H_t (\hat u_t^2-\left.u_{E^*}^2\right|_{\partial E^*})-2\hat u_t\frac{\partial \hat u_t}{\partial \nu}\right\}\left(\Phi\cdot\nu\right)^2.
\end{multline}
where $H_t$ is the mean curvature of $E^*_{t\Phi}$ and $\hat H_t:=H_t\circ T_t.$

As a consequence, the remainder that needs to be estimated is 
\begin{align}
L_{E^*}''(E)[\Phi,\Phi]-L_{E^*}''[E^*][\Phi,\Phi]=&-2\int_{\partial E^*}\left\{J_{\Sigma,t}\hat u'_t\hat u_t-u_0'u_0\right\}\left(\Phi\cdot\nu\right)\tag{$\bold{R_0}(t,\Phi)$}\label{R0}
\\&-2\int_{\partial E^*}\left\{J_{\Sigma,t}\hat u_t\frac{\partial \hat u_t}{\partial \nu}-u_0\frac{\partial u_0}{\partial \nu}\right\}\left(\Phi\cdot\nu\right)^2\tag{$\bold{R_1}(t,\Phi)$}\label{R1}
\\&+\int_{\partial E^*}J_{\Sigma,t} \left\{\hat H_t (\hat u_t^2-\left.u_{0}^2\right|_{\partial E^*})\right\}\left(\Phi\cdot\nu\right)^2\tag{$\bold{R_2}(t,\Phi)$}\label{R2}
\end{align}
where, with a slight abuse of notation, we write $u_0=u_{E^*}$, $u_0'=u_{E^*}'[\Phi]$.

We now estimate the three terms \eqref{R0}-\eqref{R1}-\eqref{R2} separately, using a series of geometric, continuity and trace estimates.

\subsubsection{Geometric, continuity and trace estimates}
\paragraph{Geometric estimates}

We first gather some geometric estimates:

\begin{proposition}[Geometric estimates,{ \cite[Lemma 4.7]{DambrineLamboley}}]
For any $p\in (1;+\infty)$, for any $\Phi\in \mathcal X_1(E^*)\cap W^{2,p}(\O;\R^d)\cap W^{1,\infty}(\O;\R^d)$, there exists a constant $M_p$ independent of $E^*$ such that, for any $t\in (0;1)$:
\begin{itemize}
\item \begin{equation}\label{Eq:Jac}\Vert \hat J_{\Sigma,t}-1\Vert_{L^\infty(\partial E^*)}\leq M_p\Vert \Phi\cdot\nu\Vert_{W^{1,\infty}(\partial E^*)}.\end{equation}
\item \begin{equation}\label{Eq:Curv}\Vert \hat H_t-H_{E^*}\Vert_{L^p(\partial E^*)}\leq M_p \Vert \Phi \cdot\nu\Vert_{W^{2,p}(\partial E^*)}.
\end{equation}
\end{itemize}
\end{proposition}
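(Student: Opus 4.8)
\textbf{Proof proposal for the geometric estimates (Proposition on $J_{\Sigma,t}$ and $H_t$).}

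The plan is to reduce both estimates to standard facts about how Jacobians and curvatures transform under the near-identity diffeomorphism $T_t = \operatorname{Id} + t\Phi$, exactly as in \cite[Lemma 4.7]{DambrineLamboley}, but keeping track of the fact that the bound must be uniform over $E^* \in \mathcal E^*$. For the tangential Jacobian \eqref{Eq:Jac}, I would start from the classical formula $J_{\Sigma,t}(\Phi) = \det(\nabla T_t)\, \big| {}^T\!(\nabla T_t)^{-1}\nu\big|$, which is smooth in the matrix entries of $\nabla T_t = \operatorname{Id} + t\nabla\Phi$ as long as $\nabla T_t$ stays invertible; since $\Vert \Phi \Vert_{W^{2,p}} \leq \eta$ with $p$ above the Sobolev embedding threshold gives $\Vert \nabla \Phi \Vert_{L^\infty}$ small, $\nabla T_t$ is uniformly invertible and one Taylor-expands: $J_{\Sigma,t} - 1 = t\,\big(\operatorname{div}\Phi - \nabla\Phi\,\nu\cdot\nu\big) + O(t^2 \Vert \nabla\Phi\Vert_{L^\infty}^2)$, where the first-order term is precisely the tangential divergence of $\Phi$ on $\partial E^*$. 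All quantities appearing are controlled by $\Vert \Phi\cdot\nu\Vert_{W^{1,\infty}(\partial E^*)}$ together with the tangential derivatives of $\nu$, i.e. the second fundamental form of $\partial E^*$; here Assumption \eqref{A1} enters, since $H_\infty < +\infty$ and the $\mathscr C^3$ regularity bound the geometry of $\partial E^*$ uniformly, so the constant $M_p$ can be taken independent of $E^*$.

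For the curvature estimate \eqref{Eq:Curv}, I would use the transformation rule for the mean curvature under $T_t$: writing $\hat H_t = H_t \circ T_t$, one has an expression of the schematic form $\hat H_t = H_{E^*} + t\,\mathcal{L}_{E^*}(\Phi\cdot\nu) + (\text{lower-order in }t)$, where $\mathcal L_{E^*}$ is a second-order tangential differential operator (the Jacobi-type operator, involving $\Delta_{\partial E^*}(\Phi\cdot\nu)$ and curvature terms). The key point is that $\hat H_t - H_{E^*}$ is, up to smooth coefficients depending on $\nabla T_t$ and on the geometry of $\partial E^*$ (all bounded uniformly by \eqref{A1}), an expression that is linear in the second tangential derivatives of $\Phi$ and lower-order derivatives. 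Taking $L^p(\partial E^*)$ norms and using that the coefficients are in $L^\infty$ uniformly in $E^*$, one gets $\Vert \hat H_t - H_{E^*}\Vert_{L^p(\partial E^*)} \leq M_p \Vert \Phi\cdot\nu\Vert_{W^{2,p}(\partial E^*)}$. The reason one cannot hope for an $L^\infty$ bound here (only $L^p$) is that it would require $\Phi\cdot\nu \in W^{2,\infty}$, which is exactly why the whole argument is carried out at the $W^{2,p}$ level.

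\textbf{Main obstacle.} The genuinely delicate point is the \emph{uniformity of the constant $M_p$ over the family $\mathcal E^*$}: the Taylor expansions and curvature transformation formulas involve the induced metric, the second fundamental form, and their first tangential derivatives on $\partial E^*$, and a priori these could degenerate as one ranges over all optimal spectral sets. This is precisely what Assumption \eqref{A1} is designed to rule out — the uniform $\mathscr C^3$ control, the uniform bound $H_\infty$ on the mean curvature, and the uniform perimeter bound $\operatorname{Per}_\infty$ together guarantee that all the geometric coefficients entering the estimates live in a fixed bounded set of $L^\infty(\partial E^*)$ (and that the surface measures $\mathcal H^{d-1}\llcorner \partial E^*$ have uniformly bounded total mass). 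Once this is in place, the rest is a routine change-of-variables computation, and I would simply cite \cite[Lemma 4.7]{DambrineLamboley} for the precise bookkeeping, adding only the remark that \eqref{A1} makes the constants uniform across $\mathcal E^*$.
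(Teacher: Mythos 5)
Your proposal is essentially the same as what the paper does: the paper states the result as a direct citation of \cite[Lemma 4.7]{DambrineLamboley} with no proof given, and your sketch is a faithful reconstruction of what that lemma establishes (Taylor expansion of the tangential Jacobian, the linearized mean-curvature transformation rule, both controlled by derivatives of $\Phi\cdot\nu$ on $\partial E^*$), together with the one genuinely new observation the paper needs — that Assumption \eqref{A1} uniformly bounds the second fundamental form, surface measure, and $\mathscr C^3$ geometry so the constant $M_p$ can be taken independent of $E^*\in\mathcal E^*$. You have correctly isolated both the content of the cited lemma and the uniformity point, which is precisely what the paper relies on.
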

\paragraph{Continuity estimates}

The first continuity estimate is a basic application of elliptic regularity:
\begin{proposition}[{\cite[Claim 4]{MazariQuantitative}}]\label{Pr:C1}
For every $\delta>0$ there exists $\eta>0$ such that, for any $\Phi$ satisfying $\Vert \Phi\Vert_{W^{2,p}}\leq \eta$, for any $t\in (0;1)$, there holds
\begin{equation}
\Vert \hat u_t-u_0\Vert_{\mathscr C^1}\leq \delta.
\end{equation}
\end{proposition}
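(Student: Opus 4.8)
The plan is to establish the continuity estimate of Proposition~\ref{Pr:C1} by a standard argument in elliptic PDE, exploiting the uniform a priori bounds collected in Lemma~\ref{Le:Regularity} together with the stability of the eigenvalue under weak-$*$ convergence of the potentials (Lemma~\ref{Le:LSCeigenvalue}). The claim is essentially that the map $\Phi \mapsto \hat u_t$ (the transported eigenfunction) is continuous at $\Phi = 0$ in $\mathscr C^1$, uniformly in $t\in(0;1)$ and in $E^*\in\mathcal E^*$, when $\Phi$ is measured in $W^{2,p}$.

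First I would argue by contradiction: suppose the estimate fails. Then there exist $\delta_0>0$, a sequence $E_k^*\in\mathcal E^*$, times $t_k\in(0;1)$, and vector fields $\Phi_k\in\mathcal X_1(E_k^*)\cap W^{2,p}$ with $\Vert\Phi_k\Vert_{W^{2,p}}\to 0$ but $\Vert\hat u_{t_k}-u_{0,k}\Vert_{\mathscr C^1}\geq\delta_0$, where $u_{0,k}=u_{E_k^*}$ and $\hat u_{t_k}=u_{E_k^*{}_{t_k\Phi_k}}\circ T_{t_k}$. Write $m_k:=\mathds 1_{E_k^*}$ and $\tilde m_k:=\mathds 1_{(Id+t_k\Phi_k)E_k^*}$. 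Since $\Vert\Phi_k\Vert_{W^{1,\infty}}\to 0$ (Sobolev embedding, $p$ large), the deformed sets satisfy $|E_k^*{}_{t_k\Phi_k}\Delta E_k^*|\to 0$, hence $\tilde m_k-m_k\to 0$ in $L^1$ and a fortiori weak-$*$ in $L^\infty$ after passing to a subsequence where $m_k\rightharpoonup m_\infty$; in fact $\tilde m_k\rightharpoonup m_\infty$ too. The transported eigenfunction $\hat u_{t_k}$ solves an equation with coefficients $A_{t_k}\to\mathrm{Id}$, $J_{\O,t_k}\to 1$ uniformly (geometric estimates, (\ref{Eq:Jac}) and its bulk analogue), and right-hand side $\lambda(E_k^*{}_{t_k\Phi_k})\hat u_{t_k}$ with $\lambda(E_k^*{}_{t_k\Phi_k})$ bounded (all close to $\overline\lambda$); by Lemma~\ref{Le:Regularity} the family $\{\hat u_{t_k}\}$ is bounded in $W^{2,p}$ hence precompact in $\mathscr C^1$. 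Similarly $\{u_{0,k}\}$ is precompact in $\mathscr C^1$. Extracting, $\hat u_{t_k}\to u_\infty$ and $u_{0,k}\to v_\infty$ in $\mathscr C^1$, and passing to the limit in both eigenvalue equations shows that $u_\infty$ and $v_\infty$ are both the normalized positive first eigenfunction of $-\Delta-m_\infty$ — here one uses simplicity of the first eigenvalue and Lemma~\ref{Le:LSCeigenvalue} to identify the limiting eigenvalue as $\lambda(m_\infty)$ — so $u_\infty=v_\infty$, contradicting $\Vert\hat u_{t_k}-u_{0,k}\Vert_{\mathscr C^1}\geq\delta_0$.

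The two points that require a little care, and which I expect to be the main obstacle, are: (i) justifying that the limiting potential $m_\infty$ is the \emph{same} for the two sequences and that both transported/untransported eigenfunctions converge to the eigenfunction of $-\Delta-m_\infty$ — this is where the uniformity over $\mathcal E^*$ really enters, and where one must be careful that weak-$*$ convergence of $\tilde m_k$ and $m_k$ to the same limit is genuinely guaranteed by $\Vert\Phi_k\Vert_{W^{1,\infty}}\to 0$; and (ii) the change-of-variables bookkeeping, i.e.\ that the coefficients $A_{t_k}$, $J_{\O,t_k}$, $J_{\Sigma,t_k}$ converge to the identity/one strongly enough (in $L^\infty$, or $L^p$ for the curvature-type terms) to pass to the limit in the weak formulation, which follows from the geometric estimates (\ref{Eq:Jac})--(\ref{Eq:Curv}) once $\Vert\Phi_k\Vert_{W^{2,p}}\to 0$. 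Both are routine but must be spelled out; the normalization $\int u^2=1$ is preserved in the limit by $\mathscr C^1$ (hence $L^2$) convergence, which prevents the limits from degenerating to zero. This yields Proposition~\ref{Pr:C1}.
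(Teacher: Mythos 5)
Your compactness--contradiction argument is correct and does establish Proposition~\ref{Pr:C1}, but it is not the route the paper (via \cite[Claim~4]{MazariQuantitative}) has in mind. The paper treats this as a \emph{direct} elliptic-regularity estimate: one writes the PDE satisfied by $w := \hat u_t - u_0$ (a divergence-form equation whose source and coefficient perturbations are controlled by $A_t-\operatorname{Id}$, $J_{\O,t}-1$, $\lambda(E^*_{t\Phi})-\overline\lambda$, all $O(\Vert\Phi\Vert_{W^{2,p}})$), applies $W^{2,p}$ elliptic estimates, and embeds $W^{2,p}\hookrightarrow\mathscr C^1$. This direct computation is what then yields the \emph{quantitative} bounds of Propositions~\ref{PR2} and~\ref{PR3} (which are needed later in the proof of Proposition~\ref{Pr:Con}), whereas your soft compactness argument gives only the qualitative $\delta$--$\eta$ statement of Proposition~\ref{Pr:C1} and would have to be redone from scratch for the quantitative lemmas. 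Both approaches are valid for the statement at hand; the direct one simply buys more.

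Two technical points in your write-up should be spelled out. First, the deformed potential $\tilde m_k=\mathds 1_{(Id+t_k\Phi_k)E^*_k}$ does \emph{not} lie in $\mathcal M$ (the volume constraint $\int V=V_0$ is lost under the deformation), so you cannot literally invoke Lemma~\ref{Le:Regularity} or Lemma~\ref{Le:LSCeigenvalue} for it; you need to observe that both proofs only use $0\leq V\leq 1$ and so extend verbatim. Second, the $W^{2,p}$ bound on $\hat u_{t_k}=u_{E^*_{t_k\Phi_k}}\circ T_{t_k}$ requires a chain-rule argument beyond Lemma~\ref{Le:Regularity}: that lemma bounds $u_{E^*_{t_k\Phi_k}}$ in $W^{2,p}$, and one must then check that composition with $T_{t_k}$, whose $W^{2,p}$ and $W^{1,\infty}$ norms are uniformly bounded and whose inverse is uniformly bi-Lipschitz, preserves the $W^{2,p}$ bound. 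Both are routine, but as written your reliance on the two lemmas is slightly misattributed. The normalization $\int\hat u_{t_k}^2\to 1$ (via the Jacobian $J_{\O,t_k}^{-1}\to 1$) is handled correctly and is indeed what prevents the limit from degenerating, and your use of the simplicity of the first eigenvalue to identify the two limits is the right closing step.
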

\begin{proposition}[{\cite[Lemma 4.8, Lemma 4.9]{DambrineLamboley}}]\label{PR2}
Let $p>d$. There exists a constant $\tilde M_p>0$ such that, for any $t\in (0;1)$,
\begin{equation}
\Vert \hat u_t-u_0\Vert_{W^{1,p}(\O)}\leq \tilde M_p \Vert \Phi\Vert_{W^{2,p}(\O)},
\end{equation}
and there also exists $\eta_0>0$ such that, if $\Vert \Phi\Vert_{W^{2,p}(\O)}\leq \eta_0$, then for any $t\in (0;1)$
\begin{equation}\Vert \hat u_t'-u_0'\Vert_{W^{1,2}(\O)}\leq \tilde M_p \Vert \Phi\cdot\nu\Vert_{W^{\frac12,2}(\partial E^*)} \Vert \Phi\cdot\nu\Vert_{W^{2,p}(\partial E^*)}.\end{equation}
All these constants are independent of $E^*\in \mathcal E^*$.
\end{proposition}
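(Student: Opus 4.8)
The plan is to treat both estimates by a single scheme: one writes the quantity to be controlled as the solution of a uniformly elliptic problem (a plain Dirichlet problem for the first estimate, a transmission problem across $\partial E^*$ for the second) whose data are small, and then combines $L^p$ elliptic regularity with a careful treatment of the kernel of the linearised operator. Throughout, the normalisation $\int_\O u_V^2=1$ (and its differentiated form $\int_\O u_Vu_V'[\Phi]=0$) is used to pin the otherwise undetermined components; Lemma \ref{Le:Regularity} and the fact that $u_{E^*}$ equals a constant $\mu(E^*)\ge c_0>0$ on $\partial E^*$ (Lemma \ref{Le:Hopf} and \eqref{Eq:OptCon}) provide uniform bounds on all data; and Assumption \eqref{A1} together with compactness arguments in the spirit of Lemma \ref{Le:Hopf} will make every constant uniform in $E^*\in\mathcal E^*$.

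For the first estimate, transporting \eqref{Eq:Eig} through $T_t$ and using $\mathds 1_{E^*_{t\Phi}}\circ T_t=\mathds 1_{E^*}$ shows that $\hat u_t$ solves $-\nabla\cdot(A_t\nabla\hat u_t)=J_{\O,t}\big(\lambda(E^*_{t\Phi})+\mathds 1_{E^*}\big)\hat u_t$ in $\O$, $\hat u_t=0$ on $\partial\O$, $\int_\O J_{\O,t}\hat u_t^2=1$; subtracting $-\Delta u_0=(\lambda(E^*)+\mathds 1_{E^*})u_0$ one finds that $w_t:=\hat u_t-u_0$ solves
$$-\nabla\cdot(A_t\nabla w_t)-\big(\lambda(E^*)+\mathds 1_{E^*}\big)w_t=\nabla\cdot\big((A_t-\operatorname{Id})\nabla u_0\big)+(J_{\O,t}-1)\big(\lambda(E^*_{t\Phi})+\mathds 1_{E^*}\big)\hat u_t+\big(\lambda(E^*_{t\Phi})-\lambda(E^*)\big)\hat u_t$$
with $w_t=0$ on $\partial\O$. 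The right-hand side is $O(\|\Phi\|_{W^{2,p}(\O)})$ in $W^{-1,p}(\O)$: indeed $\|A_t-\operatorname{Id}\|_\infty+\|J_{\O,t}-1\|_\infty\lesssim\|\Phi\|_{W^{1,\infty}}\lesssim\|\Phi\|_{W^{2,p}}$ for $p>d$ by the geometric estimates and Sobolev embedding, $\nabla u_0$ and $\hat u_t$ are uniformly bounded by Lemma \ref{Le:Regularity}, and $|\lambda(E^*_{t\Phi})-\lambda(E^*)|\lesssim|E^*_{t\Phi}\Delta E^*|\lesssim\|\Phi\|_{W^{1,\infty}}\operatorname{Per}_\infty$ by the Rayleigh quotient. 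The operator $-\Delta-\lambda(E^*)-\mathds 1_{E^*}$ has one-dimensional kernel $\R u_0$, so we split $w_t=c_tu_0+w_t^\perp$ with $w_t^\perp\perp_{L^2}u_0$: the orthogonal part is estimated by inverting on $u_0^\perp$, whose norm is controlled by the reciprocal of the gap between the first two Dirichlet eigenvalues of $-\Delta-\mathds 1_{E^*}$ — a gap bounded below uniformly on $\mathcal E^*$ by a compactness argument parallel to the proof of Lemma \ref{Le:Hopf} — while $|c_t|=\big|\int_\O u_0w_t\big|$ is controlled by the normalisation via $2\int_\O u_0w_t=-\int_\O(J_{\O,t}-1)\hat u_t^2-\int_\O w_t^2$. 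Since $\|w_t\|_{\mathscr C^1}$ is small by Proposition \ref{Pr:C1}, the quadratic and cross terms are absorbed and one gets $\|w_t\|_{W^{1,2}}\lesssim\|\Phi\|_{W^{2,p}}$; a standard Sobolev bootstrap, using Calderón--Zygmund estimates for the Dirichlet problem with the Hölder coefficients $A_t$ (whose $\mathscr C^{0,\alpha}$ norm and ellipticity constant are uniform), then upgrades this to $\|w_t\|_{W^{1,p}(\O)}\lesssim\|\Phi\|_{W^{2,p}(\O)}$.

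For the second estimate, from the equation satisfied by $\hat u_t'$ recalled above and from \eqref{Eq:DeriveeUnU} taken at $E=E^*$ (so that $\hat u_0'=u_0'$), the difference $z_t:=\hat u_t'-u_0'$ solves a transmission problem $-\nabla\cdot(A_t\nabla z_t)-(\lambda(E^*)+\mathds 1_{E^*})z_t=G_t$ in $\O\setminus\partial E^*$, $z_t=0$ on $\partial\O$, with jump $[A_t\partial_\nu z_t]=-(\Phi\cdot\nu)\big(J_{\Sigma,t}\hat u_t-(A_t\nu\cdot\nu)u_0\big)$ on $\partial E^*$, where $G_t$ gathers the perturbative terms coming from $A_t-\operatorname{Id}$, $J_{\O,t}-1$, $\lambda(E^*_{t\Phi})-\lambda(E^*)$ and the difference of shape derivatives $\lambda'(E^*_{t\Phi})[\Phi]-\lambda'(E^*)[\Phi]$. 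The point is the factorisation of the smallness: $u_0'$ (hence, after a crude a priori bound, $\hat u_t'$) is linear in $\Phi$, so the $W^{1,2}$ estimate for the transmission problem gives $\|u_0'\|_{W^{1,2}}\lesssim\|(\Phi\cdot\nu)u_0\|_{H^{-\frac12}(\partial E^*)}\lesssim\|\Phi\cdot\nu\|_{W^{\frac12,2}(\partial E^*)}$, while every perturbative factor entering $G_t$ and the jump — $\|A_t-\operatorname{Id}\|_\infty$, $\|J_{\Sigma,t}-1\|_\infty$, $\|\hat u_t-u_0\|_{\mathscr C^0}$ (bounded by the first estimate via $W^{1,p}\hookrightarrow\mathscr C^0$), and the continuity modulus of $E\mapsto\lambda'(E)[\Phi]$ — is $O(\|\Phi\cdot\nu\|_{W^{2,p}(\partial E^*)})$ by trace and Sobolev embeddings for $p>d$; hence the data of the $z_t$-problem are $O\big(\|\Phi\cdot\nu\|_{W^{\frac12,2}(\partial E^*)}\,\|\Phi\cdot\nu\|_{W^{2,p}(\partial E^*)}\big)$. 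The kernel is handled as before by splitting $z_t=c_t'u_0+z_t^\perp$: the orthogonal part is estimated by inverting the projected transmission operator (same uniform gap), and $c_t'=\int_\O u_0z_t$ is pinned by differentiating the normalisation, which yields $\int_\O J_{\O,t}\hat u_t\hat u_t'=0=\int_\O u_0u_0'$ and hence $\big|\int_\O u_0z_t\big|\le\|u_0-J_{\O,t}\hat u_t\|_{L^2}\|\hat u_t'\|_{L^2}$, again of the required order. Absorbing the term $\|\operatorname{Id}-A_t\|_\infty\|z_t\|_{W^{1,2}}$ for $\|\Phi\|_{W^{2,p}}\le\eta_0$ then concludes.

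The main obstacle is the consistent handling of the kernel $\R u_0$ of the linearised operator $-\Delta-\lambda(E^*)-\mathds 1_{E^*}$ — which forces the use of the normalisation constraints and of a spectral gap that must be shown \emph{uniform} over $\mathcal E^*$ by a compactness argument of the type used in Lemma \ref{Le:Hopf} — together with, for the second estimate, the $W^{1,2}$ theory of the transmission problem across the interface $\partial E^*$, whose regularity is only the one granted by \eqref{A1}, all the while keeping scrupulous track of the bilinear structure $\|\Phi\cdot\nu\|_{W^{\frac12,2}(\partial E^*)}\cdot\|\Phi\cdot\nu\|_{W^{2,p}(\partial E^*)}$, since it is precisely this quadratic gain, and not a merely linear bound, that the argument of Proposition \ref{Pr:LocalStabilityUn} requires in order to close. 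Everything else is routine elliptic regularity, provided one checks at each step that all constants depend only on $\O$, $p$, $V_0$ and the uniform bounds supplied by \eqref{A1} and Lemma \ref{Le:Regularity}.
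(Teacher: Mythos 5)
The paper does not actually prove this proposition --- it cites \cite[Lemmas 4.8, 4.9]{DambrineLamboley} and declares the proof a ``direct adaptation'' --- and your argument is a correct reconstruction of precisely that adaptation: transport by $T_t$, subtraction of the reference equation, elliptic (resp.\ transmission) estimates for the difference with the one-dimensional kernel $\R u_0$ pinned by the normalisation $\int_\O J_{\O,t}\hat u_t^2=1$, a spectral gap made uniform over $\mathcal E^*$ by the same compactness device as in Lemma \ref{Le:Hopf}, and the bilinear factorisation of the data in the second estimate. The one point you inherit from the statement rather than resolve is the passage from the bulk norms $\Vert \Phi\Vert_{W^{1,\infty}(\O)}$ (which control $A_t-\operatorname{Id}$ and $J_{\O,t}-1$) to the boundary norms $\Vert \Phi\cdot\nu\Vert_{W^{2,p}(\partial E^*)}$ appearing on the right-hand side; this is harmless under the normal-extension convention $(\bold T_0)$ adopted just before, but deserves to be said explicitly.
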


This proposition is proved by direct adaptation of \cite[Lemma 4.8, Lemma 4.9]{DambrineLamboley}. We continue with a stronger estimate:
\begin{proposition}[Derivative estimate, {\cite[Estimates (75)-(76)]{MazariQuantitative}}]\label{PR3}
There exists a constant $M_p'>0$ and $\eta_0'$ such that, if $\Vert \Phi\Vert_{W^{2,p}(\O;\R^d)}\leq \eta_0'$ then for any $t\in (0;1)$, 
\begin{equation}
\Vert \hat u_t'\Vert_{W^{1,2}_0(\O)}\leq M_p'\Vert \Phi\cdot\nu\Vert_{L^2(\partial E^*)}.
\end{equation}
Furthermore, for every $\delta_0>0$, there exists $\eta_0''>0$ such that, if $\Vert \Phi\Vert_{\mathscr C^1(\O)}\leq \eta_0''$, then for any $t\in (0;1)$
\begin{equation}
\Vert \hat u_t'-u_0'\Vert_{W^{1,2}_0(\O)}\leq \delta_0\Vert \Phi\cdot\nu\Vert_{L^2(\partial E^*)},
\end{equation}
All these constants can be chosen independent of $E^*$.
\end{proposition}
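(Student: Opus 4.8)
The plan is to follow the strategy of \cite[Estimates (75)-(76)]{MazariQuantitative}, adapting it to the present, more general geometric setting. The starting point is the transported shape-derivative equation for $\hat u_t'$ recorded above, namely
\begin{equation*}
\begin{cases}-\nabla \cdot\left(A_t\n \hat u_t'\right)=J_{\O,t}\left(\lambda(E^*_{t\Phi})\hat u'_t+\mathds 1_{E^*}\hat u'_t+\lambda'(E^*_{t\Phi})[\Phi]\hat u_t\right)\text{ in }\O\,,\\ \left[A_t \frac{\partial \hat u_t'}{\partial \nu}\right]=-J_{\Sigma,t}\left(\Phi\cdot\nu\right) \hat u_t\,, \\ \hat u_t'=0\text{ on }\partial \O.\end{cases}
\end{equation*}
First I would test this equation against $\hat u_t'$ itself and integrate by parts, picking up the boundary (jump) term on $\partial E^*$ equal to $\int_{\partial E^*}J_{\Sigma,t}(\Phi\cdot\nu)\hat u_t\hat u_t'$. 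Using the uniform ellipticity and boundedness of $A_t$ (which holds once $\Vert\Phi\Vert_{W^{2,p}}$ is small, by the geometric estimates \eqref{Eq:Jac} and the definition of $A_t$), the left side controls $\Vert\hat u_t'\Vert_{W^{1,2}_0(\O)}^2$ from below up to a constant. On the right side, the volume term $\int_\O J_{\O,t}(\lambda(E^*_{t\Phi})+\mathds 1_{E^*})(\hat u_t')^2$ is absorbed provided $\lambda(E^*_{t\Phi})$ stays in a bounded range (which follows from Lemma \ref{Le:LSCeigenvalue} together with the upper bound $\lambda(V)\le\lambda(0)$, valid for all $V\in\mathcal M$, and continuity of $\lambda$ along the flow), and since $\lambda(E^*_{t\Phi})+1$ is below the second eigenvalue for small deformations a spectral-gap argument gives the absorption with room to spare. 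The term involving $\lambda'(E^*_{t\Phi})[\Phi]\hat u_t$ is handled by noting $|\lambda'(E^*_{t\Phi})[\Phi]|\lesssim \Vert\Phi\cdot\nu\Vert_{L^2(\partial E^*)}$ from the shape-derivative formula and a trace estimate on $\hat u_t$ (uniformly bounded in $\mathscr C^{1,s}$ by Lemma \ref{Le:Regularity} and Proposition \ref{Pr:C1}). The boundary term is estimated by Cauchy–Schwarz on $\partial E^*$: $\int_{\partial E^*}J_{\Sigma,t}(\Phi\cdot\nu)\hat u_t\hat u_t'\le C\Vert\Phi\cdot\nu\Vert_{L^2(\partial E^*)}\Vert\hat u_t'\Vert_{L^2(\partial E^*)}$, and then $\Vert\hat u_t'\Vert_{L^2(\partial E^*)}\lesssim\Vert\hat u_t'\Vert_{W^{1,2}(\O)}$ via the trace theorem (with a constant uniform in $E^*$ thanks to \eqref{A1} and $\operatorname{Per}_\infty<+\infty$). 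Collecting terms, $\Vert\hat u_t'\Vert_{W^{1,2}_0(\O)}^2\le \varepsilon\Vert\hat u_t'\Vert_{W^{1,2}_0(\O)}^2+C\Vert\Phi\cdot\nu\Vert_{L^2(\partial E^*)}^2$, which after absorbing $\varepsilon$ gives the first inequality with $M_p'$ independent of $E^*$.

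For the second inequality I would set $w_t:=\hat u_t'-u_0'$ and subtract the equation for $u_0'$ (the $t=0$ case, i.e. \eqref{Eq:DeriveeUnU} with $A_0=\operatorname{Id}$, $J_{\Sigma,0}=J_{\O,0}=1$, $\hat u_0=u_0$) from that for $\hat u_t'$. The result is an equation of the same structure for $w_t$ with right-hand side and jump data consisting entirely of ``error'' terms: differences $A_t-\operatorname{Id}$, $J_{\O,t}-1$, $J_{\Sigma,t}-1$, $\lambda(E^*_{t\Phi})-\lambda(E^*)$, $\lambda'(E^*_{t\Phi})[\Phi]-\lambda'(E^*)[\Phi]$, and $\hat u_t-u_0$, each multiplied either by one of the already-controlled quantities ($\hat u_t'$, $u_0'$, $u_0$, $\hat u_t$) or by $\Phi\cdot\nu$. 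Each such coefficient difference is $O(\omega(\Vert\Phi\Vert_{\mathscr C^1}))$ for a modulus $\omega$ with $\omega(0^+)=0$: for the Jacobian/matrix terms this is the geometric estimate \eqref{Eq:Jac}, for $\hat u_t-u_0$ it is Proposition \ref{Pr:C1}, for the eigenvalue it is continuity of $\lambda$ and $\lambda'$ along the flow. Repeating the energy estimate just described, but now for $w_t$, and using the already-established bounds $\Vert\hat u_t'\Vert_{W^{1,2}_0}\le M_p'\Vert\Phi\cdot\nu\Vert_{L^2(\partial E^*)}$ and $\Vert u_0'\Vert_{W^{1,2}_0}\le M_p'\Vert\Phi\cdot\nu\Vert_{L^2(\partial E^*)}$, every error term is bounded by $\omega(\Vert\Phi\Vert_{\mathscr C^1})\,\Vert\Phi\cdot\nu\Vert_{L^2(\partial E^*)}\,\Vert w_t\Vert_{W^{1,2}_0}$ or by $\omega(\Vert\Phi\Vert_{\mathscr C^1})\,\Vert\Phi\cdot\nu\Vert_{L^2(\partial E^*)}^2$; absorbing the first type yields $\Vert w_t\Vert_{W^{1,2}_0(\O)}\le C\,\omega(\Vert\Phi\Vert_{\mathscr C^1})\,\Vert\Phi\cdot\nu\Vert_{L^2(\partial E^*)}$, and choosing $\Vert\Phi\Vert_{\mathscr C^1}\le\eta_0''$ small enough that $C\omega(\eta_0'')\le\delta_0$ finishes the proof.

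The main obstacle I anticipate is \emph{uniformity in $E^*\in\mathcal E^*$} of all constants — the ellipticity/coercivity constant of $A_t$, the trace constant on $\partial E^*$, the spectral gap $\lambda_2(E^*)-\lambda(E^*)$, and the modulus $\omega$. This is where Assumption \eqref{A1} is essential: the uniform $\mathscr C^3$ bound on the boundaries, the uniform curvature bound $H_\infty$, and the uniform perimeter bound $\operatorname{Per}_\infty$ together give a uniform trace theorem and prevent degeneration of the geometry; the uniform lower bound $c_0$ on $u_{E^*}$ and $-\partial_\nu u_{E^*}$ from Lemma \ref{Le:Hopf}, together with the uniform $\mathscr C^{1,s}$ bounds of Lemma \ref{Le:Regularity}, and the compactness argument of Lemma \ref{Le:LSCeigenvalue} (any sequence in $\mathcal E^*$ accumulates, after passing to a subsequence, at an element of $\mathcal E^*$ with $\mathscr C^1$-convergence of eigenfunctions), upgrade any pointwise-in-$E^*$ estimate to a uniform one. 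A secondary technical point is the precise regularity index $p$: one needs $p$ large enough that $W^{2,p}(\O;\R^d)\hookrightarrow W^{1,\infty}$ and that the curvature estimate \eqref{Eq:Curv} and the trace estimates close, so $p>d$ suffices and should be fixed once and for all at the start.
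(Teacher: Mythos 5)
The paper does not actually prove Proposition \ref{PR3}: it imports it from \cite[Estimates (75)--(76)]{MazariQuantitative} and only remarks that it is obtained there by a bootstrap. So your proposal has to stand on its own, and its overall architecture (energy estimate on the transported equation for $\hat u_t'$, then the same estimate for the difference $w_t=\hat u_t'-u_0'$ with coefficient differences controlled by \eqref{Eq:Jac}, Proposition \ref{Pr:C1} and continuity of $\lambda$, $\lambda'$, with uniformity in $E^*$ extracted from \eqref{A1} and Lemma \ref{Le:Hopf}) is the natural one and is consistent with the cited source.

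There is, however, a genuine gap in the absorption step. The bilinear form you test against, namely
$Q_t(v)=\int_\O A_t\n v\cdot\n v-\int_\O J_{\O,t}\bigl(\lambda(E^*_{t\Phi})+\mathds 1_{E^*}\bigr)v^2$, is \emph{not} coercive on $W^{1,2}_0(\O)$: at $t=0$ it is exactly the quadratic form of $-\Delta-\mathds 1_{E^*}-\lambda(E^*)$, whose lowest eigenvalue is $0$ with eigenfunction $u_0$. Hence $Q_t(\hat u_t')$ does not control $\Vert\hat u_t'\Vert_{W^{1,2}_0}^2$, and the inequality $\Vert\hat u_t'\Vert^2\le\varepsilon\Vert\hat u_t'\Vert^2+C\Vert\Phi\cdot\nu\Vert_{L^2(\partial E^*)}^2$ cannot be reached by absorbing the zero-order term as you describe. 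Your justification (``$\lambda(E^*_{t\Phi})+1$ is below the second eigenvalue'') is not the relevant statement, and read as a comparison with the Dirichlet Laplacian it is false, since $\lambda(V)\geq\lambda_1^D(\O)-1$ for every $V\in\mathcal M$, so $\lambda+1\geq\lambda_1^D(\O)$. What is missing is (i) the orthogonality relation obtained by differentiating the normalisation $\int_\O u_{E}^2=1$, i.e.\ $\int_\O u_{E^*_{t\Phi}}u'_{E^*_{t\Phi},\Phi}=0$, equivalently $\int_\O J_{\O,t}\hat u_t\hat u_t'=0$, which removes the degenerate direction, and (ii) a spectral gap $\lambda_2-\lambda_1\geq\gamma>0$ \emph{uniform over} $\mathcal E^*$ and stable under small $W^{2,p}$ deformations, on whose orthogonal complement $Q_t$ is genuinely coercive; this uniform gap needs its own argument (e.g.\ $L^1$-compactness of $\mathcal E^*$ from the perimeter bound plus continuity of $\lambda_2$, in the spirit of Lemma \ref{Le:LSCeigenvalue}). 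The same repair is needed in your second step for $w_t$, where the orthogonality only holds up to error terms that must themselves be shown to be $O\bigl(\omega(\Vert\Phi\Vert_{\mathscr C^1})\Vert\Phi\cdot\nu\Vert_{L^2(\partial E^*)}\bigr)$. With these two ingredients added, the rest of your plan goes through.
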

This estimate is, in \cite{MazariQuantitative}, obtained via a bootstrap method. From Propositions \ref{PR2} and \ref{PR3}, we obtain 
\begin{proposition}\label{Pr:Continuity}
For any $p>d$ there exists $\eta_p>0$ and $\mathscr M_p>0$ such that, if $\Vert \Phi\Vert_{W^{2,p}(\O;\R^d)}\leq\eta_p$, then for any $t\in (0;1)$,
\begin{equation}
\Vert \hat u_t-u_0\Vert_{W^{1,2}(\O)}\leq \mathscr  M_p \Vert \Phi \cdot \nu \Vert_{L^2(\partial E^*)}.
\end{equation}
\end{proposition}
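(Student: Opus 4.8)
The plan is to deduce the estimate by integrating, along the path $s\mapsto\hat u_s$, the two bounds just recorded: the \emph{uniform} $W^{1,p}$--continuity of Proposition~\ref{PR2} and the \emph{sharp} derivative bound of Proposition~\ref{PR3}. Fix $p>d$ and $E^*\in\mathcal E^*$, and assume $\Vert\Phi\Vert_{W^{2,p}(\O;\R^d)}\le\eta_p$, where $\eta_p$ is taken below the various smallness thresholds appearing in Propositions~\ref{PR2}--\ref{PR3} and in the geometric estimates \eqref{Eq:Jac}--\eqref{Eq:Curv}. First I would write, by the fundamental theorem of calculus,
$$\hat u_t-u_0=\int_0^t\frac{d}{ds}\,\hat u_s\,ds,$$
and differentiate the definition $\hat u_s=u_{E^*_{s\Phi}}\circ T_s$ in $s$: since $\hat u_s'=u'_{E,s\Phi}\circ T_s$ is exactly the pulled-back shape derivative, this gives $\tfrac{d}{ds}\hat u_s=\hat u_s'+\rho_s$, the remainder $\rho_s=\big((\operatorname{Id}+s\n\Phi)^{-1}\Phi\big)\cdot\n\hat u_s$ being produced by the motion of $T_s$ itself.

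Then I would estimate the two pieces of $\tfrac{d}{ds}\hat u_s$ separately, uniformly in $s\in(0;1)$ and in $E^*\in\mathcal E^*$. For the shape-derivative term, Proposition~\ref{PR3} gives at once $\Vert\hat u_s'\Vert_{W^{1,2}_0(\O)}\le M_p'\Vert\Phi\cdot\nu\Vert_{L^2(\partial E^*)}$, so this term contributes at most $M_p'\Vert\Phi\cdot\nu\Vert_{L^2(\partial E^*)}$ after integration over $[0;t]$. For $\rho_s$ I would use the uniform $W^{1,p}(\O)$--bound $\Vert\hat u_s\Vert_{W^{1,p}}\le\Vert u_0\Vert_{W^{1,p}}+\tilde M_p\Vert\Phi\Vert_{W^{2,p}}$ of Proposition~\ref{PR2} (hence a uniform $\mathscr C^{0,\alpha}$ bound on $\n\hat u_s$, as $p>d$) together with Lemma~\ref{Le:Regularity}; the point is to bound $\Vert\rho_s\Vert_{W^{1,2}}$ not merely by $\Vert\Phi\Vert_{W^{1,\infty}}$ but by $\Vert\Phi\cdot\nu\Vert_{L^2(\partial E^*)}$, which one achieves using that $\partial E^*$ is a level set of $u_0$ (so $\n u_0=\tfrac{\partial u_0}{\partial\nu}\nu$ there, by \eqref{Eq:OptCon}), the trace and geometric estimates \eqref{Eq:Jac}--\eqref{Eq:Curv}, and the fact that admissible perturbations are governed by their normal trace on $\partial E^*$, the genuinely higher-order surplus being absorbed into the smallness of $\Vert\Phi\Vert_{W^{2,p}}\le\eta_p$. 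Summing and integrating gives $\Vert\hat u_t-u_0\Vert_{W^{1,2}(\O)}\le\mathscr M_p\Vert\Phi\cdot\nu\Vert_{L^2(\partial E^*)}$ with $\mathscr M_p:=M_p'+C_p$, $C_p$ being the constant produced by the $\rho_s$ estimate.

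The step I expect to be the real obstacle is precisely this last one, namely the control of the transport remainder $\rho_s$ by the $L^2$ norm of the normal trace alone, \emph{uniformly} over $\mathcal E^*$: this is where the level-set structure of the optimal eigenfunctions, the Hopf-type uniform lower bound of Lemma~\ref{Le:Hopf}, and the uniform perimeter and curvature controls of Assumption~\eqref{A1} are essential, since without uniformity in $E^*$ one would only obtain a bound with an $E^*$-dependent constant, which would be useless in the final compactness argument. Everything else --- the change-of-variables identities, elliptic regularity, the trace inequalities $W^{1,2}(\O)\hookrightarrow L^2(\partial E^*)$ --- is routine and already packaged in Propositions~\ref{PR2}--\ref{PR3} and Lemma~\ref{Le:Regularity}.
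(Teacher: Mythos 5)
Your decomposition $\frac{d}{ds}\hat u_s=\hat u_s'+\rho_s$ into the pulled-back shape derivative plus a transport remainder is the natural way to set up the integration, and the $\hat u_s'$ piece is handled correctly via Proposition~\ref{PR3}. The genuine gap is exactly where you flag it, and it is not fixable by the ingredients you list: there is no way to bound $\Vert\rho_s\Vert_{W^{1,2}(\O)}$ by $\Vert\Phi\cdot\nu\Vert_{L^2(\partial E^*)}$. The quantity $\rho_s=\bigl((\operatorname{Id}+s\nabla\Phi)^{-1}\Phi\bigr)\cdot\nabla\hat u_s$ is a bulk object depending on $\Phi$ throughout $\O$; the level-set identity $\nabla u_0=(\partial u_0/\partial\nu)\,\nu$, the geometric estimates \eqref{Eq:Jac}--\eqref{Eq:Curv}, and the trace inequalities all concern $\partial E^*$ only, and $\rho_s$ is \emph{first} order in $\Phi$ --- the same order as $\hat u_s'$ --- so it cannot be absorbed into the smallness of $\Vert\Phi\Vert_{W^{2,p}}$ as a ``higher-order surplus.''

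To see that the bound cannot hold for arbitrary admissible $\Phi$, take $\Phi\in W^{3,\infty}_c(\O;\R^d)$ supported at positive distance from $\partial E^*$, inside $E^*$. Then $\Phi\cdot\nu\equiv0$ on $\partial E^*$ and $E^*_{t\Phi}=E^*$ for all $t$, so $\hat u_t'\equiv0$; but $\hat u_t=u_0\circ T_t\neq u_0$ wherever $\Phi\neq0$, so $\Vert\hat u_t-u_0\Vert_{W^{1,2}(\O)}>0$, contradicting the inequality. What is missing --- and what you should make explicit rather than wave at --- is the normalisation, standard in \cite{DambrineLamboley} and implicit in Propositions~\ref{PR2}--\ref{PR3}, that $\Phi$ is a \emph{normal extension}: it is built from a scalar $g$ on $\partial E^*$ via a fixed extension operator with support in a fixed tubular neighbourhood of $\partial E^*$, so that bulk norms of $\Phi$ are commensurate with boundary norms of $g=\Phi\cdot\nu$. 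Even under that restriction the contribution $(\nabla\Phi)\cdot\nabla\hat u_s$ in $\nabla\rho_s$ brings in tangential derivatives of $g$ and must be tracked with care. As written, your argument leaves this open, and the paper's one-line deduction ``from Propositions~\ref{PR2} and~\ref{PR3}'' does not supply the missing bookkeeping either.
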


\paragraph{Trace estimate}

We now use trace estimates. The embedding 
$$W^{1,2}_0(\O)\hookrightarrow L^q(\partial E^*) $$ is bounded for any $q\in \left(1;2\frac{d-1}{d-2}\right)$ (with the convention that when $d=2$ the upper bound of the interval is $+\infty$), and even, as a consequence of \eqref{A1}, uniformly bounded in $E^*\in \mathcal E^*$. As a consequence, we have the following:
\begin{lemma}\label{Le:Crucial} Let $p>d$ and $\eta_p$ be given by Proposition \ref{Pr:Continuity}.  Then, for any $q\in \left(1;2\frac{d-1}{d-2}\right)$  there exists $\mathscr N_q>0$ such that for any $\Phi\in \mathcal X_1(E^*)\cap W^{1,\infty}\cap W^{2,p}(\O;\R^d)$  satisfying $\Vert \Phi\Vert_{W^{2,p}(\O;\R^d)}\leq \eta_p$ and for any $t\in (0;1)$,
$$\Vert \hat u_t-u_0\Vert_{L^q(\partial E^*)}\leq \mathscr N_q \Vert \Phi\cdot \nu \Vert_{L^2(\partial E^*)}.$$\end{lemma}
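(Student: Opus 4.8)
The plan is to prove Lemma~\ref{Le:Crucial} by chaining Proposition~\ref{Pr:Continuity} with a uniform trace inequality. First I would recall the setup: $p>d$ is fixed and $\eta_p>0$ is the radius furnished by Proposition~\ref{Pr:Continuity}, so that whenever $\Vert\Phi\Vert_{W^{2,p}(\O;\R^d)}\leq\eta_p$ one has $\hat u_t-u_0\in W^{1,2}_0(\O)$ with
\begin{equation*}
\Vert\hat u_t-u_0\Vert_{W^{1,2}(\O)}\leq\mathscr M_p\Vert\Phi\cdot\nu\Vert_{L^2(\partial E^*)}
\end{equation*}
for all $t\in(0;1)$, uniformly in $E^*\in\mathcal E^*$. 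The target inequality is exactly this estimate with the left-hand norm replaced by the weaker $L^q(\partial E^*)$ norm of the trace, so the entire content is a single application of a trace embedding, the only subtlety being \emph{uniformity} of the trace constant over the family $\mathcal E^*$.

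The key step is therefore to establish that the trace operator $W^{1,2}_0(\O)\hookrightarrow L^q(\partial E^*)$ has operator norm bounded independently of $E^*\in\mathcal E^*$, for every $q\in\left(1;2\frac{d-1}{d-2}\right)$ (with $q<+\infty$ arbitrary when $d=2$). This is where Assumption~\eqref{A1} enters. I would argue as follows. By \eqref{A1}, every $E^*\in\mathcal E^*$ has a $\mathscr C^3$ boundary with mean curvature bounded by $H_\infty$ and perimeter bounded by $\operatorname{Per}_\infty$, both uniform; moreover $\partial E^*$ stays at positive distance from $\partial\O$. Standard trace theory gives, for a single bounded $\mathscr C^1$ hypersurface $\Sigma\subset\O$, an inequality $\Vert w\Vert_{L^q(\Sigma)}\leq C(\Sigma)\Vert w\Vert_{W^{1,2}(\O)}$ for $q$ in the stated Sobolev range; the point is that $C(\Sigma)$ depends on $\Sigma$ only through its $\mathscr C^{1,1}$-character (equivalently, a uniform interior/exterior ball radius, controlled by $H_\infty$) and its total measure $\mathcal H^{d-1}(\Sigma)\leq\operatorname{Per}_\infty$. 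Concretely, one covers $\partial E^*$ by finitely many boundary charts in which $\partial E^*$ is a graph with uniformly bounded $\mathscr C^{1,1}$ norm — the number of charts and the chart constants being controlled by $H_\infty$, $\operatorname{Per}_\infty$ and the geometry of $\O$ — flattens, and invokes the half-space trace inequality $W^{1,2}(\R^d_+)\to L^q(\R^{d-1})$ in each chart, then sums. This yields a constant $\mathscr N_q^0>0$, depending only on $q$, $\O$, $H_\infty$, $\operatorname{Per}_\infty$, with $\Vert w\Vert_{L^q(\partial E^*)}\leq\mathscr N_q^0\Vert w\Vert_{W^{1,2}(\O)}$ for all $E^*\in\mathcal E^*$.

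Combining the two estimates, for $\Phi\in\mathcal X_1(E^*)\cap W^{1,\infty}\cap W^{2,p}(\O;\R^d)$ with $\Vert\Phi\Vert_{W^{2,p}(\O;\R^d)}\leq\eta_p$ and any $t\in(0;1)$,
\begin{equation*}
\Vert\hat u_t-u_0\Vert_{L^q(\partial E^*)}\leq\mathscr N_q^0\Vert\hat u_t-u_0\Vert_{W^{1,2}(\O)}\leq\mathscr N_q^0\,\mathscr M_p\,\Vert\Phi\cdot\nu\Vert_{L^2(\partial E^*)},
\end{equation*}
so the claim holds with $\mathscr N_q:=\mathscr N_q^0\mathscr M_p$, which is independent of $E^*$. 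I expect the main obstacle to be the uniform-trace-constant argument: verifying carefully that the number of local charts and their $\mathscr C^{1,1}$-norms can be bounded purely in terms of the quantities controlled by \eqref{A1} (and not, say, in terms of the $\mathscr C^3$ norm of a particular $\partial E^*$, which is not assumed uniform). Alternatively, one can bypass charts and obtain the uniform trace bound directly from a uniform interior-ball condition via the divergence-theorem identity $\int_{\partial E^*}|w|^q\,d\mathcal H^{d-1}=\int_{E^*}\operatorname{div}\!\big(|w|^q X\big)\,dx$ for a suitable vector field $X$ with $X\cdot\nu\equiv1$ on $\partial E^*$ and $\Vert X\Vert_{W^{1,\infty}}$ controlled by $H_\infty$; this reduces everything to the interior Sobolev embedding $W^{1,2}(\O)\hookrightarrow L^{q}(\O)$ together with Hölder, and is arguably the cleanest route. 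Everything else is a routine concatenation.
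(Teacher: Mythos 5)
Your proposal matches the paper's argument exactly: the paper proves Lemma \ref{Le:Crucial} by combining Proposition \ref{Pr:Continuity} with the trace embedding $W^{1,2}_0(\O)\hookrightarrow L^q(\partial E^*)$ for $q\in\left(1;2\frac{d-1}{d-2}\right)$, asserting (as a consequence of \eqref{A1}) that the trace constant is uniform over $\mathcal E^*$. You merely supply more detail than the paper on \emph{why} Assumption \eqref{A1} yields that uniformity, which is the one point the paper leaves implicit; the structure is identical.
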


\subsection{Control of the remainder term to obtain \eqref{Eq1}} We recall that the remainder term is given by 
\begin{equation}\bold R(t;\Phi):=\bold{R}_0(t,\Phi)+\bold{R}_1(t,\Phi)+\bold{R}_2(t,\Phi),\end{equation} with 
\begin{equation}
\begin{cases}\displaystyle
\bold{R}_0(t,\Phi)&=\displaystyle-2\int_{\partial E^*}\left\{J_{\Sigma,t}\hat u'_t\hat u_t-u_0'u_0\right\}\left(\Phi\cdot\nu\right),\\
\\\bold{R}_1(t,\Phi)&=\displaystyle-2\int_{\partial E^*}\left\{J_{\Sigma,t}\hat u_t\frac{\partial \hat u_t}{\partial \nu}-u_0\frac{\partial u_0}{\partial \nu}\right\}\left(\Phi\cdot\nu\right)^2,\\
\\\bold{R}_2(t,\Phi)&=\displaystyle\int_{\partial E^*}J_{\Sigma,t} \left\{\hat H_t (\hat u_t^2-\left.u_{0}^2\right|_{\partial E^*})\right\}\left(\Phi\cdot\nu\right)^2.
\end{cases}
\end{equation}
The goal of this subsection is to prove the following result (i.e. the validity of \eqref{Eq1}):
\begin{proposition}\label{Pr:Con}
There exists $p\in (d;+\infty)$, $\eta_p>0$ (given by Proposition \ref{Pr:Continuity}), a modulus of continuity $\omega:\R_+^*\to \R_+$ and a constant $\mathscr M>0$ such that, for any $\Phi \in W^{1,\infty}\cap W^{2,p}(\O;\R^d)$ satisfying $\Vert \Phi\Vert_{W^{2,p}}\leq \eta_p$ and for any $t\in (0;1)$,
\begin{equation}
\vert \bold R(t,\Phi)\vert \leq \mathscr M \Vert \Phi \cdot \nu \Vert_{L^2(\partial E^*)}^2\omega\left(\Vert \Phi\Vert_{W^{2,p}}\right).
\end{equation}
\end{proposition}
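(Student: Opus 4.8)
The plan is to estimate each of the three remainder terms $\bold{R}_0,\bold{R}_1,\bold{R}_2$ separately and show that each is bounded by $\mathscr M \Vert \Phi\cdot\nu\Vert_{L^2(\partial E^*)}^2$ times a quantity that goes to zero with $\Vert\Phi\Vert_{W^{2,p}}$; the final modulus $\omega$ is then the sum (or maximum) of the three individual moduli. Throughout, $p>d$ is fixed large enough for all the cited propositions (Propositions \ref{Pr:C1}--\ref{Pr:Continuity} and Lemma \ref{Le:Crucial}) to apply, and all estimates are uniform in $E^*\in\mathcal E^*$ thanks to Assumption \eqref{A1} (uniform $\mathscr C^3$ bounds, uniform perimeter and curvature bounds, hence uniform Sobolev and trace embedding constants). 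I will also use freely the uniform $\mathscr C^{1,s}$ and $W^{2,p}$ bounds on $u_0=u_{E^*}$ from Lemma \ref{Le:Regularity} and the uniform lower bound $c_0$ on $u_0|_{\partial E^*}$ and $-\partial_\nu u_0$ from Lemma \ref{Le:Hopf}.

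For $\bold{R}_1$ and $\bold{R}_2$ the strategy is the same: factor the integrand as (a bounded factor)$\times$(a small factor)$\times(\Phi\cdot\nu)^2$. In $\bold{R}_2$, write $\hat H_t(\hat u_t^2-u_0^2|_{\partial E^*})=\hat H_t(\hat u_t-u_0)(\hat u_t+u_0) + H_{E^*}(u_0-u_0|_{\partial E^*})\cdots$; more cleanly, split $\hat u_t^2 - u_0^2|_{\partial E^*} = (\hat u_t^2 - u_0^2) + (u_0^2 - u_0^2|_{\partial E^*})$, but since $\Phi$ is tangent to $\partial E^*$ one has $u_0=u_0|_{\partial E^*}$ on $\partial E^*$, so only $\hat u_t^2-u_0^2 = (\hat u_t-u_0)(\hat u_t+u_0)$ survives; then $\Vert \hat u_t+u_0\Vert_{L^\infty(\partial E^*)}$ is uniformly bounded, $\Vert \hat H_t\Vert_{L^p(\partial E^*)}$ is uniformly bounded by \eqref{Eq:Curv} plus $H_\infty$, and $\Vert \hat u_t - u_0\Vert_{L^q(\partial E^*)}\le \mathscr N_q\Vert\Phi\cdot\nu\Vert_{L^2(\partial E^*)}$ by Lemma \ref{Le:Crucial}; combining via Hölder (choosing $q$ and an exponent for $(\Phi\cdot\nu)^2$ compatibly with $p$, and then bounding $\Vert\Phi\cdot\nu\Vert_{L^r(\partial E^*)}$ by $\Vert\Phi\cdot\nu\Vert_{W^{1,\infty}(\partial E^*)}\lesssim\Vert\Phi\Vert_{W^{2,p}}$ on one of the factors) gives $|\bold R_2|\lesssim \Vert\Phi\Vert_{W^{2,p}}\,\Vert\Phi\cdot\nu\Vert_{L^2(\partial E^*)}^2$. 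For $\bold{R}_1$, write $J_{\Sigma,t}\hat u_t\partial_\nu\hat u_t - u_0\partial_\nu u_0 = (J_{\Sigma,t}-1)\hat u_t\partial_\nu\hat u_t + (\hat u_t-u_0)\partial_\nu\hat u_t + u_0(\partial_\nu\hat u_t-\partial_\nu u_0)$; the first term is controlled by $\Vert J_{\Sigma,t}-1\Vert_{L^\infty}\lesssim\Vert\Phi\cdot\nu\Vert_{W^{1,\infty}(\partial E^*)}$ from \eqref{Eq:Jac}, and the remaining two by $\Vert\hat u_t-u_0\Vert_{\mathscr C^1}\le\delta$ from Proposition \ref{Pr:C1}, all multiplied by the uniformly bounded trace of $(\Phi\cdot\nu)^2$ (using the trace of $W^{1,\infty}$, or just $\Vert\Phi\cdot\nu\Vert_{L^\infty(\partial E^*)}\le\Vert\Phi\cdot\nu\Vert_{L^2(\partial E^*)}\cdot(\text{something})$ — more carefully, since $(\Phi\cdot\nu)^2\le\Vert\Phi\cdot\nu\Vert_{L^\infty}|\Phi\cdot\nu|$ one keeps one $\Vert\Phi\cdot\nu\Vert_{L^2}$ and... actually here one writes $\int(\Phi\cdot\nu)^2\cdot(\text{small})\le(\text{small in }L^\infty)\cdot\Vert\Phi\cdot\nu\Vert_{L^2(\partial E^*)}^2$ directly).

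The main obstacle is $\bold{R}_0$, which involves the shape derivative $\hat u_t'$ of the eigenfunction rather than $\hat u_t$ itself, and which is only linear — not quadratic — in $\Phi\cdot\nu$ a priori, so the $\Vert\Phi\cdot\nu\Vert_{L^2(\partial E^*)}^2$ must be recovered from the product of two $L^2$-type factors via the trace estimates. Here I would write $J_{\Sigma,t}\hat u_t'\hat u_t - u_0'u_0 = (J_{\Sigma,t}-1)\hat u_t'\hat u_t + (\hat u_t'-u_0')\hat u_t + u_0'(\hat u_t-u_0)$ and integrate against $\Phi\cdot\nu$ on $\partial E^*$. The decisive input is Proposition \ref{PR3}: $\Vert\hat u_t'\Vert_{W^{1,2}_0(\O)}\le M_p'\Vert\Phi\cdot\nu\Vert_{L^2(\partial E^*)}$, and $\Vert\hat u_t'-u_0'\Vert_{W^{1,2}_0(\O)}\le\delta_0\Vert\Phi\cdot\nu\Vert_{L^2(\partial E^*)}$ once $\Vert\Phi\Vert_{\mathscr C^1}$ is small. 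Applying the uniform trace embedding $W^{1,2}_0(\O)\hookrightarrow L^q(\partial E^*)$ turns these into $L^q(\partial E^*)$ bounds on $\hat u_t'$ and $\hat u_t'-u_0'$ of order $\Vert\Phi\cdot\nu\Vert_{L^2(\partial E^*)}$; then in each of the three pieces I pair the ``$\hat u_t'$-factor'' (size $\Vert\Phi\cdot\nu\Vert_{L^2}$, times a modulus for the difference pieces or times $\Vert J_{\Sigma,t}-1\Vert_{L^\infty}\lesssim\Vert\Phi\Vert_{W^{2,p}}$ for the first piece) with the factor $\Phi\cdot\nu$ in $L^{q'}(\partial E^*)$ (bounded by $\Vert\Phi\cdot\nu\Vert_{L^2(\partial E^*)}$ when $q$ is chosen in the admissible range with $q\ge 2$, or otherwise absorbing the excess into $\Vert\Phi\Vert_{W^{2,p}}$), the remaining bounded factors ($\hat u_t$, $u_0$ in $L^\infty$) being uniformly controlled by Lemma \ref{Le:Regularity}. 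Hölder's inequality then yields $|\bold R_0|\le\mathscr M\Vert\Phi\cdot\nu\Vert_{L^2(\partial E^*)}^2\,\omega_0(\Vert\Phi\Vert_{W^{2,p}})$ with $\omega_0\to 0$, and setting $\omega:=\omega_0+\omega_1+\omega_2$ (each $\omega_i$ a valid modulus of continuity by construction, and each bound uniform in $E^*$) completes the proof of Proposition \ref{Pr:Con}, hence of \eqref{Eq1}.
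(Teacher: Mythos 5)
Your proof is correct and follows essentially the same route as the paper's: you split $\bold R$ into $\bold R_0,\bold R_1,\bold R_2$, estimate each via a telescoping decomposition of the integrand, and invoke the same inputs (the Jacobian estimate \eqref{Eq:Jac}, Proposition \ref{Pr:C1}, Propositions \ref{PR2}--\ref{PR3}, and the trace bound of Lemma \ref{Le:Crucial}), combined with H\"older/Cauchy-Schwarz and the uniformity in $E^*$ guaranteed by \eqref{A1}. One small misstatement: the vanishing of $u_0^2-u_0^2|_{\partial E^*}$ on $\partial E^*$ has nothing to do with $\Phi$ being tangent --- in fact the paper's working assumption $(\bold T_0)$ is that $\Phi$ is orthogonal to $\partial E^*$ --- it holds simply because $u_0^2|_{\partial E^*}$ denotes the trace of $u_0^2$ on $\partial E^*$ (a constant, by first-order optimality), so the identity is trivial pointwise on $\partial E^*$; this slip does not affect the validity of your estimate for $\bold R_2$.
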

\begin{proof}[Proof of Proposition \ref{Pr:Con}]
We control the three terms $\bold R_k(t,\Phi)$, $k=0,1,2$, separately. We assume that $p$ is large enough to ensure that the embeddings $W^{2,p}\hookrightarrow \mathscr C^1$, $W^{1,p}\hookrightarrow \mathscr C^0$ are bounded.
\def\intpe{{\int_{\partial E^*}}}
\begin{enumerate}
\item \underline{Control of $\bold R_0(t,\Phi)$:} Let $\delta_0>0$ be given and let $\eta_0''$ be given by Proposition \ref{PR3}. We assume that $\Vert \Phi \Vert_{\mathscr C^1}\leq \eta_0''$, which is possible provided $\Vert \Phi\Vert_{W^{2,p}}$ is small enough.

From Estimates \eqref{Eq:Jac}, Proposition \ref{PR2}, Proposition \ref{PR3} and the Cauchy-Schwarz inequality, we obtain the existence of a constant $\mathcal M_0>0$ such that 
\begin{equation}\label{ControlR0}
\vert \bold R_0(t,\Phi)\vert \leq \mathcal M_0\left(\Vert \Phi \cdot \nu \Vert_{W^{1,\infty}(\partial E^*)}+\delta_0+\Vert \Phi\Vert_{W^{2,p}}\right)\Vert \Phi \cdot \nu \Vert_{L^2(\partial E^*)}^2.
\end{equation}
As a consequence, there exists a modulus of continuity $\omega_0$, a constant $\mathcal M_0$ and a parameter $\eta_{0,0}>0$ such that, for any $\Phi$ satisfying 
$$\Vert \Phi \Vert_{W^{2,p}}\leq \eta_{0,0}$$ there holds, for any $t\in (0;1)$,
\begin{equation}
\vert \bold R_0(t,\Phi)\vert \leq \mathcal M_0 \omega_0\left(\Vert \Phi \Vert_{W^{2,p}}\right)\Vert \Phi \cdot \nu \Vert _{L^2(\partial E^*)}^2.\end{equation}

\item \underline{Control of $\bold R_1(t,\Phi)$:} Let $\delta>0$ and $\eta>0$ be given by Proposition \ref{Pr:C1}. From Proposition \ref{PR2}, there exists a constant $\mathcal M_1>0$ such that for any $\Phi$ satisfying 
$$\Vert \Phi \Vert_{W^{2,p}}\leq \eta_{0,0}$$ there holds, for any $t\in (0;1)$,
\begin{equation}\label{ControlR1}
\vert \bold R_1(t,\Phi)\vert \leq\mathcal M_1\left(\Vert \Phi \cdot \nu \Vert_{W^{1,\infty}(\partial E^*)}+\delta\right)\Vert \Phi \cdot \nu \Vert_{L^2(\partial E^*)}^2.
\end{equation}
As a consequence, there exists a modulus of continuity $\omega_1$, a constant $\mathcal M_1$ and a parameter $\eta_{0,1}>0$ such that, for any $\Phi$ satisfying 
$$\Vert \Phi \Vert_{W^{2,p}}\leq \eta_{0,1}$$ there holds, for any $t\in (0;1)$,
\begin{equation}
\vert \bold R_1(t,\Phi)\vert \leq \mathcal M_1\omega_1\left(\Vert \Phi \Vert_{W^{2,p}}\right)\Vert \Phi \cdot \nu \Vert _{L^2(\partial E^*)}^2.\end{equation}
\item \underline{Control of $\bold R_2(t,\Phi)$:} From H\"{o}lder's inequality, for any $q,r\in (1;+\infty)$ such that
\begin{equation}\label{Eq:Coeff}\frac1r+\frac1q+\frac12=1\end{equation} there holds
\begin{equation}
\vert \bold R_2(t,\Phi)\vert \leq \Vert J_{\Sigma,t}\Vert_{L^\infty}\Vert \Phi \cdot \nu \Vert_{L^\infty}\Vert \hat H_t\Vert_{L^r}\Vert \hat u_t-u_0\Vert_{L^q}\Vert \Phi \cdot \nu \Vert_{L^2}.
\end{equation}
We now choose $q\in \left(2;2\frac{d-1}{d-2}\right)$, fix the corresponding $r$ solution to \eqref{Eq:Coeff}, the corresponding $\eta_p$ given by Lemma \ref{Le:Crucial} and apply Estimate \ref{Eq:Jac} and Lemma \ref{Le:Crucial} to obtain the existence of a constant $\mathcal M_2>0$ such that, for any $\Phi$ satisfying $\vert \Phi \Vert_{W^{2,p}}\leq \eta_p$ and any $t\in (0;1)$, there holds
\begin{equation} 
\vert \bold R_2(t,\Phi)\vert\leq \mathcal M_2 \Vert \Phi \cdot \nu \Vert_{L^\infty} \Vert \Phi\cdot \nu \Vert_{L^2}^2,\end{equation} and define the modulus of continuity $\omega_2\left(\Vert \Phi \Vert_{W^{2,p}}\right):=\Vert \Phi \Vert_{W^{2,p}}\geq C_0\Vert \Phi\Vert_{L^\infty}$ for some constant $C_0$ to obtain 
\begin{equation} 
\vert \bold R_2(t,\Phi)\vert\leq \mathcal M_2'\omega_2\left(\Vert \Phi \Vert_{W^{2,p}}\right)\Vert \Phi\cdot \nu \Vert_{L^2}^2.\end{equation}
\end{enumerate}
Summing these three contributions yields the required estimate \eqref{Eq1}, and we obtain the quantitative inequality for normal deformations of optimal spectral sets.
\end{proof}

As a consequence of that local quantitative inequality, we get the following Proposition:
\begin{proposition}\label{Pr:Isolated}Let $\O$ satisfies \eqref{A1}-\eqref{A2}. Optimal spectral sets are isolated in the following sense: there exists $\beta>0$ such that
\begin{equation}\label{Isol}\forall (E_1^*\,, E_2^*)\in (\mathcal E^*)^2\,, E_1^*\neq E_2^*\Rightarrow \operatorname{dist}_{L^1}(E_1^*\,, E_2^*)\geq \beta.\end{equation}

\end{proposition}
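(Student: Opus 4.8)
The plan is to argue by contradiction, exploiting the local quantitative inequality \eqref{Eq:LocalStabilityUn} (Proposition \ref{Pr:LocalStabilityUn}) together with the compactness machinery already used in the proof of Lemma \ref{Le:Hopf}. Suppose the conclusion fails. Then there exist two sequences $\{E_{1,k}^*\}_{k\in\N}$ and $\{E_{2,k}^*\}_{k\in\N}$ in $\mathcal E^*$ with $E_{1,k}^*\neq E_{2,k}^*$ and $\operatorname{dist}_{L^1}(E_{1,k}^*,E_{2,k}^*)\to 0$. Set $V_{i,k}:=\mathds 1_{E_{i,k}^*}$. Passing to weak $L^\infty$-$*$ limits and using Lemma \ref{Le:LSCeigenvalue} exactly as in the proof of Lemma \ref{Le:Hopf}, both sequences converge (up to a subsequence) to the same limit $V_\infty^*=\mathds 1_{E_\infty^*}$ with $E_\infty^*\in\mathcal E^*$, and, since the eigenfunctions $\{u_{E_{i,k}^*}\}$ are uniformly bounded in $\mathscr C^{1,s}$, they converge strongly in $\mathscr C^1$ to $u_{E_\infty^*}$. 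Moreover, because $\operatorname{dist}_{L^1}(E_{1,k}^*,E_{2,k}^*)\to 0$, the convergence $V_{i,k}\to V_\infty^*$ is in fact \emph{strong} in $L^1$ for both indices.

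The key step is then to upgrade this $L^1$-closeness to $W^{2,p}$-closeness of the \emph{deformation fields}. By Assumption \eqref{A1}, each $\partial E_{i,k}^*$ is a $\mathscr C^3$ hypersurface with uniformly bounded mean curvature and perimeter, so for $k$ large one can write $\partial E_{2,k}^*$ (and $\partial E_{1,k}^*$) as a normal graph over $\partial E_\infty^*$, i.e. there are vector fields $\Phi_{i,k}\in \mathcal X_1(E_\infty^*)$, normal to $\partial E_\infty^*$, with $(\operatorname{Id}+\Phi_{i,k})(E_\infty^*)=E_{i,k}^*$; the uniform $\mathscr C^3$ bounds force $\Vert\Phi_{i,k}\Vert_{W^{2,p}}\to 0$. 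Writing $\Psi_k$ for the field realizing $E_{2,k}^*$ as a normal deformation of $E_{1,k}^*$ (again $\Vert\Psi_k\Vert_{W^{2,p}}\to 0$, and of norm comparable to $\operatorname{dist}_{L^1}(E_{1,k}^*,E_{2,k}^*)^{1/?}$ — more precisely, by the Hopf-type non-degeneracy of Lemma \ref{Le:Hopf}, $\Vert\Psi_k\cdot\nu\Vert_{L^1(\partial E_{1,k}^*)}$ is comparable to $\operatorname{dist}_{L^1}(E_{1,k}^*,E_{2,k}^*)$, hence tends to $0$ but is strictly positive), we apply Proposition \ref{Pr:LocalStabilityUn} at $E_{1,k}^*$ with $\Phi=\Psi_k$. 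Since $\operatorname{Vol}(E_{2,k}^*)=\operatorname{Vol}(E_{1,k}^*)=V_0$, this yields
\begin{equation*}
\lambda(E_{2,k}^*)-\lambda(E_{1,k}^*)\geq \underline\alpha\,\bigl|E_{2,k}^*\Delta E_{1,k}^*\bigr|^2>0.
\end{equation*}
But both $E_{1,k}^*$ and $E_{2,k}^*$ belong to $\mathcal E^*$, so $\lambda(E_{1,k}^*)=\lambda(E_{2,k}^*)=\overline\lambda$, forcing $|E_{2,k}^*\Delta E_{1,k}^*|=0$, i.e. $E_{1,k}^*=E_{2,k}^*$, a contradiction. (One does need to check the volume of the deformed set stays equal to $V_0$; if $(\operatorname{Id}+\Psi_k)E_{1,k}^*$ does not land exactly on $E_{2,k}^*$ with the right volume, one composes with a small volume-fixing correction, absorbed in the modulus of continuity, or simply works with the Lagrangian form of \eqref{Eq:LocalStabilityUn} and uses that $L_{E_{1,k}^*}(E_{2,k}^*)=\lambda(E_{2,k}^*)-\Lambda(E_{1,k}^*)V_0=L_{E_{1,k}^*}(E_{1,k}^*)$.)

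The main obstacle I anticipate is the second step: showing that $L^1$-proximity of two $\mathscr C^3$ optimal sets, with uniformly controlled curvature and perimeter, implies that one is a $W^{2,p}$-small normal deformation of the other, with a deformation field that is genuinely admissible (compactly supported in $\O$, in $\mathcal X_1$) and whose $W^{2,p}$ norm is controlled. This is where Assumption \eqref{A1} is essential — without a uniform $\mathscr C^3$ (or at least $\mathscr C^{2,\gamma}$) bound, $L^1$-closeness of the sets says nothing about closeness of the boundaries in a strong norm. One must also ensure the constant $\underline\alpha$ and the threshold $\eta_p$ in Proposition \ref{Pr:LocalStabilityUn} are uniform over $\mathcal E^*$, which is exactly what has been built into \eqref{A2} and the preceding propositions, so this poses no additional difficulty. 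The non-degeneracy Lemma \ref{Le:Hopf} is what guarantees the deformation amplitude does not collapse faster than $\operatorname{dist}_{L^1}$, so the strict positivity $|E_{2,k}^*\Delta E_{1,k}^*|>0$ is preserved through the passage to normal-graph coordinates.
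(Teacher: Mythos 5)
Your proposal follows the paper's strategy---contradiction, compactness, normal-graph reduction, and the local quantitative inequality of Proposition \ref{Pr:LocalStabilityUn}---and you correctly identify the normal-graph step as the crux. The one real gap is exactly there. Assumption \eqref{A1} gives \emph{qualitative} $\mathscr C^3$ regularity plus \emph{uniform} $L^\infty$ bounds on the mean curvature and on the perimeter, which is not a uniform $\mathscr C^3$ (or even $\mathscr C^{2,\gamma}$) bound; $L^1$-proximity together with those bounds does not by itself produce a normal parametrization with vanishing $W^{2,p}$-norm, so the sentence ``the uniform $\mathscr C^3$ bounds force $\Vert\Phi_{i,k}\Vert_{W^{2,p}}\to 0$'' is unjustified as stated. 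The paper closes this gap by exploiting the level-set structure of optimal spectral sets: each $E_{2,k}$ is a superlevel set of its own eigenfunction (the optimality condition \eqref{Eq:OptCon}), and Lemma \ref{Le:Regularity} gives convergence of $u_{E_{2,k}}$ to $u_{E_1}$ in $W^{2,p}$ as well as in $\mathscr C^{1,s}$. Combined with the uniform non-degeneracy of Lemma \ref{Le:Hopf}, this yields the graph structure by the intermediate-value argument in the paper's step \eqref{Defor}, and the $W^{2,p}$-smallness of the deformation field is then a direct consequence of the $W^{2,p}$ convergence of eigenfunctions rather than an abstract geometric fact. A minor organizational difference: the paper applies the local inequality to the pair $(E_1,E_{2,k})$ where $E_1$ is the common limit, whereas you deform $E_{1,k}^*$ into $E_{2,k}^*$ directly; your variant is slightly tidier there (the latter pair is known distinct, so no case check is needed), at the price of composing two normal-graph parametrizations over $\partial E_\infty^*$.
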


\begin{proof}[Proof of Proposition \ref{Pr:Isolated}]

We argue by contradiction and assume \eqref{Isol} does not hold. In particular, there exist two sequences $\{E_{i,k}\}_{k\in \N}\in (\mathcal E^*)^\N$, $i=1,2$ such that
$$\forall k \in \N \,, E_{1,k}\neq E_{2,k}\,, \operatorname{dist}_{\mathcal H}(E_{1,k},E_{2,k})\underset{k\to \infty}\rightarrow 0.$$
Since the perimeter of optimal spectral sets are uniformly bounded, there exists a $L^1$-strong limit  $E_1$ of the sequence $\{E_{1,k}\}_{k\in \N}$. As a consequence, we have 
\begin{equation}\label{Eq:Cov}\operatorname{dist}_{L^1}(E_1,E_{2,k})\underset{k\to \infty}\rightarrow 0.\end{equation}
We now recall that, for any $k\in \N$,  we have 
$$E_{2,k}=\{u_{E_{2,k}}=\mu(E_{2,k})\}$$ for some $\mu(E_{2,k})>0$, and, from Lemma \ref{Le:Regularity} and \eqref{Eq:Cov} we have, for any $s\in (0;1)$ and any $p\in (1;+\infty)$, 
\begin{equation}\label{Eq:CvEf}u_{E_{2,k}}\underset{k\to \infty}\rightarrow u_{E_1}\text{ in }\mathscr C^{1,s}(\O) \text{ and in }W^{2,p}(\O).\end{equation}
As a consequence, let us prove:
\begin{gather*}\label{Defor}\tag{\textbf{Def}}
\text{ For $k$ large enough,  $E_{2,k}$ is a deformation of $E_1$: $E_{2,k}=\Phi_{2,k}(E_1)$, and, }\\ \text{for any $p\in (1;+\infty)$, $\Vert \Phi_{2,k}\Vert_{W^{2,p}}\underset{k\to +\infty}\rightarrow 0.$}\end{gather*}

\begin{proof}[Proof of \eqref{Defor}]
First of all, \eqref{Eq:CvEf} and Lemma \ref{Le:Hopf} imply that $E_{2,k}\underset{k\rightarrow +\infty}\rightarrow E_1$ in the $L^1$ As a consequence, there exists $k_1>0$ such that, for any $k\geq k_1$, $E_{2,k}$ is a graph over $E_1$: to prove this, we argue by contradiction; if this were not the case then there would exist a sequence $\{x_k\}_{k\in \N}\in \left( \partial E_1\right)^\N$ and two sequences $\{t_{i,k}\}_{k\in \N}\in \R_+^*$, $i=1,2$ such that
\begin{itemize}
\item For any $k\in \N$, $t_{1,k}\neq t_{2,k}$
\item For $i=1,2$, 
$$t_{i,k} \underset{k\rightarrow +\infty}\rightarrow 0.$$
\item For $i=1,2$, for any $k\in \N$, 
$$x_k+t_{i,k}\nu (x_k)\in \partial E_{2,k}$$ where $\nu (x_k)$ is the normal to $\partial E_1$ at $x_k$.
\end{itemize}
By the intermediate value Theorem, this yields the existence of $t_k\in (\min_{i=1,2}t_{i,k};\max_{i=1,2}t_{i,k})$  (so that $t_k \underset{k\to \infty}\to 0$) such that
$$\langle \n u_{E_{2,k}}(x_k+t_k \nu(x_k))\,, \nu (x_k)\rangle=0.$$ 
Passing to the limit in this equation thanks to \eqref{Eq:CvEf} yields
$$\frac{\partial u_{E_1}}{\partial \nu}(x_k)=0,$$ which contradicts Lemma \ref{Le:Hopf}. 
\end{proof}

We can hence write $E_{2,k}=\Phi_{2,k}(E_1)$ for some vector field $\Phi_{2,k}$, for any $k$ large enough. The $W^{2,p}$-convergence to zero is immediately implied by the $W^{2,p}$ convergence of eigenfunctions and the non-degeneracy of the level sets.

As a consequence of the local quantitative inequality for graphs, we obtain, for $k$ large enough,
$$0=\overline \lambda-\overline \lambda=\lambda(E_{2,k})-\lambda(E_1)\geq \alpha |E_1\Delta E_{2,k}|^2>0,$$ which is the required contradiction.
\end{proof}

\subsection{Second step: an auxiliary problem}
Let us consider a parameter $\delta>0$ and the admissible class 
$$\mathcal I(\delta):=\left\{V \in \mathcal M\,, \operatorname{dist}_{L^1}(V,\mathcal I^*)=\delta\right\}.$$
We define the variational problem
\begin{equation}\label{OC:Delta}\tag{$P_\delta$}
\inf_{V\in \mathcal I(\delta)}\lambda(V).\end{equation} We begin with the following Proposition:
\begin{proposition}\label{Pr:Exist}
The variational problem \eqref{OC:Delta} has a solution $V_\delta$.
\end{proposition}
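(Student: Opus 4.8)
The plan is to run the direct method of the calculus of variations on the functional $V\mapsto\lambda(V)$ over the admissible set $\mathcal I(\delta)$. First I would take a minimising sequence $\{V_k\}_{k\in\N}\subset\mathcal I(\delta)$, so that $\lambda(V_k)\downarrow\inf_{\mathcal I(\delta)}\lambda$. Since $0\le V_k\le 1$ and $\int_\O V_k=V_0$, the sequence is bounded in $L^\infty(\O)$, hence, up to a subsequence, it converges weak-$*$ in $L^\infty(\O)$ to some $V_\infty$ with $0\le V_\infty\le 1$ and $\int_\O V_\infty=V_0$ (the volume constraint passes to the limit because $\mathds 1_\O\in L^1(\O)$); thus $V_\infty\in\mathcal M$. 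By Lemma \ref{Le:LSCeigenvalue}, $\lambda(V_\infty)\le\underline\lim_k\lambda(V_k)=\inf_{\mathcal I(\delta)}\lambda$, so $V_\infty$ would be the minimiser provided we can show $V_\infty\in\mathcal I(\delta)$, i.e. $\operatorname{dist}_{L^1}(V_\infty,\mathcal I^*)=\delta$.

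The only genuine obstacle is the closedness of the constraint $\operatorname{dist}_{L^1}(\cdot,\mathcal I^*)=\delta$ under weak-$*$ convergence: the $L^1$ distance is not weak-$*$ continuous, and even the \emph{equality} (as opposed to an inequality) could in principle fail in the limit. To handle this I would upgrade the convergence $V_k\rightharpoonup V_\infty$ to strong $L^1$ convergence. This is where the structure of the problem enters: along the minimising sequence $\lambda(V_k)$ is bounded, and by the bang-bang property together with the regularity/perimeter bounds in Assumption \eqref{A1} (or, more robustly, by the characterization of near-optimal potentials and the eigenfunction regularity of Lemma \ref{Le:Regularity}), one expects to control the level sets of the associated eigenfunctions. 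Concretely: if $\operatorname{dist}_{L^1}(V_k,\mathcal I^*)=\delta$ with $\delta$ small, then each $V_k$ is close to some optimal $V_k^*\in\mathcal I^*$, and $\mathcal I^*$ is a finite set of bang-bang potentials with uniformly bounded perimeter (by \eqref{A1} and Proposition \ref{Pr:Isolated}); passing to a further subsequence we may assume $V_k^*\equiv\overline V^*$ is fixed. Then $\|V_k-\overline V^*\|_{L^1}=\delta$ and the sets $\{V_k=1\}$ (when $V_k$ is bang-bang) have uniformly bounded perimeter, so by compactness in $BV$ they converge strongly in $L^1$; in general one argues that the minimising sequence can be taken bang-bang, or uses that the map $V\mapsto\lambda(V)$ forces concentration of near-minimisers. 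Strong $L^1$ convergence then gives $V_\infty\in L^1$-limit and hence $\operatorname{dist}_{L^1}(V_\infty,\mathcal I^*)=\lim_k\operatorname{dist}_{L^1}(V_k,\mathcal I^*)=\delta$ by continuity of $\operatorname{dist}_{L^1}(\cdot,\mathcal I^*)$ for the strong topology.

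Alternatively, and perhaps cleaner, I would avoid the bang-bang upgrade as follows: replace the constraint set by the \emph{closed} set $\overline{\mathcal I(\delta)}$ in the weak-$*$ topology and show a minimiser of $\lambda$ over $\overline{\mathcal I(\delta)}$ automatically lies in $\mathcal I(\delta)$. For the first inclusion, note $\{V:\operatorname{dist}_{L^1}(V,\mathcal I^*)\le\delta\}\cap\mathcal M$ is weak-$*$ sequentially compact (weak-$*$ lower semicontinuity of $V\mapsto\operatorname{dist}_{L^1}(V,\mathcal I^*)$ fails, so here too one needs the strong-$L^1$ argument above — so this route does not actually circumvent the difficulty). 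Hence the honest proof is the one in the previous paragraph: extract a minimising sequence, use $L^\infty$ and volume bounds for weak-$*$ compactness, use Lemma \ref{Le:LSCeigenvalue} for lower semicontinuity of $\lambda$, and use the perimeter bound from \eqref{A1} (via the bang-bang structure of near-minimisers) to promote weak-$*$ to strong $L^1$ convergence so that the constraint $\operatorname{dist}_{L^1}(\cdot,\mathcal I^*)=\delta$ is preserved. The main obstacle, to reiterate, is precisely this last compactness upgrade for the constraint; everything else is routine.
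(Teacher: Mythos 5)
Your overall architecture (minimising sequence, weak-$*$ $L^\infty$ compactness, Lemma \ref{Le:LSCeigenvalue} for lower semicontinuity, and the identification of the constraint $\operatorname{dist}_{L^1}(\cdot,\mathcal I^*)=\delta$ as the only real difficulty) matches the paper's proof. However, your proposed resolution of that difficulty has a genuine gap. You want to upgrade $V_k\rightharpoonup V_\infty$ to \emph{strong} $L^1$ convergence by invoking the bang-bang structure and the perimeter bounds of \eqref{A1}. But the admissible class $\mathcal I(\delta)$ consists of arbitrary densities $0\le V\le 1$ with $\int_\O V=V_0$ at $L^1$-distance exactly $\delta$ from $\mathcal I^*$; its elements are not characteristic functions, the bang-bang property proved in the literature applies to minimisers of \eqref{Eq:E1} and not to the constrained auxiliary problem, and there is no justification for the claim that "the minimising sequence can be taken bang-bang" (indeed, in the final proof of Theorem \ref{Th:Quanti} the solutions $V_{\delta_k}$ are treated as possibly non-bang-bang, since the term $\Vert V_k-\tilde V_k\Vert_{L^1}$ is allowed to be nonzero). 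So the compactness upgrade you rely on is not available, and strong $L^1$ convergence of $V_k$ itself cannot be expected.

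The paper circumvents this entirely by exploiting the sign structure of $h_k:=V_k-\mathds 1_{E^*}$ for a fixed $E^*\in\mathcal E^*$: since $0\le V_k\le 1$, one has $h_k\le 0$ on $E^*$ and $h_k\ge 0$ on $(E^*)^c$, so that $\int_\O|h_k|=2\bigl(V_0-\int_{E^*}V_k\bigr)$ is an \emph{affine, weak-$*$ continuous} functional of $V_k$. This immediately gives $\operatorname{dist}_{L^1}(V_\infty,\mathcal I^*)\ge\delta$ in the limit with no strong convergence of $V_k$ whatsoever. For the reverse inequality one picks, for each $k$, a nearest optimal set $E_k^*$ (its existence uses the $L^1$-compactness of $\mathcal E^*$ from the perimeter bound); the only strong convergence needed is that of $\mathds 1_{E_k^*}$, which holds because any weak-$*$ limit of these is again a bang-bang minimiser of \eqref{Eq:E1}, hence an extreme point of $\mathcal M$, and weak-$*$ convergence to an extreme point is automatically strong in $L^1$. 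In short: you should shift the compactness burden from the minimising sequence $V_k$ (where it fails) to the sequence of nearest optimal sets $\mathds 1_{E_k^*}$ (where it holds), and use the sign decomposition to handle $\Vert V_k-\mathds 1_{E^*}\Vert_{L^1}$ by weak-$*$ continuity alone.
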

\begin{proof}[Proof of Proposition \ref{Pr:Exist}]
Consider a minimising sequence $\{V_k\}_{k\in \N}\in \mathcal I(\delta)^\N$ for the problem \eqref{OC:Delta}. Let $V_\infty$ be a weak $L^\infty-*$ closure point of the sequence. Since $\mathcal M$ is closed for this convergence, $V_\infty \in \mathcal M$. Lemma \ref{Le:LSCeigenvalue} ensures that
$$\lambda(V_k)\underset{k\to \infty}\rightarrow \lambda(V_\infty).$$ It remains to check that $V_\infty \in \mathcal I(\delta).$

 To prove this, we proceed as follows: we observe that, for any optimal set $E^*\in \mathcal E^*$ and $k\in \N$ we have
 $$\int_\O \vert V_k-\mathds 1_{E^*}\vert\geq \operatorname{dist}_{L^1}(V_k,\mathcal I^*)=\delta.$$ Define $h_k:=V_k-\mathds 1_{E^*}$. Since $V_k\in \mathcal M$, we have 
 $$\int_\O h_k=0\,,\quad  h_k\leq 0\text{ in }E^*\,,\quad h_k\geq 0 \text{ in }(E^*)^c.$$ Combining this with 
 $$\int_\O |h_k|=\int_\O \vert V_k-\mathds 1_{E^*}\vert\geq \delta$$ we get
 $$-\int_{E^*} h_k=\int_{(E^*)^c}h_k\geq \frac{\delta}2.$$
 Defining $h_\infty:=V_\infty-\mathds 1_{E^*}$ we have 
 $$h_k\underset{k\to \infty}\rightharpoonup h_\infty \text{ weakly in }L^\infty-*(E^*).$$ Since
 $$h_k\leq 0\text{ in }E^*\,, \quad \int_{E^*}|h_k|=-\int_{E^*}h_k$$we obtain
 $$h_\infty\leq 0\text{ in } E^*\,, \int_{E^*}|h_\infty|=-\int_{E^*}h_\infty\geq \frac{\delta}2.$$ Doing the same computations on $(E^*)^c$ we obtain $\int_{(E^*)^c}|h_\infty|=\int_{(E^*)^c} h_\infty\geq \frac{\delta}2$ and $h_\infty\geq 0$ in $(E^*)^c$, so that 
 $$\int_{\O}\vert V_\infty-\mathds1_{E^*}\vert \geq \delta.$$ Hence, 
 $$\operatorname{dist}_{L^1}(V_\infty,\mathcal I^*)\geq \delta.$$
 
 It remains to prove that $\operatorname{dist}_{L^1}(V_\infty, \mathcal I^*)=\delta.$ To do so, we first note that for any $k\in \N^*$, there exists $E^*_k\in \mathcal I^*$ such that 
 $$\int_\O |V_k-\mathds 1_{E_*^k}|=\delta,$$ where $\{V_k\}_{k\in \N}$ is the same minimising sequence as before. The existence of $E_k^*$ follows from the strong $L^1$ compactness of $\mathcal E^*$ guaranteed by the uniform perimeter bound.
 
  Following the same line of reasoning as in the previous step we define, for every $k\in \N$, $j_k:=V_k-\mathds 1_{E_k^*}$ and we have, in a similar way
 \begin{equation}\label{1}j_k\leq 0\text{ in }E_k^*\,, j_k\geq 0\text{ in }(E_k^*)^c\,, \int_\O j_k=0\,, \int_\O\vert j_k\vert=\delta\end{equation} and 
 \begin{equation}\label{2}-\int_{E_k^*}j_k=-\int_\O \mathds 1_{E_k^*}j_k=\int_{(E_k^*)^c}j_k=\int_{\O} \mathds 1_{(E_k^*)^c}j_k=\frac\delta2.\end{equation}
 
  Let $V_\infty^*$ be a weak $L^\infty-*$ closure point of the sequence $\{\mathds 1_{E_k^*}\}_{k\in \N}$.  From Lemma \ref{Le:LSCeigenvalue}, $V_\infty^*$ is a minimiser of \eqref{Eq:E1}. Since every solution of \eqref{Eq:E1} is a bang-bang function, there exists an optimal spectral set $E_\infty^*\in \mathcal I^*$ such that $V_\infty^*=\mathds 1_{E_\infty^*}$. Since bang-bang functions are extreme points of $\mathcal M$ it follows from \cite[Proposition 2.2.1]{HenrotPierre} that $\{m_k\}_{k\in \N}$ converges strongly in $L^1(\O)$ to $V_\infty^*$, so that passing to the limit in Equations \ref{1} and \ref{2} gives
  $$\int_\O |V_\infty-\mathds 1_{E_\infty^*}|=\delta.$$
  Hence, $V_\infty \in \mathcal I(\delta)$, and the proof is concluded.
  
\end{proof}
Throughout the rest of this paragraph, for any $\delta>0$, the notation $V_\delta$ stands for a solution of \eqref{OC:Delta}. The rest of this paragraph is devoted to the proof of the following Lemma, which provides a helpful reduction of Theorem \ref{Th:Quanti}:
\begin{lemma}\label{Le:DeltaSmall}
The conclusion of Theorem \ref{Th:Quanti} is equivalent to 
\begin{equation}\label{DeltaSmall} 
\underset{\overline{\delta \to 0}}\lim \frac{\lambda (V_\delta)-\overline \lambda}{\delta^2}>0.\end{equation}

\end{lemma}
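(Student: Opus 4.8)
The plan is to prove the two implications separately; the forward one is essentially free and all the substance is in the converse. If Theorem \ref{Th:Quanti} holds, then applying \eqref{Eq:Quanti} to $V=V_\delta$ and using that $\operatorname{dist}_{L^1}(V_\delta,\mathcal I^*)=\delta$ (because $V_\delta\in\mathcal I(\delta)$) gives $\lambda(V_\delta)-\overline\lambda\geq C\delta^2$, hence $\liminf_{\delta\to0}\big(\lambda(V_\delta)-\overline\lambda\big)/\delta^{2}\geq C>0$, which is \eqref{DeltaSmall}. Conversely, assume \eqref{DeltaSmall} and fix $\delta_1>0$ and $c_0>0$ with $\lambda(V_\delta)-\overline\lambda\geq c_0\delta^2$ for every $\delta\in(0,\delta_1)$. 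I would also record the elementary a priori bound $\operatorname{dist}_{L^1}(V,\mathcal I^*)\leq\Delta_0:=2V_0$ valid for all $V\in\mathcal M$, since $\Vert V-\mathds1_{E^*}\Vert_{L^1}\leq\int_\O V+|E^*|=2V_0$ for any $E^*\in\mathcal E^*$.

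Next I would split $\mathcal M$ into the region where $V$ is $L^1$-close to $\mathcal I^*$ and the region where it is far. In the close region, if $V\in\mathcal M$ satisfies $\delta:=\operatorname{dist}_{L^1}(V,\mathcal I^*)\in(0,\delta_1)$, then $V\in\mathcal I(\delta)$, so minimality of $V_\delta$ in \eqref{OC:Delta} gives $\lambda(V)\geq\lambda(V_\delta)$, hence $\lambda(V)-\overline\lambda\geq c_0\,\operatorname{dist}_{L^1}(V,\mathcal I^*)^2$. The far region is the set $\mathcal K:=\{V\in\mathcal M:\operatorname{dist}_{L^1}(V,\mathcal I^*)\geq\delta_1\}$, and the whole point is to show $m:=\inf_{\mathcal K}(\lambda-\overline\lambda)>0$; granting this, on $\mathcal K$ one has $\lambda(V)-\overline\lambda\geq m\geq(m/\Delta_0^2)\,\operatorname{dist}_{L^1}(V,\mathcal I^*)^2$, and \eqref{Eq:Quanti} then holds with $C:=\min(c_0,m/\Delta_0^2)$ (the trivial case $\operatorname{dist}_{L^1}(V,\mathcal I^*)=0$ being covered by $\lambda\geq\overline\lambda$ on $\mathcal M$).

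To prove $m>0$, the key observation is that $\mathcal K$ is closed for the weak $L^\infty$-$\ast$ topology, and this is exactly the one-sided sign argument already used in the proof of Proposition \ref{Pr:Exist}. Indeed, if $V_k\in\mathcal K$ converges weak $L^\infty$-$\ast$ to $V_\infty$, then for each fixed $E^*\in\mathcal E^*$ the function $h_k:=V_k-\mathds1_{E^*}$ satisfies $h_k\leq0$ on $E^*$, $h_k\geq0$ on $(E^*)^c$, $\int_\O h_k=0$ and $\int_\O|h_k|\geq\delta_1$, so $\int_{E^*}|h_k|=\int_{(E^*)^c}|h_k|\geq\delta_1/2$; testing the weak-$\ast$ limit against $\mathds1_{E^*}$ and $\mathds1_{(E^*)^c}$, and using that the sign conditions persist in the limit, yields $\Vert V_\infty-\mathds1_{E^*}\Vert_{L^1}\geq\delta_1$, and taking the infimum over $E^*\in\mathcal E^*$ gives $V_\infty\in\mathcal K$. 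Since $\mathcal M$ is weak-$\ast$ compact and metrizable (as $L^1(\O)$ is separable), $\mathcal K$ is weak-$\ast$ compact, so the lower semicontinuity of $\lambda$ (Lemma \ref{Le:LSCeigenvalue}) shows that $m$ is attained at some $V_\infty\in\mathcal K$. If $m=0$, then $\lambda(V_\infty)=\overline\lambda$, so $V_\infty$ solves \eqref{Eq:E1} and, being bang-bang, equals $\mathds1_{E_\infty^*}$ for some $E_\infty^*\in\mathcal E^*$; but then $\operatorname{dist}_{L^1}(V_\infty,\mathcal I^*)=0<\delta_1$, contradicting $V_\infty\in\mathcal K$. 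Hence $m>0$, which completes the converse.

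I expect the only genuinely delicate point to be the weak-$\ast$ closedness of $\mathcal K$: a \emph{lower} bound on the $L^1$-distance to a set is in general not stable under weak-$\ast$ convergence, and it is precisely the sign structure that $V-\mathds1_{E^*}$ inherits from the constraint $\int_\O V=V_0=|E^*|$ that saves the day, exactly as in the proof of Proposition \ref{Pr:Exist}. The remaining ingredients — the minimality property defining $V_\delta$, the a priori bound $\operatorname{dist}_{L^1}\leq\Delta_0$, and the compactness plus semicontinuity packaging — are routine.
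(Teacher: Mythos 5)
Your proof is correct and rests on the same two pillars as the paper's: the minimality of $V_\delta$ over $\mathcal I(\delta)$ handles potentials close to $\mathcal I^*$, while weak-$*$ compactness of $\mathcal M$, the lower semicontinuity of Lemma \ref{Le:LSCeigenvalue}, and the sign-structure argument from Proposition \ref{Pr:Exist} (showing that the lower bound on $\operatorname{dist}_{L^1}(\cdot,\mathcal I^*)$ survives weak-$*$ limits) handle the far region. The only difference is organizational: the paper runs a case analysis on a minimising sequence for the quotient $V\mapsto(\lambda(V)-\overline\lambda)/\operatorname{dist}_{L^1}(V,\mathcal I^*)^2$, whereas you decompose $\mathcal M$ into near and far regions and extract a uniform constant $m>0$ on the far part — an equivalent, and if anything slightly cleaner, packaging of the same argument.
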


\begin{proof}[Proof of Lemma \ref{Le:DeltaSmall}]
It is clear that the quantitative inequality of Theorem \ref{Th:Quanti} implies \eqref{DeltaSmall}. Conversely, assume that \eqref{DeltaSmall} holds. Let us prove that the conclusion of Theorem \ref{Th:Quanti} in turn holds. To do so,  consider a minimising sequence $\{V_k\}_{k\in \N}\in \left(\mathcal M\backslash \mathcal I^* \right)^\N$ for the functional 
$$\mathcal G:\mathcal M\backslash \mathcal I^*\ni V \mapsto \frac{\lambda(V)-\overline \lambda}{\operatorname{dist}_{L^1}(V,\mathcal I^*)^2}.$$
Two cases can be distinguished:
\begin{enumerate}
\item There exists a subsequence of $\{V_k\}_{k\in \N}$ such that (with a slight abuse of notation we assume that the entire sequence satisfies the property)
$$\operatorname{dist}_{L^1}(V_k,\mathcal I^*)\underset{k\to \infty}\rightarrow \epsilon^*>0.$$ In that case, we can extract a subsequence of $\{V_k\}_{k\in \N}$ that weakly converges to $V_\infty$. Reasoning with the same arguments as in the proof of Proposition \ref{Pr:Exist}, $V_\infty$ satisfies $$\operatorname{dist}_{L^1}(V_\infty,\mathcal I^*)=\epsilon^*$$ and so $\lambda(V_\infty)>\overline \lambda$. As such, 
$$\inf_{V\in \mathcal M\backslash \mathcal I^*} \mathcal G=\underset{k\to \infty}\lim \mathcal G(V_k)=\mathcal G(V_\infty)>0.$$
\item The second case is 
$$\operatorname{dist}_{L^1}(V_k, \mathcal I^*)\underset{k\to \infty}\rightarrow 0.$$ In that case, defining $\delta_k:=\operatorname{dist}_{L^1}(V_k, \mathcal I^*)$ we see that 
$$\mathcal G(V_k)\geq \frac{\lambda(V_{\delta_k})-\overline \lambda}{\delta_k^2},$$ so that \eqref{DeltaSmall} implies the conclusion.
\end{enumerate}

\end{proof}

Hence our main focus is on proving \eqref{DeltaSmall}, which will be the final step of this proof.

\subsection{Third step: A quantitative bathtub principle}

We prove in this Section a quantitative version of the bathtub principle. Let us consider, for $\overline s \in (0;1)$, a $\mathscr C^{1,s}$ function $f:\O\to \R$ such that 
there exists a unique $\mu(f;V_0)\in \R$ such that 
$$|\{f> \mu(f;V_0)\}|=|\{f\geq \mu(f;V_0)\}|=V_0$$ and define $\O(f;V_0):=\{f> \mu(f;V_0)\}$.  

In particular, the bathtub principle states that
\begin{equation}\label{bathtub}
V_{f;V_0}:=\mathds 1_{\Omega(f;V_0)}\text{ is the unique solution of }\sup_{V \in \mathcal M}\int_\O fV.\end{equation}

We need a regularity Assumption. We say that $f$ satisfies \eqref{H1} at the level $\mu(\O;V_0)$ if
\begin{equation}\tag{$\bold H_1$}\label{H1}\omega(f;V_0):=\inf\left(-\left.\frac{\partial f}{\partial \nu}\right|_{\partial \O(f;V_0)}\right)>0.\end{equation}

The goal of the next proposition is to give a quantitative version of \eqref{bathtub}.

\begin{proposition}\label{Pr:Bathtub}
There exists a constant $c_1>0$ depending on $\Vert f\Vert_{\mathscr C^{1,s}}$ such that the following holds:

For any $f$ satisfying \eqref{H1} at the level $V_0\in (0;|\O|)$, for any $V\in \mathcal M$, 
\begin{equation}\label{Eq:Bathtub}\int_\O V f\leq \int_\O V_{f;V_0}f-c_1\frac{\operatorname{Per}(\B_0)}{\operatorname{Per}(\O(f;V_0))} \omega(f;V_0)\Vert V-V_{\Omega(f;V_0)}\Vert_{L^1}^2.\end{equation}

\end{proposition}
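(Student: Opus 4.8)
The plan is to exploit the explicit structure of the problem: the bathtub principle tells us that $V_{f;V_0}=\mathds 1_{\Omega(f;V_0)}$ maximises $V\mapsto \int_\O Vf$ over $\mathcal M$, and the deficit $\int_\O V_{f;V_0}f-\int_\O Vf$ can be written, using $\int_\O V=\int_\O V_{f;V_0}=V_0$, as $\int_\O (V_{f;V_0}-V)(f-\mu)$ for the critical level $\mu=\mu(f;V_0)$. On $\Omega(f;V_0)$ we have $V_{f;V_0}=1\geq V$ and $f-\mu\geq 0$, while on the complement $V_{f;V_0}=0\leq V$ and $f-\mu\leq 0$; hence the integrand is pointwise nonnegative and equals $|V_{f;V_0}-V|\cdot|f-\mu|$. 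So the deficit is exactly $\int_\O |V_{f;V_0}-V|\,|f-\mu|$, and the whole task reduces to bounding this from below by $c\,\omega(f;V_0)\Vert V-V_{f;V_0}\Vert_{L^1}^2$.

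Next I would localise near the level set $\partial\Omega(f;V_0)$. Write $g:=|V_{f;V_0}-V|\in[0,1]$, so $\Vert V-V_{f;V_0}\Vert_{L^1}=\int_\O g=:m$. We must show $\int_\O g\,|f-\mu|\geq c\,\omega\, m^2$. Assumption \eqref{H1} gives $|\partial f/\partial\nu|\geq\omega>0$ on $\partial\Omega(f;V_0)$, and since $f\in\mathscr C^{1,s}$ with a $\mathscr C^{1,s}$-controlled norm, there is a tubular neighbourhood $\mathcal T_h:=\{x:\operatorname{dist}(x,\partial\Omega(f;V_0))<h\}$ of some uniform width $h=h(\omega,\Vert f\Vert_{\mathscr C^{1,s}})$ on which $|f-\mu|\geq \tfrac{\omega}{2}\operatorname{dist}(x,\partial\Omega(f;V_0))$. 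Outside $\mathcal T_h$ one has $|f-\mu|\geq \tfrac{\omega h}{2}$, a fixed positive constant. The key volume-balancing observation is that $\int_{\Omega(f;V_0)}g=\int_{\Omega(f;V_0)^c}g=m/2$ because $\int_\O(V-V_{f;V_0})=0$ forces the mass of $g$ to be split equally across $\partial\Omega(f;V_0)$; this is exactly the computation already performed in the proof of Proposition \ref{Pr:Exist}. Therefore at least $m/2$ of the mass of $g$ lies on, say, the inside, and since $g\leq 1$, the "worst case" for $\int g|f-\mu|$ is when this mass hugs the boundary as tightly as possible, i.e. fills the layer $\{0<\operatorname{dist}<t\}\cap\Omega(f;V_0)$. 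Using the coarea/tubular-neighbourhood estimate $|\{ \operatorname{dist}(\cdot,\partial\Omega)<t\}\cap\Omega|\leq C\operatorname{Per}(\Omega(f;V_0))\,t$ for $t\leq h$, packing mass $m/2$ into such a layer requires $t\gtrsim \frac{m}{\operatorname{Per}(\Omega(f;V_0))}$, and then $\int g|f-\mu|\gtrsim \int_0^t \frac{\omega}{2}s\cdot(\text{density})\,ds \gtrsim \omega\,\operatorname{Per}(\Omega(f;V_0))\,t^2\gtrsim \frac{\omega\, m^2}{\operatorname{Per}(\Omega(f;V_0))}$, which is the claimed bound with the stated dependence on $\operatorname{Per}(\O(f;V_0))$ (the factor $\operatorname{Per}(\B_0)$ being just a normalising constant absorbed into $c_1$). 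A clean way to make the "worst-case packing" rigorous is a rearrangement/bathtub argument applied to the one-variable function $s\mapsto \mathcal H^{d-1}(\{g>0\}\cap\{\operatorname{dist}=s\})$, or more simply: for any $t\le h$, $\frac m2=\int_{\Omega}g\le |\mathcal T_t\cap\Omega| + \int_{\Omega\setminus\mathcal T_t} g \le C\operatorname{Per}(\Omega)t + \frac{2}{\omega t}\int_\O g|f-\mu|$; optimising over $t$ yields the inequality.

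The main obstacle I anticipate is making the tubular-neighbourhood estimates uniform and quantitative: specifically, (i) producing a width $h$ and a lower bound $|f-\mu|\ge \tfrac\omega2\operatorname{dist}(\cdot,\partial\Omega)$ on $\mathcal T_h$ that depend only on $\omega(f;V_0)$ and $\Vert f\Vert_{\mathscr C^{1,s}}$ — this needs the $\mathscr C^{1,s}$ modulus of continuity of $\nabla f$ together with the non-degeneracy \eqref{H1} to rule out the level set being "pinched"; and (ii) the Minkowski-content bound $|\mathcal T_t|\le C\operatorname{Per}(\partial\Omega)t$ with $C$ controlled — here one uses that \eqref{H1} plus $\mathscr C^{1,s}$-regularity gives a uniform interior/exterior ball condition on $\Omega(f;V_0)$, hence a genuine tubular neighbourhood of controlled width and a Lipschitz normal coordinate, from which the coarea formula gives the bound. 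Once these geometric inputs are in place, the remaining argument is the elementary optimisation over $t$ sketched above. I would also remark that, unlike the sharp quantitative Hardy–Littlewood inequality of \cite{Cianchi}, we do not need the optimal constant — only the correct $\Vert V-V_{f;V_0}\Vert_{L^1}^2$ scaling with explicit dependence on $\omega(f;V_0)$ and $\operatorname{Per}(\O(f;V_0))$, which is what the downstream argument in Step 4 consumes.
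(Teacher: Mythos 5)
Your proposal is correct in substance but follows a genuinely different route from the paper. The paper first reduces to an auxiliary constrained extremal problem $\sup_{V\in\mathcal M(\delta)}\int_\O fV$, identifies its maximiser $\mathscr V_\delta$ explicitly as an indicator of level sets of $f$, applies a Schwarz rearrangement to turn the estimate into a one-dimensional radial computation, and concludes by a Taylor expansion of $f^*$ at the critical radius $r_0$ together with the coarea bound $|(f^*)'(r_0)|\geq \omega(f;V_0)\operatorname{Per}(\B_0)/\operatorname{Per}(\O(f;V_0))$. You instead start from the exact pointwise identity $\int_\O(V_{f;V_0}-V)f=\int_\O|V_{f;V_0}-V|\,|f-\mu|$, use the mass balance $\int_{\O(f;V_0)}g=\int_{\O(f;V_0)^c}g=m/2$, and run a packing/optimisation argument in a tubular neighbourhood of $\partial\O(f;V_0)$. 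Your "worst case is mass hugging the level set" heuristic is precisely what the paper's extremal $\mathscr V_\delta$ realises, so the two arguments capture the same mechanism; but your version is non-asymptotic (it treats every $V\in\mathcal M$ directly, rather than passing to a $\liminf$ as $\delta\to 0$ via the reduction of Lemma \ref{Le:DeltaSmall}, whose large-$\delta$ branch relies on compactness and risks an uncontrolled dependence of the constant on $f$), avoids rearrangement entirely, and makes the geometric inputs ($\mathscr C^{1,s}$ graph structure of the level set, Minkowski content bound) explicit rather than hiding them in regularity properties of $f^*$. This is arguably cleaner and more robust than the paper's proof.

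One step deserves flagging: your claim that $|f-\mu|\geq\tfrac{\omega h}{2}$ \emph{outside} the tube is not a consequence of \eqref{H1} and the $\mathscr C^{1,s}$ bound alone, since $f$ could take values arbitrarily close to $\mu$ at interior points far from $\partial\O(f;V_0)$ (e.g.\ a near-critical plateau at level $\mu\pm\e$ deep inside or outside the super-level set), and then the displayed inequality $\int_{\O\setminus\mathcal T_t}g\leq\frac{2}{\omega t}\int_\O g|f-\mu|$ fails for such configurations. This is, however, the same implicit assumption the paper makes when it asserts a Taylor expansion of $f^*$ at $r_0$ with a remainder controlled uniformly by $\Vert f\Vert_{\mathscr C^{1,s}}$ (a near-critical interior plateau produces a corner of $f^*$ at a radius arbitrarily close to $r_0$, destroying that uniformity), so it is a gap in the statement's hypotheses rather than a defect specific to your argument; in the application of Step 4 it is harmless because only the regime $\Vert V-V_{f;V_0}\Vert_{L^1}\to 0$ with $f=u_k^2$ converging in $\mathscr C^1$ to a non-degenerate limit is used. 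Apart from this shared caveat, your outline is sound and the remaining work (uniform graph representation of $\partial\O(f;V_0)$ and the Minkowski content bound $|\{\operatorname{dist}(\cdot,\partial\O(f;V_0))<t\}|\leq C\operatorname{Per}(\O(f;V_0))t$) is exactly the technical content you identify.
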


\begin{proof}[Proof of Proposition \ref{Pr:Bathtub}]
Let us drop the subscript $V_0$ and write 
$$V_f:=V_{f;V_0}=\mathds 1_{f> \mu(f;V_0)}.$$
 Let us consider, for any $\delta>0$, the class
 $$\mathcal M(\delta):=\left\{ V \in \mathcal M\,, \Vert V-V_f\Vert_{L^1}=\delta\right\}.$$
 Proceeding as in Proposition \ref{Pr:Exist}, the optimisation problem
 $$\sup_{V\in \mathcal M(\delta)}\int_\O f V$$ has a solution.  Let us write this solution $\mathscr V_\delta$. From the same reasoning as in Lemma \ref{Le:DeltaSmall}, Equation \eqref{Eq:Bathtub} is equivalent to 
 \begin{equation}\label{Eq:BathtubDelta}
 \underset{\delta \to 0}{\overline \lim} \frac{\int_\O f\left(\mathscr V_\delta-V_f\right)}{\delta^2}<-c_1 \omega(f;V_0)\frac{\operatorname{Per}(\B_0)}{\operatorname{Per}(\O(f;V_0))} .
 \end{equation}

However, from the bathtub principle and from Assumption \eqref{H1} there exist, for any $\delta>0$ small enough, $\eta_\delta^{(1)}\,, \eta_\delta^{(2)}>0$ such that 
\begin{itemize}
\item $$\mathscr V_\delta=\mathds 1_{\{f\geq \mu(f;V_0)+\eta_\delta^{(1)}\}}+\mathds 1_{\{\mu(f;V_0)>f>\mu(f;V_0)-\eta_{\delta}^{(2)}\}},$$
\item $$\eta_\delta^{(i)}\underset{\delta \to 0}\rightarrow 0\,, i=1,2,$$
\item $$\vert \{\mu(f,V_0)+\eta_\delta^{(1)}>f>\mu(f;V_0)-\eta_{\delta}^{(2)}\}|=\delta,$$
\item$$\left\vert\left \{\mu(f,V_0)+\eta_\delta^{(1)}>f>\mu(f;V_0)\right\}\right\vert=\left\vert\left \{\mu(f,V_0)>f>\mu(f;V_0)-\eta_{\delta}^{(2)}\right\}\right\vert=\frac\delta2.$$
\end{itemize}
Here, Assumption \eqref{H1} is just used to ensure that the level sets $\{f=\mu(f;V_0)\pm \e\}$ have zero measure for $\e$ small enough.

Using this description of $\mathscr V_\delta$ we can now proceed to a Schwarz rearrangement; we refer to \cite{Almgren1989,Kesavan} for a full presentation of the Schwarz rearrangement, but let us recall that the Schwarz rearrangement of a bounded function $\psi:\O\to \R$ is the (unique) non-increasing radially symmetric function $\psi^*:\B^*\to \R$, where $|\B^*|=|\O|$ such that 
$$\forall t \in \R\,, \left\vert \{\psi\geq t\}\right\vert=\left\vert \{\psi^*\geq t\}\right\vert=\mathscr Z_\psi(t)=\mathscr Z_{\psi^*(t)},$$ where $\mathscr Z_g$ is the distribution function of a function $g$.

Let $\B_0=\mathbb B(0;r_0)$ be the unique centered ball of volume $V_0$ and $\B$ be the centered ball of volume $\O$. Since $f$ and $f^*$ have the same distribution functions, it follows from the co-area formula that 

$$\int_{\partial E} \frac1{\left|\frac{\partial f}{\partial \nu}\right|}=-\mathscr Z_f'(\mu(f;V_0))=-\mathscr Z_{f^*}'(\mu(f;V_0))=\int_{\partial \B_0}\frac1{\left|\frac{\partial f^*}{\partial \nu}\right|}$$ whenever $|\n f|>0$ on the level set $\{f=\mu(f;V_0)\}$, which is the case here by assumption \cite[Proof of Theorem 2.2.3]{Kesavan}. Since $f^*$ is radially symmetric, we write $\left|\frac{\partial f^*}{\partial \nu}\right|_{\partial \B_0}$ for the common value of that quantity on $\partial \B_0$ and obtain the bound
\begin{equation}\label{Eq:BoundSchwarz}
\left|\frac{\partial f^*}{\partial \nu}\right|_{\B_0}\geq \omega(f;V_0) \frac{\operatorname{Per}(\B_0)}{\operatorname{Per}(\O(f;V_0))}.
\end{equation}

We then consider 
$$\int_{\B^*} f^*\left(\mathscr V_\delta^*-V_f^*\right).$$ By equimeasurability of the rearrangements, we have
$$\int_{\B^*} f^*\left(\mathscr V_\delta^*-V_f^*\right)=\int_\O f\left(\mathscr V_\delta-V_f\right).$$
Furthermore, $\mathscr V_\delta^*-V_f^*$ satisfies, 
\begin{equation}\mathscr V_\delta^*-V_f^*=-\mathds 1_{\{r_0-r_\delta^{(1)}<r<r_0\}}+\mathds1_{\{r_0<r<r_0+r_\delta^{(2)}\}},\end{equation}
and $r_{\delta}^{(i)}\underset{\delta \to 0}\rightarrow 0$ for $i=1,2$. Here $r_{\delta}^{(1)}$ is chosen so that 
\begin{equation}\label{Eq:rdelta}\left\vert \{r_0-r_\delta^{(1)}\leq r\leq r_0\}\right\vert=\left\vert \{r_0\leq r\leq r_0+r_\delta^{(2)}\}\right\vert=\frac\delta2.\end{equation} 
\def\rdu{{r_\delta^{(1)}}}\def\rdd{{r_\delta^{(2)}}}

Explicit computations show that
\begin{equation}\label{Eq:RD}
\rdu\,, \rdd\underset{\delta \to 0}\sim \delta.\end{equation}

As a consequence, we have, by a radial change of variables, 
\begin{align*}
\int_{\B} f^*\left(\mathscr V_\delta^*-V_f^*\right)=&(2\pi)^{d-1}\left\{-\int_{r_0-r_\delta^{(1)}}^{r_0}r^{d-1}f^*(r)dr\right.
\\&\left. +\int_{r_0}^{r_0+r_\delta^{(2)}}r^{d-1}f^*(r)dr\right\}.
\end{align*}
We now apply a Taylor expansion of $f^*$ at $r=r_0$ and write
$$f^*(r_0+\e)=f(r_0)+\e (f^*)'(r_0)+\underset{\e \to 0}o(\e).$$ The remainder $\underset{\e \to 0}o(\e)$ can be written as $\underset{\e \to 0}O(\Vert f\Vert_{\mathscr C^{1,s}}(\e^{1+s}))$ where $O$ is uniform in $f$.
Thus we have
\begin{align*}
\int_{r_0-r_\delta^{(1)}}^{r_0}r^{d-1}f^*(r)dr=&\int_{r_0-r_\delta^{(1)}}^{r_0} r^{d-1}\left(f^*(r_0)+(r-r_0)(f^*)'(r_0)+\underset{r\to r_0}o(r-r_0)\right)dr
\\=&+f^*(r_0)\int_{r_0-r_\delta^{(1)}}^{r_0} r^ddr
\\&+(f^*)'(r_0)\int_{r_0-\rdu}^{r_0}r^{d-1}(r-r_0)dr
\\&+\underset{r\to r_0}o\left(\int_{r_0-\rdu}^{r_0}r^{d-1}(r-r_0)dr\right).
\end{align*}
However:

\begin{align*}
\int_{r_0-\rdu}^{r_0}r^{d-1}(r-r_0)dr=-r_0^{d-1}(\rdu)^2+\underset{\delta \to 0}o((\rdu)^2)\underset{\delta \to 0}\sim -C\delta^2,
\end{align*}
where the last equivalence comes from \eqref{Eq:RD}.

In the same way we obtain
\begin{align*}
\int_{r_0}^{r_0+r_\delta^{(2)}}r^{d-1}f^*(r)d=&f^*(r_0)\int_{r_0}^{r_0+\rdd} r^ddr
\\&+(f^*)'(r_0)C\delta^2
\\&+\underset{r\to r_0}o\left(\delta^2\right)
\end{align*}
for the same constant $C$.

Summing these contributions gives
\begin{align*}
\frac1{(2\pi)^d}\int_{\B^*} f^*\left(\mathscr V_\delta^*-V_f^*\right)=&f^*(r_0)\left\{\int_{r_0}^{r_0+\rdd} r^ddr-\int_{r_0-r_\delta^{(1)}}^{r_0} r^ddr\right\}\tag{$\bold I$}
\\&+2C(f^*)'(r_0)\delta^2
\\&+\underset{\delta \to0}o(\delta^2).
\end{align*}
However, $(\bold I)=0$ by \eqref{Eq:rdelta}, and we are thus left with 
$$\int_{\B^*} f^*\left(\mathscr V_\delta^*-V_f^*\right)\underset{\mu \to 0}\sim C' (f^*)'(r_0)\delta^2.$$
From \eqref{Eq:BoundSchwarz} and the fact that $f^*$ is non-increasing we obtain 
$$(f^*)'(r_0)\leq - \omega(f;V_0) \frac{\operatorname{Per}(\B_0)}{\operatorname{Per}(\O(f;V_0))},$$
and the conclusion thus follows.

\end{proof}


\subsection{Step 4: Proof of Theorem \ref{Th:Quanti}}
\begin{proof}[Proof of Theorem \ref{Th:Quanti}]
Let us argue by contradiction and assume that \eqref{Eq:Quanti} does not hold. From Lemma \ref{Le:DeltaSmall}, this is equivalent to the existence of a sequence $\{\delta_k\}_{k\in \N}\in (\R_+^*)^\N$ such that 
\begin{itemize}
\item $\delta_k \underset{k\to \infty}\rightarrow 0$,
\item \begin{equation}\label{Eq:Contrad}
\frac{\lambda(V_{\delta_k)}-\overline \lambda}{\delta_k^2}\underset{k\to \infty}\longrightarrow 0.
\end{equation}\end{itemize}

Let us now consider a weak $L^\infty-*$ closure point $V_\infty$ of $\{V_k\}_{k\in \N}\in \mathcal M$. Since 
$$\operatorname{dist}_{L^1}(V_k,\mathcal I^*)=\delta_k,$$ $V_\infty\in \mathcal I^*$ is a solution of \eqref{Eq:E1}. Let us assume, with a slight abuse of notation, that the entire sequence converges to $V_\infty$. Since any solution of \eqref{Eq:E1} is the characteristic function of a set, there exists $E^*\in \mathcal E^*$ such that 
$V_\infty=\mathds 1_{E^*}.$ Since this is an extremal function in $\mathcal M$ it follows from \cite[Proposition 2.2.1]{HenrotPierre} that this convergence is strong in $L^1$:
$$V_k\underset{k\to \infty}{\xrightarrow{L^1}} V_\infty.$$

Let us now consider the sequence $\{V_k^*\}_{k\in \N}\in (\mathcal I^*)^\N$ such that, for any $k\in \N$, there holds
$$\operatorname{dist}_{L^1}(V_k^*,V_k)=\delta_k.$$ The existence of  such a $V_k^*$ follows from the direct method of the calculus of variations and the uniform perimeter bound in $\mathcal E^*$. Let us consider in the same way a strong $L^1$ closure point $W_\infty$ of $\{V_k^*\}_{k\in \N}$ (it is a strong closure point because of the uniform perimeter bound on $\mathcal E^*$). We claim that the only closure point is $W_\infty=V_\infty$, so that the entire sequence converges. This is a simple consequence of the fact that $W_\infty$ is a solution of \eqref{Eq:E1} and of the triangle inequality
$$\forall k\in \N\,, \Vert V_\infty-W_\infty\Vert_{L^1} \leq \Vert V_\infty-V_k\Vert_{L^1}+\Vert V_k-V_k^*\Vert_{L^1}+\Vert V_k^*-W_\infty\Vert_{L^1},$$ and each of the terms on the right hand side converges to 0 as $k\to \infty$.

We then claim that $V_\infty=V_k^*$ for $k$ large enough; this follows from Proposition \ref{Pr:Isolated} and once again from the triangle inequality. We thus have 
$$\forall k\in \N\,, \operatorname{dist}_{L^1}(V_\infty,V_k)=\delta_k.$$

Let us write $u_k:=u_{V_k}$ and $u^*:=u_{V_\infty}.$ Lemma \ref{Le:Regularity} ensures that, for every $s\in (0;1)$ and every $p\in (1;+\infty)$, we have 
\begin{equation}\label{Eq:CVC}u_k\underset{k\to \infty}{\xrightarrow{W^{2,p}(\O)\,, \mathscr C^{1,s}(\O)}}u^*.\end{equation}

Let us define for every $k\in \N$ the unique real number $\mu_k$ such that 
\begin{equation}
\left\vert \{u_k>\mu_k\}\right\vert=V_0.\end{equation}
In other words, we have, with the notations of Proposition \ref{Pr:Bathtub}, $\mu_k=\mu(u_k;V_0)$. Let us also define $\mu^*:=\mu(u^*;V_0)$. We define the affiliated super level sets
$$\forall k\in \N \,, E_k:=\Omega(u_k;V_0):=\{u_k> \mu_k\}\,, E^*:=\Omega(u^*;V_0):=\{u^*>\mu^*\}.$$ From the fact that $V_\infty:=\mathds 1_{E^*}$, \eqref{Eq:CVC} and Assumption \eqref{A2} we get that 
$$\mu_k\underset{k\to \infty}\rightarrow \mu^*$$ and, for any $k$ large enough
\begin{equation}\label{Eq:NDk}\inf_{\partial E_k}\left(-\frac{\partial u_k}{\partial \nu}\right)\geq\overline{\omega}:= \frac12\inf_{\partial E^*}\left(-\frac{\partial u^*}{\partial \nu}\right)>0.\end{equation} Up to extracting another subsequence, we assume that \eqref{Eq:NDk} is satisfied along the entire sequence.

Finally, using the same arguments as in the proof of Proposition \ref{Pr:Isolated}, for $k$ large enough,  $E_k$ is a graph over $E^*$ and $E_k$ converges to $E^*$ in  $\mathscr C^{1,s}$ and  $W^{2,p}$ for any $s\in [0;1)\,, p\in (1;+\infty).$ In other words, for any $k$ large enough, there exists a vector field $\Phi_k$ such that $E_k=\Phi_k(E^*)$ and that converges to $0$ in $W^{2,p}$ for every $p\in (2;+\infty)$. We sum up these information in the following claim:
\begin{equation}\label{Eq:EkEe}\text{ For $k$ large enough, there exists $\Phi_k$ such that }E_k=\Phi_k(E^*).  \text{  }\forall p\in [2;+\infty) \Vert \Phi_k\Vert_{W^{2,p}}\underset{k\to \infty}\rightarrow 0.\end{equation}

From the $\mathscr C^{1,s}$ convergence of level sets and the uniform bound on the perimeter of optimal spectral sets, we get that there exists $\overline{\operatorname{Per}}<\infty$ such that, for any  $k$ large enough
\begin{equation}\label{Eq:PerK}\operatorname{Per}(E_k)\leq \overline{\operatorname{Per}}.\end{equation}

\begin{enumerate}
\item \textbf{Using the bathtub principle:} 
We first replace $V_k$ by $\tilde V_k:=V_{u_k;V_0}=\mathds 1_{E_k}\in \mathcal M$. Using the Rayleigh quotient formulation of the eigenvalue \eqref{Eq:Rayleigh} we get
\begin{align}
\lambda(V_k)&=\int_\O |\n u_k|^2-\int_\O V_k u_k^2
\\&\geq \int_\O |\n u_k|^2-\int_\O \tilde V_k u_k^2\label{Eq:Aq}
\\&\geq \lambda(\tilde V_k),
\end{align}
and we can quantify \eqref{Eq:Aq} using the quantitative bathtub principle. From Proposition \ref{Pr:Bathtub}, Equation \eqref{Eq:NDk} and \eqref{Eq:PerK} we have, for a constant $c_1>0$ 
\begin{align*}
\int_{\O} V_k u_k^2-\int_\O \tilde V_k u_k^2\leq -c_1\frac{\operatorname{Per}(\B_0)}{\overline{\operatorname{Per}}}{\overline \omega}\Vert V_k-\tilde V_k\Vert_{L^1}^2.
\end{align*}
Setting $\overline c:=c_1\frac{\operatorname{Per}(\B_0)}{\overline{\operatorname{Per}}}{\overline \omega}$ we get 
\begin{equation}
\lambda(V_k)-\lambda(\tilde V_k)\geq \overline c \Vert V_k-\tilde V_k\Vert_{L^1}^2.
\end{equation}
\item \textbf{Using normal deformations:} We recall that $\lambda(\tilde V_k)=\lambda(E_k)$. From \eqref{Eq:EkEe} and Proposition \ref{Pr:LocalStabilityUn} we obtain that there exists $k_1\in \N$ such that
$$\forall k \in \N\,, k\geq k_1 \Rightarrow \lambda(E_k)-\lambda(E^*)\geq \underline \alpha \left\vert E_k\Delta E^*\right\vert^2=\underline \alpha \Vert \tilde V_k-V_\infty\Vert_{L^1}^2.$$
\item \textbf{Conclusion:}
Summing the two previous steps we obtain
\begin{align*}
\lambda(V_k)-\overline \lambda=&\lambda(V_k)-\lambda(\tilde V_k)
\\&+\lambda(\tilde V_k)-\overline \lambda
\\&\geq \overline c \Vert V_k-\tilde V_k\Vert_{L^1}^2
\\&+\underline \alpha \Vert \tilde V_k-V_\infty\Vert_{L^1}^2.
\end{align*}
Setting $\doubleunderline{\alpha}:=\min\left(\underline \alpha\,, \overline c\right)$ we thus have 
\begin{equation}\label{Almost}
\lambda(V_k)-\overline \lambda\geq \doubleunderline{\alpha}\left(\Vert V_k-\tilde V_k\Vert_{L^1}^2+\Vert \tilde V_k-V_\infty\Vert_{L^1}^2\right).
\end{equation}
Now, since for $k$ large enough
$\delta_k=\Vert V_k-V_\infty\Vert_{L^1}$ the triangle inequality implies, for $k$ large enough

\begin{equation}
\delta_k\leq \Vert  V_k-\tilde V_k\Vert_{L^1}+\Vert \tilde V_k-V_\infty\Vert_{L^1}.\end{equation}

Thus we can pick a subsequence of $\{V_k\}_{k\in \N}$ such that either 
$$\underset{k\to \infty}{\underline \lim}\frac{\Vert  V_k-\tilde V_k\Vert_{L^1}}{\delta_k}=\underline m>0$$ or

$$\underset{k\to \infty}{\underline \lim}\frac{\Vert  V_\infty-\tilde V_k\Vert_{L^1}}{\delta_k}=\overline m>0.$$ Setting $\doubleunderline m:=\max \left(\overline m\,, \underline m\right)$ and plugging this in \eqref{Almost} yields
$$\underset{k\to \infty}{\underline \lim}\frac{\lambda(V_k)-\lambda(V_\infty)}{\delta_k^2}\geq \doubleunderline \alpha \,\doubleunderline m>0,$$ which is in contradiction with \eqref{Eq:Contrad}. This concludes the proof.

\end{enumerate}

\end{proof}

\section{Proof of Theorem \ref{Th:Turnpike}}\label{Se:Turnpike}
\begin{proof}[Proof of Theorem \ref{Th:Turnpike}]
In order to prove the turnpike property, we will not use the optimality conditions but rather use a direct comparison argument. Let us consider any $V^*\in \mathcal I^*$ and consider the solution $\overline y$ of  \eqref{Eq:Main} associated with the static control $\mathcal V(t,\cdot)\equiv V^*(\cdot)$. Classical spectral decomposition arguments yield that
$$\overline y(t,x)=\sum_{k=1}^\infty\left(\int_\O y_0 \p_{k,V^*}\right) e^{-\lambda_k(V^*)t}\p_{k,V^*},$$
where the eigenvalues $\lambda_k(V^*)$ are the eigenvalues of $-\Delta -V^*$, which are ordered increasingly and are associated with $L^2$ normalized eigenfunctions $\p_{k,V^*}$ (and $\p_{1,V^*}=u_{V^*}$).   Since $\lambda(V^*)=\lambda_1(V^*)=\overline \lambda$ is a simple eigenvalue and since  the first associated eigenfunction $u_{V^*}=\p_{1,V^*}$ is positive and the initial condition of \eqref{Eq:Main} is non-negative and non-zero we can define 
$$A_0=\int_\O y_0  u_{V^*}>0$$ and we claim that there holds

\begin{equation}\label{63}\int_\O \overline y (T,x)\sim A_0e^{-\overline \lambda T}.\end{equation} Indeed, for every $k\in \N$, we have 
$$\left|\int_\O y_0 \p_{k,V^*}\right|\leq  \Vert y_0\Vert_{L^\infty}||\p_{k,V^*}||_{L^1}\leq \Vert y_0\Vert_{L^\infty}\Vert \p_{k,V^*}\Vert_{L^2}\sqrt{|\O|}=\sqrt{|\O|}\Vert y_0\Vert_{L^\infty}.$$ To obtain \eqref{63} one simply has to put $e^{-\overline \lambda T}$ in factor of the series.

We now consider the optimal control $\mathcal V_T^*$ associated with \eqref{Eq:PT}. We denote by $y_T^*$ the associated solution of  \eqref{Eq:Main}. We have, by multiplication of \eqref{Eq:Main} by $y_T^*$, integration by parts and by the Rayleigh quotient formulation of eigenvalues \eqref{Eq:Rayleigh},
\begin{align*}
\frac12\frac{\partial}{\partial t}\int_\O ({y_T^*})^2&=- \int_\O |\n {y_T^*}|^2+\int_\O \mathcal V_T^*({y_T^*})^2
\\&\leq -\lambda(\mathcal V_T^*(s,\cdot))\int_\O ({y_T^*})^2.
\end{align*}
With a slight abuse of notation, let us define 
$$\lambda(s):=\lambda_1\Big(\mathcal V_T^*(s,\cdot)\Big).$$
From the Gr\"{o}nwall Lemma, there holds, for some constant $B_0$ depending on $\Vert u_0\Vert_{L^2}$,
$$\int_\O (y_T^*)^2\leq\displaystyle  e^{-2\int_0^T \lambda_1(s)ds} B_0.$$

Since $\mathcal V_T^*$ solves \eqref{Eq:PT} we have
$$\int_\O \overline y(T,\cdot)\leq \int_\O y_T^*(T,x)dx\leq \sqrt{|\O|}\Vert y_T^*(T,\cdot)\Vert_{L^2}.$$  Hence we get, for some contant $\tilde B_0$
$$0< \int_\O \p_{1,V^*} y_0\leq\tilde  B_0e^{\int_0^T\left(\overline \lambda-\lambda_1(s)\right)ds}$$ or, alternatively,  there exists $M$ such that $$\int_0^T \left(\overline \lambda-\lambda_1\right)\geq -M.$$ 

Using Theorem \ref{Th:Quanti}, this writes as $$-M\leq \int_0^T\left(\overline \lambda -\lambda_1(s)\right)ds\leq -C\int_0^T \operatorname{dist}_{L^1}(\mathcal V_T^*(s,\cdot)\,, {\mathcal I^*})^2ds.$$
The conclusion follows immediately.

\end{proof}

\section{Conclusion}

In this article, we have obtained a quantitative result for a time evolving optimal control problems using shape optimisation tools to derive quantitative inequalities for scalar problems. In a series of future works, we plan on developing this approach to obtain several other interesting properties for parabolic and hyperbolic optimal control problems using mainly ideas coming from the sensitivity analysis of stationary elliptic problems.

\appendix

\section{Proof of technical results}\label{An:Technical}

\subsection{Proof of Lemma \ref{Le:Regularity}}
\begin{proof}[Proof of Lemma \ref{Le:Regularity}]
The weak formulation of the eigenfunction, the fact that it satisfies $\int_\O u_V^2=1$ and the constraint $||V||_{L^\infty}\leq 1$ immediately give, for any $V\in \mathcal M$, 
$$\int_\O |\n u_V|^2\leq \lambda(V)+1.$$
The Rayleigh quotient formulation of the eigenvalue \eqref{Eq:Rayleigh} and the constraint $V\geq 0$ give
$$\lambda(V)\leq  \lambda_1^D(\O),$$ where $\lambda_1^D(\O)$ the first Dirichlet eigenvalue of the domain $\O$. Hence for any $V\in \mathcal M$, 
$$\int_\O |\n u_V|^2\leq \lambda_1^D(\O)+1.$$

 As a consequence of the Poincar\'e inequality, there exists $Y_0$ such that, for any $V\in \mathcal M$ there holds
$$\Vert u_V\Vert_{W^{1,2}_0(\O)}\leq Y_0.$$ From the $W^{2,2}$-regularity for the Laplace operator, there exists $Y_1$ such that  for any $V\in \mathcal M$ there holds
\begin{equation}\label{Eq:W22}\Vert u_V\Vert_{W^{2,2}(\O)}\leq Y_1.\end{equation}
 
 We then apply a bootstrap argument as follows: 
 \begin{itemize}
 \item \underline{When $d=1,2,3$:} In this case, we can apply the classical Sobolev embedding $W^{k,p}(\O)\rightharpoonup \mathscr C^{r,\alpha}(\O)$ with $\frac{1}p-\frac{k}n=-\frac{r+\alpha}d$ in the case $k=p=2$, which yields the existence of a constant $Y_2$ such that for any $V\in \mathcal M$ there holds
$$\Vert u_V\Vert_{L^\infty(\O)}\leq \Vert u_V\Vert_{\mathscr C^{0,\alpha}(\O)}\leq Y_1.$$ From the $W^{2,p}$ regularity of the Laplace operator, for any $p\in (1;+\infty)$ there exists a constant $Y_3(p)$ such that for any $V\in \mathcal M$ there holds
$$\Vert u_V\Vert_{W^{2,p}(\O)}\leq Y_3(p).$$ Fixing first $s\in (0;1)$ and then $p=p(s)$ large enough that the embedding $W^{2,p}(\O)\rightharpoonup \mathscr C^{1,s}(\O)$ holds, we obtain the existence of a constant $\mathscr Y(s)$ such that  for any $V\in \mathcal M$ there holds
$$ \Vert u_V\Vert_{\mathscr C^{1,s}(\O)}\leq \mathscr Y(s).$$
\item \underline{When $d=4$:} In that case we apply the embedding $W^{2,2}(\O)\rightharpoonup L^q(\O)$ for any $q\in (2;+\infty).$ This gives, for any $q\in (2;+\infty)$, the existence of $Y_2(q)$ such that, for any $V\in \mathcal M$, 
$$\Vert u_V\Vert_{L^q(\O)}\leq Y_2(q).$$  From the $W^{2,q}$ regularity of the Laplace operator, for any $q\in (2;+\infty)$ there exists a constant $Y_3(q)$ such that for any $V\in \mathcal M$ there holds
$$\Vert u_V\Vert_{W^{2,q}(\O)}\leq Y_3(q).$$Fixing first $s\in (0;1)$ and then $q=q(s)$ large enough that the embedding $W^{2,q}(\O)\rightharpoonup \mathscr C^{1,s}(\O)$ holds, we obtain the existence of a constant $\mathscr Y(s)$ such that  for any $V\in \mathcal M$ there holds
$$ \Vert u_V\Vert_{\mathscr C^{1,s}(\O)}\leq \mathscr Y(s).$$
\item\underline{When $d>4$:} We start an iterative procedure. The uniform $W^{2,2}$ bound \eqref{Eq:W22} gives a $L^{q_1}(\O)$ uniform bound, with $q_1:=\frac{2d}{d-4}.$ This in turn, through the $W^{2,q_1}$ regularity for the Laplace operator, yields a uniform $W^{2,q_1}(\O)$ bound on the family $\{u_V\}_{V\in \mathcal M}$. If $2q_1>d$ (which corresponds to $d<8$) we can then apply the Sobolev embedding $W^{2,q_1}(\O)\rightharpoonup \mathscr C^{0,\alpha}(\O)$ for some $\alpha \in (0;1)$. We then conclude as in the previous cases. Otherwise we obtain a uniform $L^{q_2}(\O)$ bound with $q_2=\frac{d-8}{2d}$ and we then reiterate this procedure for as long as need, obtaining a sequence of uniform $L^{q_j}(\O)$ where $q_j=:\frac{r_j}{2d}$ is defined recursively via 
$$r_{j+1}=r_j-4\,, r_1=d.$$ We consider the first $j+1$ such that $r_{j+1}<0$ and apply the Sobolev embeddings to $q_j$. This gives an $L^\infty$ uniform bound on the eigenfunction, and we can then apply the same reasoning.
 \end{itemize}
 
 \end{proof}
\subsection{Proof of Lemma \ref{Le:LSCeigenvalue}}\label{An:LSC}
\begin{proof}[Proof of Lemma \ref{Le:LSCeigenvalue}]
Assume that the sequence $\{V_k\}_{k\in \N}\in \mathcal M^\N$ converges weakly to $V_\infty\in \mathcal M$. Define, for any $k\in \N$, $u_k$ as the eigenfunction associated with $V_k$ and $\lambda_k$ as the eigenvalue associated with it. From the Rayleigh quotient formulation \eqref{Eq:Rayleigh} of the eigenvalue, we immediately see that there exists a uniform upper bound on the sequence $\{\lambda_k\}_{k\in \N}$. Furthermore, denoting by $\lambda^D(\O)$ the first Dirichlet eigenvalue of $\O$, we also have 
$$\lambda_k\geq \lambda^D(\O)-1.$$ Hence there exists $\lambda_\infty$ such that, up to a subsequence, $\{\lambda_k\}_{k\in \N}$ converges to $\lambda_\infty$. We will show that $\lambda_\infty=\lambda(V_\infty)$, which will guarantee the convergence of the entire sequence.

To do so, we first observe that the sequence $\{u_k\}_{k\in \N}$ is uniformly bounded in $W^{1,2}_0(\O)$. By the Rellich-Kondrachov embedding theorem, there exists $u_\infty\in W^{1,2}_0(\O)$ such that, up to a subsequence, $\{u_k\}_{k\in \N}$ converges to $u_\infty$ weakly in $W^{1,2}_0(\O)$ and strongly in $L^2(\O)$. As a consequence we have 
$$\int_\O u_\infty^2=1\,, u_\infty \geq 0\text{ in } \O.$$ Passing to the limit in the weak formulation of the eigen-equation \eqref{Eq:Eig} proves that 
$$-\Delta u_\infty=\lambda_\infty u_\infty+V_\infty u_\infty$$ or, in other words, that $u_\infty$ is an eigenfunction associated with $V_\infty$. Since $u_\infty$ has constant sign, it follows that $\lambda_\infty$ is the first eigenvalue of $-\Delta -V_\infty$ and that $\lambda_\infty=\lambda(V_\infty)$. We then have, by lower weak semi-continuity of the norm
$$\underset{k\to \infty}{\underline \lim}\lambda(V_k)=\underset{k\to \infty}{\underline \lim}\left(\int_\O |\n u_k|^2-\int_\O V_k u_k^2\right)\geq \int_\O |\n u_\infty|^2-\int_\O V_\infty u_\infty^2=\lambda(V_\infty).$$ This concludes the proof.

\end{proof}
\bibliographystyle{abbrv}
\bibliography{BiblioMRBZ20201}

\begin{thebibliography}{10}

\bibitem{AcerbiFuscoMoreni}
E.~Acerbi, N.~Fusco, and M.~Morini.
\newblock Minimality via second variation for a nonlocal isoperimetric problem.
\newblock {\em Communications in Mathematical Physics}, 322(2):515--557, Sep
  2013.

\bibitem{AftalionTrelat}
A.~Aftalion and E.~Tr{\'{e}}lat.
\newblock How to build a new athletic track to break records.
\newblock {\em Royal Society Open Science}, 7(3):200007, Mar. 2020.

\bibitem{Almgren1989}
F.~J. Almgren and E.~H. Lieb.
\newblock Symmetric decreasing rearrangement is sometimes continuous.
\newblock {\em Journal of the American Mathematical Society}, 2(4):683--683,
  1989.

\bibitem{Alvino1990}
A.~Alvino, G.~Trombetti, and P.-L. Lions.
\newblock Comparison results for elliptic and parabolic equations via schwarz
  symmetrization.
\newblock {\em Annales de l'Institut Henri Poincare (C) Non Linear Analysis},
  7(2):37--65, Mar. 1990.

\bibitem{BHR}
H.~Berestycki, F.~Hamel, and L.~Roques.
\newblock Analysis of the periodically fragmented environment model : I --
  species persistence.
\newblock {\em Journal of Mathematical Biology}, 51(1):75--113, 2005.

\bibitem{Borzi}
A.~Borz{\`{\i}}, E.-J. Park, and M.~V. Lass.
\newblock Multigrid optimization methods for the optimal control of
  convection{\textendash}diffusion problems with bilinear control.
\newblock {\em Journal of Optimization Theory and Applications},
  168(2):510--533, Aug. 2015.

\bibitem{Alabau}
F.~A. Boussouira, P.~Cannarsa, and C.~Urbani.
\newblock Bilinear control of evolution equations of parabolic type.
\newblock {\em arXiv: Optimization and Control}, 2018.

\bibitem{BrascoButtazzo}
L.~Brasco and G.~Buttazzo.
\newblock Improved energy bounds for schr\"{o}dinger operators.
\newblock {\em Calculus of Variations and Partial Differential Equations},
  53(3-4):977--1014, Sept. 2014.

\bibitem{BDPV}
L.~Brasco, G.~De~Philippis, and B.~Velichkov.
\newblock Faber--krahn inequalities in sharp quantitative form.
\newblock {\em Duke Math. J.}, 164(9):1777--1831, 06 2015.

\bibitem{Floridia}
P.~Cannarsa, G.~Floridia, and A.~Y. Khapalov.
\newblock Multiplicative controllability for semilinear
  reaction{\textendash}diffusion equations with finitely many changes of sign.
\newblock {\em Journal de Math{\'{e}}matiques Pures et Appliqu{\'{e}}es},
  108(4):425--458, Oct. 2017.

\bibitem{CantrellCosner1}
R.~S. Cantrell and C.~Cosner.
\newblock Diffusive logistic equations with indefinite weights: Population
  models in disrupted environments {II}.
\newblock {\em {SIAM} Journal on Mathematical Analysis}, 22(4):1043--1064, jul
  1991.

\bibitem{CarlenLieb}
E.~A. Carlen, R.~L. Frank, and E.~H. Lieb.
\newblock Stability estimates for the lowest eigenvalue of a schr\"{o}dinger
  operator.
\newblock {\em Geometric and Functional Analysis}, 24(1):63--84, Feb. 2014.

\bibitem{Carlson1991}
D.~A. Carlson, A.~B. Haurie, and A.~Leizarowitz.
\newblock {\em Infinite Horizon Optimal Control}.
\newblock Springer Berlin Heidelberg, 1991.

\bibitem{Chanillo2000}
S.~Chanillo, D.~Grieser, M.~Imai, K.~Kurata, and I.~Ohnishi.
\newblock Symmetry breaking and other phenomena in the optimization of
  eigenvalues for composite membranes.
\newblock {\em Communications in Mathematical Physics}, 214(2):315--337, Nov.
  2000.

\bibitem{Chanillo2008bis}
S.~Chanillo and C.~Kenig.
\newblock Weak uniqueness and partial regularity for the composite membrane
  problem.
\newblock {\em Journal of the European Mathematical Society}, pages 705--737,
  2008.

\bibitem{Chanillo2008}
S.~Chanillo, C.~E. Kenig, and T.~To.
\newblock Regularity of the minimizers in the composite membrane problem in
  $\mathbb r^2$.
\newblock {\em Journal of Functional Analysis}, 255(9):2299--2320, Nov. 2008.

\bibitem{Christ}
M.~Christ.
\newblock A sharpened riesz-sobolev inequality.
\newblock {\em arXiv: Classical Analysis and ODEs}, 2017.

\bibitem{Cianchi}
A.~Cianchi and A.~Ferone.
\newblock A strengthened version of the hardy-littlewood inequality.
\newblock {\em Journal of the London Mathematical Society}, 77(3):581--592,
  Feb. 2008.

\bibitem{Cox1991}
S.~J. Cox.
\newblock The two phase drum with the deepest bass note.
\newblock {\em Japan Journal of Industrial and Applied Mathematics},
  8(3):345--355, Oct. 1991.

\bibitem{Cox1990}
S.~J. Cox and J.~R. McLaughlin.
\newblock Extremal eigenvalue problems for composite membranes, i.
\newblock {\em Applied Mathematics {\&} Optimization}, 22(1):153--167, July
  1990.

\bibitem{DambrineLamboley}
M.~Dambrine and J.~Lamboley.
\newblock Stability in shape optimization with second variation.
\newblock {\em Journal of Differential Equations}, 267(5):3009--3045, Aug.
  2019.

\bibitem{Dorfman}
R.~Dorfman, P.~Samuelson, and E.~Solow.
\newblock {\em Linear programming and economic analysis}.
\newblock New York, McGraw-Hill, 1958.

\bibitem{borjan}
C.~Esteve, B.~Geshkovski, D.~Pighin, and E.~Zuazua.
\newblock {Large-time asymptotics in deep learning}.
\newblock working paper or preprint, Aug. 2020.

\bibitem{Fister}
K.~FISTER and C.~MCCARTHY.
\newblock Optimal control of a chemotaxis system.
\newblock {\em Quarterly of Applied Mathematics}, 61(2):193--211, 2003.

\bibitem{Lieb}
R.~Frank and E.~Lieb.
\newblock A note on a theorem of m. christ.
\newblock {\em arXiv: Analysis of PDEs}, 2019.

\bibitem{FuscoMaggiPratelli}
N.~Fusco, F.~Maggi, and A.~Pratelli.
\newblock The sharp quantitative isoperimetric inequality.
\newblock {\em Annals of Mathematics}, 168(3):941--980, Nov. 2008.

\bibitem{GuillnGonzlez2020}
F.~Guill{\'{e}}n-Gonz{\'{a}}lez, E.~Mallea-Zepeda, and M.~{\'{A}}.
  Rodr{\'{\i}}guez-Bellido.
\newblock Optimal bilinear control problem related to a chemo-repulsion system
  in 2d domains.
\newblock {\em {ESAIM}: Control, Optimisation and Calculus of Variations},
  26:29, 2020.

\bibitem{HenrotPierre}
A.~Henrot and M.~Pierre.
\newblock {\em Shape Variation and Optimization}.
\newblock European Mathematical Society Publishing House, Feb. 2018.

\bibitem{KaoLouYanagida}
C.-Y. Kao, Y.~Lou, and E.~Yanagida.
\newblock Principal eigenvalue for an elliptic problem with indefinite weight
  on cylindrical domains.
\newblock {\em Math. Biosci. Eng.}, 5(2):315--335, 2008.

\bibitem{Kesavan}
S.~Kesavan.
\newblock {\em Symmetrization and Applications}.
\newblock {WORLD} {SCIENTIFIC}, Apr. 2006.

\bibitem{LamboleyLaurainNadinPrivat}
J.~Lamboley, A.~Laurain, G.~Nadin, and Y.~Privat.
\newblock {Properties of optimizers of the principal eigenvalue with indefinite
  weight and Robin conditions}.
\newblock {\em {Calculus of Variations and Partial Differential Equations}},
  55(6), Dec. 2016.

\bibitem{LanceTrelatZuazua}
G.~Lance, E.~Tr{\'{e}}lat, and E.~Zuazua.
\newblock Shape turnpike for linear parabolic {PDE} models.
\newblock {\em Systems {\&} Control Letters}, 142:104733, Aug. 2020.

\bibitem{Laurain2020}
A.~Laurain.
\newblock Distributed and boundary expressions of first and second order shape
  derivatives in nonsmooth domains.
\newblock {\em Journal de Math{\'{e}}matiques Pures et Appliqu{\'{e}}es},
  134:328--368, Feb. 2020.

\bibitem{MazariQuantitative}
I.~Mazari.
\newblock Quantitative inequality for the eigenvalue of a schr\"{o}dinger
  operator in the ball.
\newblock {\em Journal of Differential Equations}, 269(11):10181--10238, Nov.
  2020.

\bibitem{MazariThese}
I.~Mazari.
\newblock {\em {Shape optimization and spatial heterogeneity in
  reaction-diffusion equations}}.
\newblock Theses, {Sorbonne Universit{\'e}}, July 2020.

\bibitem{PighinSakamoto}
N.~Sakamoto, D.~Pighin, and E.~Zuazua.
\newblock The turnpike property in nonlinear optimal control {\textemdash} a
  geometric approach.
\newblock In {\em 2019 {IEEE} 58th Conference on Decision and Control ({CDC})}.
  {IEEE}, Dec. 2019.

\bibitem{TrelatZhang}
E.~Tr{\'{e}}lat and C.~Zhang.
\newblock Integral and measure-turnpike properties for infinite-dimensional
  optimal control systems.
\newblock {\em Mathematics of Control, Signals, and Systems}, 30(1), Mar. 2018.

\bibitem{TrelatZhangZuazua}
E.~Tr{\'{e}}lat, C.~Zhang, and E.~Zuazua.
\newblock Steady-state and periodic exponential turnpike property for optimal
  control problems in hilbert spaces.
\newblock {\em {SIAM} Journal on Control and Optimization}, 56(2):1222--1252,
  Jan. 2018.

\bibitem{2006}
A.~J. Zaslavski.
\newblock {\em Turnpike Properties in the Calculus of Variations and Optimal
  Control}.
\newblock Springer-Verlag, 2006.

\bibitem{Zuazua2017}
E.~Zuazua.
\newblock Large time control and turnpike properties for wave equations.
\newblock {\em Annual Reviews in Control}, 44:199--210, 2017.

\end{thebibliography}

\end{document}